\newtheorem{theorem}{Theorem}[section]
\newtheorem{proposition}[theorem]{Proposition}
\newtheorem{lemma}[theorem]{Lemma}
\newtheorem{corollary}[theorem]{Corollary}
\theoremstyle{definition}
\theoremstyle{remark}
\newtheorem{remark}[theorem]{Remark}
\newtheorem{example}[]{Example}
\numberwithin{equation}{section}
\newcommand{\dv}{\mathrm{div}}
\newcommand{\mf}{\mathbf}
\newcommand{\mb}{\mathbb}
\newcommand{\mc}{\mathcal}
\newcommand{\mk}{\mathfrak}
\newcommand{\wti}{\widetilde}
\newcommand{\Area}{\mathrm{Area}}
\newcommand{\dist}{\operatorname{dist}}
\newcommand*{\avint}{\mathop{\ooalign{$\int$\cr$-$}}}
\newcommand{\n}{\mathbf n}
\DeclareMathOperator{\Index}{index}
\DeclareMathOperator{\Ric}{Ric}
\title{Min-max minimal hypersurfaces with higher multiplicity}
\date{\today}
\author{Zhichao Wang}
\address{Department of Mathematics, University of British Columbia, Vancouver, BC V6T 1Z2, Canada}
\email{zhichao@math.ubc.ca}
\author{Xin Zhou}
\address{Department of Mathematics, 531 Malott Hall, Cornell University,	Ithaca, NY 14853, and Department of Mathematics, University of California Santa Barbara, Santa Barbara, CA 93106, USA}
\email{xinzhou@cornell.edu}
\begin{document}

\begin{abstract}
We exhibit the first set of examples of non-bumpy metrics on the $(n+1)$-sphere ($2\leq n\leq 6$) in which the varifold associated with the two-parameter min-max construction must be a multiplicity-two minimal $n$-sphere. This is proved by a new area-and-separation estimate for certain minimal hypersurfaces with Morse index two inspired by an early work of Colding-Minicozzi. We also construct non-bumpy projective spaces in which the first min-max hypersurfaces are one-sided, and non-bumpy balls in which the free boundary min-max hypersurfaces are improper.
\end{abstract}

\maketitle

\section{Introduction}

In the past decade, we have witnessed many important advancement in the development for minimal hypersurfaces, including the solution of Yau conjecture on minimal surfaces by Marques-Neves \cite{MN17}, and Song \cite{Song18}, and the establishment of a Morse theory for the area functional by Marques-Neves \citelist{\cite{MN16}\cite{MN18}}. One key challenge in these works is the a priori existence of integer multiplicity of the varifolds produced by the Almgren-Pitts min-max theory \citelist{\cite{Alm62}\cite{Alm65}\cite{Pi}} (see also \cite{Colding-DeLellis03, DeTa}). Now we have very well understanding of the multiplicity when the ambient manifold has a bumpy metric (\cite{Whi91}) thanks to the solution of the Multiplicity One Conjecture \cite{Zhou19}; (see also \cite{CM20}). For non-bumpy metrics, there are known trivial examples where some min-max varifolds have higher multiplicities, while some others have multiplicity one. For instance, on a thin and long flat torus, the min-max varifold can be two identical copies of the cross section, but one can move one copy parallelly away to obtain a multiplicity one varifold of the same mass. This made it tempting to conjecture that for any metric there always exists a min-max varifold of multiplicity one. However, in this paper, we will disprove this conjecture by constructing the first set of nontrivial and non-bumpy examples, where the varifold associated with the two-parameter min-max construction must have multiplicity two.

 \begin{theorem}\label{thm:main}
 	The $(n+1)$-sphere $S^{n+1}$ of dimension $3\leq (n+1)\leq 7$ admits metrics $g$ with non-negative Ricci curvature so that the second volume spectrum $\omega_2(S^{n+1},g)$ can only be achieved by a degenerate minimal $n$-sphere with multiplicity 2.	
 \end{theorem}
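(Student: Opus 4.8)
The plan is to exhibit the metrics as rotationally symmetric warped products, to single out a degenerate minimal $S^n$ inside them, and then to reduce the theorem to a classification of minimal hypersurfaces of area at most twice that of this sphere --- the delicate point being an area lower bound for configurations resembling the doubled sphere with a thin neck. Concretely, I would take $g=dt^2+\phi(t)^2\,g_{S^n}$ on $S^{n+1}$, $t\in[0,L]$, with $\phi$ concave, $\phi(0)=\phi(L)=0$, $|\phi'|\le 1$, $\phi'(0)=-\phi'(L)=1$, and a unique maximum at $t_*$ at which $\phi(t_*)=:M$ and $\phi''(t_*)=0$ (for instance $\phi=M-c(t-t_*)^4$ near $t_*$), while $\phi''<0$ strictly away from $t_*$. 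Concavity together with $|\phi'|\le 1$ forces $\Ric_g\ge 0$. The slice $\Sigma:=\{t=t_*\}\cong S^n$ is then minimal with $\Area(\Sigma)=:A$; its second fundamental form vanishes and $\Ric(\partial_t,\partial_t)|_\Sigma=-n\phi''(t_*)/M=0$, so its Jacobi operator is exactly $\Delta_\Sigma$ --- hence $\Sigma$ is \emph{stable}, with one-dimensional nullity spanned by the constants, so it is degenerate and $g$ is non-bumpy, while the fourth-order flatness of $\phi$ at $t_*$ keeps $\Sigma$ isolated among nearby single-sheeted minimal hypersurfaces.

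The key reduction I would aim for is that the only stationary integral $n$-varifolds in $(S^{n+1},g)$ supported on minimal hypersurfaces and of mass $\le 2A$ are $|\Sigma|$ and $2|\Sigma|$. The easy ingredients are the monotonicity formula (no closed minimal hypersurface of very small area) and the strictly concave caps $[0,t_*-\delta]\cup[t_*+\delta,L]$, which form a mean-convex barrier foliation confining every minimal hypersurface to $\{|t-t_*|<\delta\}$, where it becomes a (multi-)graph over $\Sigma$. A single-sheeted graph must be $\Sigma$ itself, by a Lyapunov--Schmidt reduction at the degenerate critical point $\Sigma$ of the area functional whose reduced equation --- governed by the constant Jacobi field together with the fourth-order coefficient of $\phi$ --- has only the trivial solution. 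This also gives $\omega_1=A$: the slice sweepout shows ``$\le$'', and the classification shows ``$\ge$'', since the $\omega_1$-min-max varifold is a minimal hypersurface of positive mass $\le A$, hence $|\Sigma|$; and $\Sigma$ becomes the unique area-$A$ minimal hypersurface.

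The heart of the argument, and the step I expect to be the main obstacle, is the multi-sheeted case: an \emph{area-and-separation estimate}, in the spirit of Colding--Minicozzi, asserting that any minimal hypersurface near $\Sigma$ with two or more sheets has area $\ge 2A$, with equality only for the varifold $2|\Sigma|$ (which has no smooth embedded realization). Writing two adjacent sheets as graphs $u_1\le u_2$ over a subdomain of $\Sigma$ joined by necks and setting $w=u_2-u_1\ge 0$, on the region where both sheets persist $w>0$ satisfies a Jacobi-type differential inequality whose principal part is $\Delta_\Sigma$ --- non-negative, crucially because $\Ric_g\ge 0$ --- which yields a separation estimate bounding $|\nabla\log w|$ and forbidding the sheets to pinch off without a definite area cost. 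Feeding this into a second-order expansion of $\Area$ about $2\Sigma$, together with the stability of $\Sigma$ (so the two sheets already contribute $\ge 2A$) and the fact that the catenoidal necks only add area in a $\Ric_g\ge 0$ ambient, gives $\Area\ge 2A$. Equivalently, this is exactly the statement that the Ketover--Marques--Neves catenoid-neck construction cannot lower the two-parameter width below $2A$ for these metrics, which is why a degenerate $S^n$ of multiplicity two is forced.

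Finally I would pin down $\omega_2$. For the upper bound, the degree-two polynomial sweepout $\mathbb{RP}^2\ni[a_0:a_1:a_2]\mapsto\partial\{a_0+a_1t+a_2t^2<0\}$ is a $2$-sweepout whose supremal area equals $2A$ (attained in the limit as two slices collapse onto $\Sigma$), so $\omega_2\le 2A$. For the lower bound, if $\omega_2<2A$ then Almgren--Pitts min-max along an optimal sequence of $2$-sweepouts, together with the Morse-index upper bound for min-max varifolds, produces a stationary integral varifold $V$ with $\|V\|=\omega_2\in[\omega_1,2A)$ and $\Index(\spt V)\le 2$; by the classification $\spt V=\Sigma$ and $\|V\|=mA$, so $m=1$ and $\omega_2=\omega_1=A$, and then the Lusternik--Schnirelmann inequality would force a positive-dimensional family of area-$A$ minimal hypersurfaces, contradicting uniqueness of $\Sigma$. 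Hence $\omega_2=2A$; and any varifold realizing $\omega_2$ arises as such a min-max limit, so it has mass $2A$ and index $\le 2$ and must therefore be $2|\Sigma|$ --- a degenerate minimal $n$-sphere of multiplicity two. (The range $2\le n\le 6$ is precisely what the regularity and compactness theory for min-max minimal hypersurfaces requires.)
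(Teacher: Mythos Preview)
Your outline correctly singles out the area-and-separation estimate as the crux, and the overall architecture matches the paper's. But two of your reductions contain genuine gaps.

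First, the confinement claim is false. In a warped product $dt^2+\phi(t)^2 g_{S^n}$ with concave $\phi$, the mean curvature vector of each slice $\{t=c\}$ points \emph{away} from the maximum $t_*$, not toward it; the foliation therefore yields only the Frankel property $\min_\Gamma t\le t_*\le\max_\Gamma t$, never confinement to $\{|t-t_*|<\delta\}$. Concretely, your metric has totally geodesic ``vertical'' hypersurfaces $\{(t,\omega):\omega\in S^{n-1}\}$ for every great $(n-1)$-sphere $S^{n-1}\subset S^n$, and these run through the whole interval. The paper does \emph{not} work with a fixed metric: it takes a \emph{sequence} $M_k$ degenerating to $S_0^n\times\mathbb{R}$, and uses Sharp's compactness to force any hypothetical counterexample $\Sigma_k$ (index $\le 2$, $\Area(\Sigma_k)\le 2A$, $\Sigma_k\ne S_0^n$) to converge as varifolds to $2S_0^n$. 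It is this compactness step---not a barrier---that places $\Sigma_k$ into the two-sheets-plus-neck configuration for large $k$; without it you have no reduction to the local picture near $\Sigma$.

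Second, your sketch ``stability gives sheets $\ge 2A$; necks only add area'' is not the right mechanism and would not close. The sheets are graphs over $\Sigma$ \emph{minus disks}; after capping each with an area-minimizing $n$-disk to form closed $\Gamma_1,\Gamma_2$ homologous to $\Sigma$, these are not minimal and can have $\Area(\Gamma_j)<A$, since $\Sigma$ is only \emph{degenerate} stable. The paper's comparison is quantitative: the one-sided minimizing property of the slices $S_t$ together with $\Area(S_t)\ge A(1-|t|^{2n})$ gives $\Area(\Gamma_1)+\Area(\Gamma_2)\ge 2A-C\,\mathfrak d_k^{2n}$, where $\mathfrak d_k$ is the Hausdorff distance from $\Sigma_k$ to $S_0^n$; meanwhile the catenoidal neck of radius $r_k$ contributes at least $c(n)r_k^n$ beyond the two disks. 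One therefore needs $\mathfrak d_k^{2n}\ll r_k^n$, and the heart of the paper (its Section~4) is a Colding--Minicozzi-type propagation of the sheet separation $w_k$ outward from the neck, via monotonicity of weighted spherical averages of $w_k$, to prove $\mathfrak d_k\le Cr_k|\log r_k|$. Your proposal alludes to a Harnack-type bound on $|\nabla\log w|$ but never formulates this quantitative output; and because $w_k$ is a graph of one moving sheet over the other (not over the fixed $\Sigma$), the governing inequality is genuinely nonlinear, which is what makes the estimate delicate. Note also that the exponent $2n$ in $\Area(S_t)$ is why the paper's defining equation carries $x_{n+2}^{2n}$: your fourth-order choice $\phi=M-c(t-t_*)^4$ gives only $\Area(S_t)\approx A(1-C(t-t_*)^4)$, and then $\mathfrak d_k^{4}\ll r_k^{n}$ fails for $n\ge 4$, so your metric would not work in dimensions $5\le n+1\le 7$ even if the rest of the argument were in place.
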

Here $\{\omega_k\}$ is the {\em volume spectrum of $M$} defined by Gromov \cite{Gro88}, Guth \cite{Guth09}, and Marques-Neves \cite{LMN16} as a sequence of non-decreasing positive numbers 
\[ 0<\omega_1(M, g)\leq\omega_2(M, g)\leq\cdots \leq\omega_k(M, g) \to \infty, \]
depending only on $M$ and $g$; see also %\cite{Gro88}\cite{Guth09}\cite{LMN16}
\cite{Lio16}\cite{GL17}\cite{Sab17}\cite{Wang21} for further studies. 
  
  \medskip

%\textcolor{red}{
%Determining the topology and index of solutions to min-max theory has always been an interesting topic, which will be helpful in understanding the distribution of minimal hypersurfaces. For 3-manifolds with positive Ricci curvature, Marques-Neves \cite{MN12} bounded the area, index, multiplicity and topology of the minimal surfaces that realizes the first width. Such a result was later generalized by the second author \citelist{\cite{Zhou15}\cite{Zhou17}} for higher dimensional Riemannian manifolds with positive Ricci curvature. Together with the catenoid estimates by Ketover-Marques-Neves \cite{KMN16}, the first width is achieved by an orientable index 1 minimal hypersurface with multiplicity 1. Moreover, Haslhofer-Ketover \cite{HK19} used the catenoid estimates to construct a non-canonical minimal 2-sphere associated with the second width of $S^3$ with positive Ricci curvature or bumpy metrics (see \cite{KLS19} for a more general discussion). Our theorem shows that the assumption of bumpiness or positive Ricci curvature in \cite{HK19} is crucial by constructing examples that min-max theory can not produce new minimal spheres related to the second width. }

Our proof relies on a new area comparison argument. Roughly speaking, we show that if a connected multiplicity-one minimal hypersurface $\Sigma$ is sufficiently close to a multiplicity-two degenerate stable hypersurface $S^n_0$, then the area of $\Sigma$ is strictly greater than that of $S_0^n$. Similar area comparison arguments have played essential roles in the study of Morse index and multiplicity of min-max minimal hypersurfaces (see \cite{MN12, Zhou15, Zhou17, KMN16}), but in a reserve way. In particular, the catenoid estimates by Ketover-Marques-Neves \cite{KMN16} imply that if $S^n_0$ is unstable, then by pushing away the two copies of $S^n_0$ and then adding a catenoid type neck therein, one can strictly decrease the area. More precisely, the area-decrease by pushing away the two copies of $S^n_0$ dominates the area-increase by adding the catenoid neck, when $S^n_0$ is unstable. This idea has been further extended by Haslhofer-Ketover \cite{HK19} (see also \cite{Dea}) to construct the second minimal 2-sphere in $S^3$ with a bumpy metric associated with the second width. %(see \cite{KLS19} for a more general discussion). 
%In \cite{KMN16, HK19}, the area-decrease by pushing away the two copies of $S_0$ dominates the area-increase by adding the catenoid neck, when $S_0$ is strictly unstable. 
In our situation, we will show the reverse phenomenon when $S^n_0$ is degenerate stable.  A crucial ingredient is to bound the area difference between $\Sigma$ and $2S^n_0$ away from the neck region. 
%Unlike in the catenoid estimates, where the distance between the pushed-away copies with $S_0$ is given by the size of the catenoid by the construction, we need to prove a new distance-separation estimates between $\Sigma$ and $S_0$ in terms of the size of the neck region.  
To do so, we need a new distance-separation estimate between $\Sigma$ and $S^n_0$ in terms of the size of the neck region; see Theorem \ref{thm:hausdorff distance} and \ref{thm:hausdorff distance:high}. 
This part is inspired by an early work of Colding-Minicozzi \cite{CM02}; see the part on idea of proof for more details.
%Lastly, our results show that the assumption of bumpiness is crucial to produce new minimal hypersurfaces using the second width. 

\medskip
%Notice that 
The Multiplicity One Theorem also implies that for bumpy metrics, the min-max minimal hypersurfaces can only be two-sided. However, our method can also be used to construct non-bumpy metrics on the projective spaces in which the first width must be achieved by one-sided hypersurfaces. 
%prove the existence of one-sided solutions to min-max theory.
\begin{corollary}\label{cor:one-sided}
	The $(n+1)$-projective space $\mb{RP}^{n+1}$ with $3\leq (n+1)\leq 7$ admits metrics $g$ so that the first volume spectrum $\omega_1(\mb {RP}^{n+1},g)$ can only be achieved by a one-sided $\mb {RP}^n$ with multiplicity 2.
\end{corollary}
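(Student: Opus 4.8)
The plan is to construct the metric on $\mb{RP}^{n+1}$ as a quotient of the metric produced on $S^{n+1}$ in Theorem \ref{thm:main}. First I would revisit the construction behind Theorem \ref{thm:main}: there one builds a metric $g$ on $S^{n+1}$ which is rotationally symmetric (invariant under the relevant $SO(n+1)$ or $O(n+1)$ action) and contains a degenerate stable minimal $n$-sphere $S^n_0$ sitting as an equatorial slice, with two symmetric ``caps'' on either side. The key point is that this construction can be arranged to be invariant under the antipodal map $\tau$ of $S^{n+1}$: one takes the profile/warping function defining $g$ to be even about the equator, and checks that $\tau$ preserves $g$ and acts freely. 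Then $g$ descends to a metric $\bar g$ on $\mb{RP}^{n+1} = S^{n+1}/\tau$, and the equator $S^n_0$ descends to a one-sided $\mb{RP}^n \subset \mb{RP}^{n+1}$, since $\tau$ restricted to $S^n_0$ is again the antipodal map and $\tau$ swaps the two sides of $S^n_0$. The double cover of this $\mb{RP}^n$ inside $\mb{RP}^{n+1}$ is exactly $S^n_0$, so a multiplicity-one $S^n_0$ upstairs corresponds to a multiplicity-two (in the varifold sense, or as the one-sided sheet counted with its natural double weight) $\mb{RP}^n$ downstairs.

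Next I would relate the volume spectra. One has the standard fact (Lusternik-Schnirelmann type, used e.g. in \cite{LMN16}) that pulling back sweepouts under the double cover $\pi\colon S^{n+1}\to \mb{RP}^{n+1}$ compares widths: a $1$-sweepout of $\mb{RP}^{n+1}$ lifts to a sweepout of $S^{n+1}$ detecting $\omega_1$ or $\omega_2$, and conversely, so that $\omega_1(\mb{RP}^{n+1},\bar g)$ and $\omega_2(S^{n+1},g)$ are comparable — concretely, the $\tau$-equivariant min-max value for $\omega_1$ downstairs equals the min-max value for $\omega_2$ upstairs computed among $\tau$-equivariant sweepouts, and by the equivariant min-max theory of Ketover (and the arguments in the main body of this paper for the two-parameter family) this common value is $2\,\Area(S^n_0,g) = 2\cdot 2\,\Area(\mb{RP}^n,\bar g)$. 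The heart of the matter is then to argue that the area-and-separation estimate proved for Theorem \ref{thm:main} (Theorems \ref{thm:hausdorff distance}, \ref{thm:hausdorff distance:high}) still applies: any connected minimal hypersurface realizing $\omega_1(\mb{RP}^{n+1},\bar g)$, if it is not the one-sided $\mb{RP}^n$ with multiplicity two, lifts to (or has connected double cover which is) a minimal hypersurface in $S^{n+1}$ near $2S^n_0$ that is embedded and multiplicity-one, and the estimate then forces its area to strictly exceed $2\,\Area(S^n_0)$, contradicting minimality of the min-max value. Hence the only possibility is the one-sided $\mb{RP}^n$.

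The main obstacle I anticipate is the bookkeeping at the level of sweepouts and the index/multiplicity structure on the quotient: one must be careful that the two-parameter family used to detect $\omega_2$ upstairs descends to a genuine one-parameter family detecting $\omega_1$ downstairs (the topology of the space of $\tau$-equivariant cycles, and the cohomological nontriviality of the sweepout class), and that a multiplicity-one two-sided competitor downstairs really does lift to an embedded multiplicity-one competitor upstairs to which the rigidity estimate applies — one-sided sheets and the possibility that the downstairs hypersurface is non-orientable need to be handled, using that near $S^n_0$ the tubular neighbourhood is a product and small two-sided graphs downstairs lift to $\tau$-invariant pairs of graphs upstairs. A secondary technical point is verifying that the antipodal symmetry can be imposed on the metric $g$ of Theorem \ref{thm:main} without destroying the non-negativity of $\Ric$ or the degenerate-stability of $S^n_0$; but since the construction there is already symmetric about the equator, this should be a matter of checking that the symmetrization is compatible, rather than a new idea.
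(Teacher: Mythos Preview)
Your quotient construction is exactly what the paper does, and your instinct to feed the downstairs competitor into the area-and-separation estimate is correct. But the route you take through sweepout lifting and equivariant min-max is both unnecessary and where your real obstacles live; the paper bypasses all of it.

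The paper never relates $\omega_1(N_k)$ to $\omega_2(M_k)$. Instead it works entirely in the quotient $N_k=M_k/\{x\sim -x\}$: since $N_k$ inherits the Frankel property, Marques--Neves gives directly that $\omega_1(N_k)$ is achieved by a connected minimal $\Gamma_k$ with $\Index(\Gamma_k)\le 1$. One computes $\omega_1(N_k)\to 2\Area(\mb{RP}^n_0)$ from the minimal foliation of the limit space $S_0^n\times\mb R/\{x\sim-x\}$, and compactness forces $\Gamma_k\to\mb{RP}^n_0$ with multiplicity $m_k\le 2$. If $m_k=1$, then $\Gamma_k$ converges to $2\mb{RP}^n_0$ with at most \emph{one} neck point (since index $\le 1$), and the rigidity argument of Theorem~\ref{thm:rigidity} runs verbatim \emph{inside} $N_k$ to give $\Area(\Gamma_k)>2\Area(\mb{RP}^n_0)$, a contradiction. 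If $m_k=2$, then $\Gamma_k\to\mb{RP}^n_0$ with multiplicity one, hence smoothly by Allard; only here does one lift, and the lift $\wti\Gamma_k$ is smoothly close to $S^n_0$, so $\wti\Gamma_k=S^n_0$ and $\Gamma_k=\mb{RP}^n_0$.

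What this buys over your outline: no cohomological bookkeeping for lifted sweepouts, no appeal to equivariant min-max, and no need to control index or connectedness of a lift in the unstable case. Your lifting step ``a multiplicity-one two-sided competitor downstairs lifts to an embedded multiplicity-one competitor upstairs'' is exactly where things get delicate (the lift could be disconnected, and you would still need an index bound $\le 2$ upstairs from index $\le 1$ downstairs), and the paper simply sidesteps it by running the catenoid/area comparison directly in the quotient. Your symmetry worry is a non-issue: the explicit hypersurfaces $x_1^2+\cdots+x_{n+1}^2+x_{n+2}^{2n}/a^{2n}=1$ are manifestly antipodally invariant.
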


\medskip
%{\color{blue}In \cite{LZ16}, M. Li and the second author developed min-max theory in compact manifolds with boundary, which yields the existence of free boundary minimal hypersurfaces. Note that their solutions may {\em not be properly embedded}, i.e. have interior touching the boundary, which causes the major difficulties in their regularity theorem. Then they conjectured their results concerning properness is optimal even for Euclidean domains. Here we confirm their conjecture for general manifolds.}
In \cite{LZ16}, Li-Zhou developed a free boundary min-max theory for compact manifolds with boundary $(M, \partial M)$. The minimal hypersurface $\Sigma$ constructed therein may {\em not be properly embedded}, i.e. it may have interior touching with the boundary $\operatorname{int}(\Sigma)\cap \partial M \neq \emptyset$. The touching phenomenon had caused major challenges in the application of this theory, e.g. \cite{GLWZ19, Wang_21_2, SWZ}. It has been conjectured that the touching could happen even for Euclidean domains \cite[Conjecture 1.5]{LZ16}. Here we exhibit examples where the touching phenomenon does happen.

%Then they conjectured their results concerning properness is optimal even for Euclidean domains. Here we confirm their conjecture for general manifolds.

\begin{corollary}
$B^{n+1}$ admits a metric with minimal boundary and non-negative Ricci curvature so that its first volume spectrum can only be achieved by its boundary with multiplicity one.	
	\end{corollary}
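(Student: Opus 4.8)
The plan is to build $B^{n+1}$ as one half of the sphere $(S^{n+1},g)$ furnished by Theorem~\ref{thm:main}. The metric $g$ can be arranged to be invariant under the reflection $\tau$ whose fixed-point set is the degenerate minimal $n$-sphere $S^n_0$; since the fixed-point set of an isometric involution is totally geodesic, cutting $S^{n+1}$ open along $S^n_0$ yields two isometric copies of a ball $B$ with $\partial B=S^n_0$ a minimal (in fact totally geodesic) boundary, and the condition $\Ric_g\ge 0$ is inherited. Set $A_0=\mathcal H^n_g(S^n_0)$, so that Theorem~\ref{thm:main} gives $\omega_2(S^{n+1},g)=2A_0$. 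I claim $\omega_1(B,g)=A_0$ and that the Li--Zhou free boundary min-max \cite{LZ16} can only produce $\partial B$ with multiplicity one. Since $\partial B$ has all of its interior lying on $\partial B$, this is the most degenerate possible failure of proper embeddedness, so the corollary follows.

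For the upper bound, the family of level sets of the warping variable restricted to $B$ — that is $\{S^n\times\{s\}\}_{s\le 0}$, degenerating to a point as $s\to -1$ and to $\partial B$ as $s\to 0$ — is a free boundary $1$-sweepout of $(B,\partial B)$ detecting the generator of $H_{n+1}(B,\partial B;\mathbb Z_2)$, with no concentration of mass, and every slice has area at most $\max_s\mathcal H^n_g(S^n\times\{s\})=A_0$, attained where the warping function is largest. Hence $\omega_1(B,g)\le A_0$. By \cite{LZ16}, together with the one-parameter index bound in the free boundary setting, the width $\omega_1(B,g)$ is attained by an almost properly embedded free boundary minimal hypersurface $\Sigma$ with $\mathcal H^n_g(\Sigma)=\omega_1(B,g)\le A_0$ and Morse index at most one.

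It remains to show $\Sigma=\partial B$ with multiplicity one; this will also give $\omega_1(B,g)=A_0$. Using the long, thin neck region built into the metric of Theorem~\ref{thm:main}, along which $S^n_0$ lies and outside of which there is no minimal hypersurface of area at most $A_0$ — anything meeting the caps, or running lengthwise along the neck, has area far larger than $A_0$ — one first confines $\Sigma$ to a fixed small tubular neighbourhood of $S^n_0$. Reflecting $\Sigma$ across $S^n_0$ then produces a $\tau$-symmetric minimal hypersurface $\widehat\Sigma$ in $(S^{n+1},g)$ that lies near $S^n_0$, has $\mathcal H^n_g(\widehat\Sigma)=2\mathcal H^n_g(\Sigma)\le 2A_0$ and Morse index at most two. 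This is exactly the regime of the area-and-separation estimates of Theorems~\ref{thm:hausdorff distance} and~\ref{thm:hausdorff distance:high}, which force $\widehat\Sigma=2S^n_0$; since $\Sigma$ is supported in $B$ and $\tau$ fixes $S^n_0$ pointwise, $\Sigma=\partial B$ with multiplicity one. (Corollary~\ref{cor:one-sided} is proved the same way, replacing the reflection $\tau$ by the antipodal quotient $S^{n+1}\to\mathbb{RP}^{n+1}$.)

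The step I expect to be the main obstacle is this last localization-and-reflection argument. One must make the doubling of the merely almost-properly-embedded hypersurface $\Sigma$ rigorous across its interior touching set with $\partial B$ — or, equivalently, extend the area-and-separation estimate of Theorem~\ref{thm:hausdorff distance} to the free boundary setting so that it can be applied to $\Sigma$ relative to $\partial B$ directly — and, in tandem, rule out every competitor of area $A_0$ other than $\partial B$: multiplicity $\ge 2$ copies of $S^n_0$ have area $\ge 2A_0>A_0$, while connected multiplicity-one minimal hypersurfaces near $S^n_0$ have strictly larger area by the estimate, but this uses that the construction underlying Theorem~\ref{thm:main} really does leave no minimal hypersurface of area at most $A_0$ in $B$ other than $\partial B$.
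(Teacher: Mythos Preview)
Your proposal is correct and matches the paper's own proof: take one half $M_+$ of the sphere from Theorem~\ref{thm:main}, run free-boundary min-max to produce $\Sigma$ with index $\le 1$ and area $\le A_0$, reflect across the totally geodesic $S^n_0$ to obtain a closed minimal $\widehat\Sigma\subset(S^{n+1},g)$ with index $\le 2$ and area $\le 2A_0$, and invoke rigidity. A few clarifications. The paper cites \cite{SWZ} (rather than \cite{LZ16}) for the min-max step, which supplies multiplicity one and the index bound directly in this positive-Ricci setting. The conclusion $\widehat\Sigma=S^n_0$ should be drawn from the rigidity Theorem~\ref{thm:rigidity}: Theorems~\ref{thm:hausdorff distance} and~\ref{thm:hausdorff distance:high} by themselves only bound the separation, and one still needs the catenoid-versus-disks area comparison packaged there; your localization step is likewise unnecessary, since Theorem~\ref{thm:rigidity} applies to any closed minimal hypersurface in $M_k$ with index $\le 2$ and area $\le 2A_0$ without a priori confinement. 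Finally, the obstacle you flag in the doubling is not present: because $S^n_0$ is totally geodesic and $\Sigma$ is minimal, any interior tangential contact of $\Sigma$ with $\partial B$ forces $\Sigma=\partial B$ by the strong maximum principle, so either you are already done or $\Sigma$ is properly embedded with free boundary and reflects smoothly.
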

\begin{proof}
Let $(S^{n+1},g)$ be as in Theorem \ref{thm:main}. Then the round $S_0^n$ divides the $(n+1)$-sphere into two connected components, denoted by $M_+$ and $M_-$; see Section \ref{sec:main proof} for explicit description. 
Note that $(M_+,g)$ is an $(n+1)$-ball that has positive Ricci curvature away from its boundary. By \cite{SWZ},  $\omega_1(M_+,g)$ is realized by a free boundary minimal hypersurface $\Sigma$ with multiplicity one, whose index is bounded by one from above. Then by reflection along $S_0^n$, one can obtain a smoothly embedded minimal hypersurface $\wti \Sigma\subset (S^{n+1},g)$ with index less than or equal to 2. Clearly, $\omega_1(M_+,g)\leq \Area(S_0^n)$. It follows that 
\[  \Area(\wti \Sigma)\leq 2\Area(S_0^n).   \]
Then by Theorem \ref{thm:main}, $\wti \Sigma$ can only be the $S_0^n$ with multiplicity two, which implies that $\Sigma$ is the boundary of $M_+$.
	\end{proof}

\subsection*{Idea of the proof}
%\textcolor{red}{(Write this part only for $n=2$?)} 
We construct a sequence of Riemannian $(n+1)$-spheres $M_k$ (isometrically embedded in $\mb R^{n+2}$) that converges locally smoothly to $S_0^n\times \mb R$ (see Section \ref{sec:main proof} for details), where $S_0^{n}$ is a round $n$-sphere in $\mb R^{n+1}$ and embedded in $M_k$ as the unique degenerate stable minimal hypersurface for each $k$. Moreover, for each $k$, the level sets (denoted by $\{S_t\}$) of the distance function to $S_0^n$ have area lower bound $\Omega_n(1-|t|^{n+1})$, where $\Omega_n$ is the volume of the unit $n$-sphere.

Suppose on the contrary that each $\omega_2(M_k)$ is realized by a minimal hypersurface $\Sigma_k$ %(possibly with multiplicities) 
that is not $S_0^n$. Then by our construction, $\Sigma_k$ has to be connected, unstable, of index less than or equal to 2 (by \cite{MN16}) and multiplicity one (\cite{Zhou19}). Then the compactness \cite{Sharp17} gives that $\Sigma_k$ converges locally smoothly to an embedded minimal hypersurface with integer multiplicity in $S_0^n\times \mb R$. By the lower bounds for $\omega_2(M)$, the limit of $\Sigma_k$ is exactly $S_0^n$ with multiplicity 2. Moreover, by the classification of embedded minimal hypersurfaces with two ends (\cite{Sch83_1}), after suitable scaling, the blowup around each singular point is a standard catenoid. Let $r_k$ denote the ``radius'' of link of the small catenoid in $\Sigma_k$, i.e. the distance of the small catenoid to the center point. 
First, the Hausdorff distance between $\Sigma_k$ and $S_0^n$ is bounded from above by $10r_k|\log r_k|$ (to be discussed next). 
%\textcolor{red}{By replacing the annuli in neck regions of $\Sigma_k$ by two minimizing $n$-disks, we obtain a new hypersurface, denoted by $\Gamma_k$. Then using the one-sided minimizing property of $\{S_t\}$, we show that $|\Gamma_k|-2|S_0|$ is $\mc O(r^{n+1}_k|\log r_k|^{n+1})$; }
After replacing the annuli in neck regions of $\Sigma_k$ by two minimizing $n$-disks, by using the one-sided minimizing property of $\{S_t\}$, the area of the new hypersurface is at least $2|S^n_0| - \mc O(r^{n+1}_k|\log r_k|^{n+1})$; 
%the new hypersurface has area difference $\mc O(r^{n+1}_k|\log r_k|^{n+1})$ with  $2S_0^n$; 
see \eqref{eq:bound new surface}. On the other hand, the neck regions of $\Sigma_k$ contribute at least $c(n)r_k^n$ amount of area more than that of the $n$-disks; see \eqref{eq:scaling difference}.  
Therefore, $|\Sigma_k| - 2 |S^n_0| \geq c(n)r_k^n - \mc O(r^{n+1}_k|\log r_k|^{n+1})>0$, 
%Therefore $|\Sigma_k|> 2 |S_0|$ (noting $r_k|\log r_k|^{n+1}\to 0$), 
contradicting $\omega_2(M_k)\leq 2\omega_1(M_1)\leq 2\Area(S_0^n)$.

\begin{comment}
\textcolor{blue}{First, the Hausdorff distance between $\Sigma_k$ and $S_0^n$ is bounded from above by $10r_k|\log r_k|$ (to be discussed next). Then using the one-sided minimizing property of $\{S_t\}$, we can prove that the area of $\Sigma_k$ is at least $2|S_0|- C(n)(r_k|\log r_k|)^{n+1}$ away from the neck regions; see \eqref{eq:bound new surface}. On the other hand, the neck regions of $\Sigma_k$ contribute at lease $C'(n)r_k^n$ amount of area. Therefore $|\Sigma_k|> 2 |S_0|$ (noting $r_k|\log r_k|^{n+1}\to 0$), contradicting with $\omega_2(M_k)\leq 2\omega_1(M_1)\leq 2\Area(S_0^n)$.}
%Then by technical analysis, 
%\textcolor{red}{The by developing several new monotonicity formulas, we are able to prove that Hausdorff distance between $\Sigma_k$ and $S_0^n$ is bounded by $10r_k|\log r_k|$ from above. Observe that $r_k|\log r_k|^{n+1}\to 0$. Then using the one-sided minimizing property of $\Sigma_t$, together with the catenoid structure, we conclude that $\Sigma_k$ has area strictly greater than twice of $S_0^n$. This leads to a contradiction with $\omega_2(M_k)\leq 2\omega_1(M_1)\leq 2\Area(S_0^n)$.}
\end{comment}

To bound the Hausdorff distance $d_H(\Sigma_k, S_0^n)$, the key challenge arises from intermediate annuli regions $A(y_k, R_k, \epsilon;M_k)$, where $y_k$ is a singular point, $R_k/r_k\nearrow \infty$, $R_k\searrow 0$. In fact, $\Sigma_k$ is close in smooth topology to the catenoid of radius $r_k$ inside $B(y_k, R_k;M_k)$ and to $2 S^n_0$ outside $B(y_k, \epsilon;M_k)$ by smooth convergence. However, $\Sigma_k$ may not be a 2-sheeted graph over $S^n_0$ inside $A(y_k, R_k, \epsilon;M_k)$, and we are forced to write one component $\Sigma_k^2$ (of $\Sigma_k$) as a graph over the other $\Sigma_k^1$. First, $d_H(\Sigma_k^1, \Sigma_k^2)$ when restricted to $\partial B(y_k, R_k;M_k)$ is at most $3r_k\log\frac{R_k}{r_k}$; see \eqref{eq:Ik Rk}. We need to prove that this bound keeps roughly at the order $r_k\log\frac{s}{r_k}$ on $\partial B(y_k, s;M_k)$ when $s\nearrow \epsilon$. Our proof relies on several new monotonicity formulas inspired by Colding-Minicozzi \cite{CM02}. In particular, we study carefully the evolution of the averages of the height function $w_k$ (between $\Sigma_k^1, \Sigma_k^2$): $s\mapsto\avint_{\Sigma_k^1\cap\partial B(y_k, s;M_k)}w_k$; see Proposition \ref{prop:bound wk} and compare with \cite[Lemma 2.1]{CM02}. The main new challenge as compared with \cite{CM02} is that $w_k$ is not a graph function over a fixed hypersurface, and this causes many higher-order terms in our monotonicity formulas, particularly as the height function $w_k$ only satisfies a highly nonlinear PD-inequality \eqref{eq:upper bound for laplace wk}. Note that some extra care is needed when the two singular points are not too far from each other; see Proposition \ref{prop:bound wk both}.

\subsection*{Outline}
In Section \ref{sec:blowup}, we will classify the limit cones of the singular set arising from the compactness of minimal hypersurfaces. In Section \ref{sec:main proof}, we describe the concrete constructions and prove our main results by assuming the key Hausdorff distance upper bound estimates, which 
%. Then a more general upper bound 
will be proved in Section \ref{sec:hasdorff distance}. Finally in Appendix \ref{sec:graph functions}, we derive a general inequality for minimal graphs over another minimal hypersurface. Then we list some basic results of catenoids in Appendix \ref{sec:catenoids}.

\subsection*{Acknowledgments}
We would like to thank Professor Gang Tian for his constant support and encouragement. X.Z. was supported by NSF grant DMS-1945178,  and an Alfred P. Sloan Research Fellowship.

\section{Blowing-up analysis}\label{sec:blowup}

Let $\{M_k\}$ %$\{S_k\}$ 
be a sequence of $(n+1)$-spheres embedded in $\mb R^{n+2}=\mb R^{n+1}\times \mb R$. Denote by $S_0^n$ the unit sphere in $\mb R^{n+1}$. Let $\Sigma_k$ be a closed embedded connected minimal hypersurface in $M_k$. We use $B_r(p)$ and $B(p,r; M_k)$ to denote the geodesic ball in $\mb R^{n+1}$ and $M_k$, respectively. In this section, $M_k$ and $\Sigma_k$ always satisfy the following requirements.
\begin{enumerate}
	\item For each $k$, $S_0^n$ is a stable minimal hypersurface in $M_k$ with constant Jacobi fields;
	\item $M_k$ locally smoothly converges to $S_0^n\times \mb R$ in $\mb R^{n+2}$ as $k\to\infty$;
	\item $\Sigma_k$ converges to twice of $S_0^n$ in the sense of varifolds;
	\item $\Sigma_k$ has Morse index less than or equal to 2.
	\end{enumerate}

\begin{example}	\label{example}
Let $M_k$ be the $(n+1)$-sphere given by 
\[  x_1^2+x_2^2+\cdots+x_{n+1}^2+\frac{x_{n+2}^{2n}}{k^{2n}}=1 . \]
Suppose that $M_k$ contains an embedded minimal hypersurface $\Sigma_k$ with index less than or equal to 2 and 
\[ \Area(S^n_0)+\delta\leq \Area(\Sigma_k)\leq 3 \Area(S^n_0)-\delta \]
for some $\delta >0$. Then $\{\Sigma_k\}$ and $\{M_k\}$ satisfy our requirements in this section. 	
	\end{example}

By compactness \cite{Sharp17}, $\Sigma_k$ locally smoothly converges to $S_0^n$ with multiplicity two away from a set $\mc W$ consisting of one or two points. 
%$\mc W$. Here $\mc W\subset S_0^n$ contains one or two points. 
Let $p\in\mc W$. Then we can take a positive constant $\epsilon<10^{-1000}$ small enough such that the convergence for $\Sigma_k$ is locally smooth in $ B_{2\epsilon}(p)\setminus \{p\}$, and 
 \begin{equation}\label{eq:density upper bound}
  \Area(B(p,2\epsilon;M_k)\cap \Sigma_k)\leq \frac{5}{2} \cdot \frac{\Omega_{n-1}}{n}(2\epsilon)^n, 
  \end{equation}
  where $\Omega_m$ is the volume of unit $m$-spheres. Choose $p_k\in\Sigma_k$ so that 
\[|A^{\Sigma_k}(p_k)|=\max_{\Sigma_k\cap B(p,\epsilon;M_k)}|A^{\Sigma_k}(x)|.\] 
Clearly, $|A^{\Sigma_k}(p_k)|\to\infty$ as $k\to\infty$. In the following, we classify the limit cones of $\{\Sigma_k\}$ at $p$, which is either a hyperplane with multiplicity or a catenoid.
\begin{proposition}[Classification of limit cones]\label{prop:limit cones}
Let $\{c_k\}$ be a sequence of positive numbers with $c_k\to\infty$. 
\begin{enumerate}
	\item If $\lim_{k\to\infty}|A^{\Sigma_k}(p_k)|/c_k=\ell>0$, then $c_k(\Sigma_k-p_k)$ locally smoothly converges to a catenoid contained in $\mb R^{n+1}$;
	\item If $\lim_{k\to\infty}|A^{\Sigma_k}(p_k)|/c_k=0$, then $c_k(\Sigma_k-p_k)$ locally smoothly converges to a hyperplane contained in $\mb R^{n+1}$;
	\item If $\lim_{k\to\infty}|A^{\Sigma_k}(p_k)|/c_k=\infty$, then $c_k(\Sigma_k-p_k)$  converges to a multiplicity-two hyperplane contained in $\mb R^{n+1}$. Moreover, the convergence is locally smooth away from at most two points including 0.
	\end{enumerate}
\end{proposition}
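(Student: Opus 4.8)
The plan is to perform a standard blow-up analysis at the point $p$, rescaling $\Sigma_k$ around $p_k$ by the prescribed rate $c_k$, and to exploit the curvature bound built into the choice of $p_k$ together with the minimal surface equation and Schauder/compactness theory. Let me write $\widetilde\Sigma_k := c_k(\Sigma_k - p_k)$, a minimal hypersurface in $c_k(M_k - p_k)$, which, since $c_k\to\infty$ and $M_k$ converges locally smoothly to $S_0^n\times\mathbb R$, converges locally smoothly to a hyperplane $\mathbb R^{n+1}\subset\mathbb R^{n+2}$ (the blow-up of $S_0^n\times\mathbb R$ at any point, flattened by the infinite rescaling). The key quantity is $|A^{\widetilde\Sigma_k}(0)| = |A^{\Sigma_k}(p_k)|/c_k$, whose limit $\ell$ governs the three cases. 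In \emph{Case (1)}, $\ell\in(0,\infty)$: the rescaled second fundamental forms are uniformly bounded on compact sets of $\mathbb R^{n+2}$ (this requires a short argument that the curvature concentration point $p_k$ controls the curvature in a fixed-size, then in a growing, ball after rescaling — using that $p_k$ was chosen to maximize $|A^{\Sigma_k}|$ on $B(p,\epsilon;M_k)$, so $|A^{\widetilde\Sigma_k}|\le |A^{\widetilde\Sigma_k}(0)|$ on the whole rescaled ball $B(0,c_k\epsilon;c_k(M_k-p_k))$, which exhausts $\mathbb R^{n+2}$). With uniform $C^{1,\alpha}$ (hence, by elliptic regularity for the minimal surface equation, $C^\infty_{loc}$) bounds and the area bound \eqref{eq:density upper bound} giving local area bounds after rescaling, we extract a locally smooth subsequential limit $\Sigma_\infty$: a complete embedded minimal hypersurface in $\mathbb R^{n+1}$ (it lies in the hyperplane since $\widetilde\Sigma_k$ does in the limit) with $|A^{\Sigma_\infty}(0)| = \ell > 0$, so it is not flat, and with density at infinity at most $5/2$ by \eqref{eq:density upper bound} — hence exactly $2$. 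By the classification of complete embedded minimal hypersurfaces with two ends (\cite{Sch83_1}, cited in the excerpt), $\Sigma_\infty$ is a catenoid.

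In \emph{Case (2)}, $\ell = 0$: the rescaled curvature at the origin tends to zero, and by the maximality of $p_k$ the curvature of $\widetilde\Sigma_k$ tends to zero uniformly on compact sets, so the limit $\Sigma_\infty$ is a complete minimal hypersurface in $\mathbb R^{n+1}$ with $A\equiv 0$, i.e. a hyperplane; the area bound forces multiplicity one, so the convergence is locally smooth to a single hyperplane. \emph{Case (3)}, $\ell = \infty$, is the delicate one: here $c_k$ is too slow to resolve the curvature scale, so one expects the two sheets of $\Sigma_k$ to collapse onto a hyperplane with multiplicity two. The natural approach is to note that the \emph{intrinsic} scale $\rho_k := 1/|A^{\Sigma_k}(p_k)|$ satisfies $c_k\rho_k\to 0$; one first blows up at the faster rate $1/\rho_k = |A^{\Sigma_k}(p_k)|$ and invokes Case (1) (or its analysis) to see a catenoid $\mathcal C$ at that scale; then $\widetilde\Sigma_k = (c_k\rho_k)\cdot\big(\tfrac{1}{\rho_k}(\Sigma_k-p_k)\big)$ is a rescaling of (something converging to) the catenoid by a factor $c_k\rho_k\to 0$, and a shrinking catenoid converges as a varifold to its asymptotic planes: two copies of the hyperplane through the waist, with the convergence smooth away from the origin — and, because there may be a \emph{second} point of $\mathcal W$ (a second catenoidal neck) that, under this intermediate rescaling, could be sent to a finite nonzero point, one gets locally smooth convergence away from \emph{at most two} points including $0$. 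This matches the statement precisely.

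The main obstacle is \emph{Case (3)}, and specifically making rigorous the two-step rescaling: one must control $\Sigma_k$ on the annular region between the catenoid scale $\rho_k$ and the cutoff scale $\epsilon$ well enough to know that, after rescaling by $c_k$, nothing escapes to infinity and no new curvature concentration appears except possibly at a second point of $\mathcal W$. This is exactly where the smooth convergence $\Sigma_k\to 2S_0^n$ on $B_{2\epsilon}(p)\setminus\{p\}$ (i.e. away from $\mathcal W$) is used: outside any fixed small ball around the points of $\mathcal W$, $\Sigma_k$ is a smooth $2$-sheeted graph with curvature $\to 0$, so after the slow rescaling by $c_k$ these regions limit to the multiplicity-two hyperplane with locally smooth convergence; the only places smoothness can fail are the images of the points of $\mathcal W$ under $x\mapsto c_k(x-p_k)$, of which there are at most two (one being $0$ itself, the others possibly escaping to infinity or converging to a finite point). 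I would also double-check the monotonicity/density computation that pins the multiplicity at exactly two rather than one in Cases (1) and (3): the lower bound comes from the varifold convergence $\Sigma_k\to 2S_0^n$ together with the fact that $p\in\mathcal W$ is a genuine concentration point (so the blow-up is nontrivial), and the upper bound $5/2 < 3$ from \eqref{eq:density upper bound} rules out multiplicity three or higher, leaving multiplicity two.
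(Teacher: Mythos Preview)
Your treatment of Cases (1) and (2) is essentially the paper's: use the maximality of $|A^{\Sigma_k}|$ at $p_k$ to get uniform curvature bounds on $\widetilde\Sigma_k$, pass to a smooth limit in $\mathbb R^{n+1}$, and classify. (The paper routes the classification through Tysk's finite-index/polynomial-growth result before invoking Schoen, which is the safer path when $n\geq 3$; your direct appeal to ``two ends $\Rightarrow$ catenoid'' is really a statement about $\mathbb R^3$.)

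Case (3) is where your approach diverges from the paper's and acquires a genuine gap. Your two-step rescaling --- blow up at the catenoid scale $1/\rho_k$, then shrink by $c_k\rho_k\to 0$ --- only controls $\widetilde\Sigma_k$ on the ball of radius $R_k\cdot c_k\rho_k$, where $R_k\to\infty$ is the (uncontrolled) radius on which the catenoid approximation holds at scale $1/\rho_k$; there is no reason for $R_k c_k\rho_k\to\infty$. Your patch from the outside, using the smooth convergence $\Sigma_k\to 2S_0^n$ on $B_{2\epsilon}(p)\setminus B_\delta(p)$ for fixed $\delta>0$, gives nothing on compact sets after rescaling: that annulus becomes $B_{2c_k\epsilon}(0)\setminus B_{c_k\delta}(0)$, which escapes to infinity. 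The intermediate annulus between scales $\rho_k$ and $1/c_k$ is therefore left completely uncontrolled --- and controlling exactly this region is the hard analysis carried out later in Section~4, far beyond what this proposition needs. (Relatedly, your identification of the possible singular points with ``the images of the points of $\mathcal W$'' is off: a second point $q\in\mathcal W$ with $q\neq p$ is sent to infinity under $x\mapsto c_k(x-p_k)$; the genuine second singular point, when it occurs, comes from a second neck near the \emph{same} point $p$.)

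The paper instead argues softly and directly at scale $c_k$. One passes to a varifold limit using the area bound \eqref{eq:density upper bound} and monotonicity; since $|A^{\widetilde\Sigma_k}(0)|\to\infty$, the convergence cannot be smooth at $0$, so Allard's regularity theorem forces the multiplicity to be at least $2$. A multiplicity-$\geq 2$ limit of index-$\leq 2$ hypersurfaces is stable, hence a hyperplane by the Bernstein-type theorems of Schoen--Simon and Schoen--Simon--Yau. The index bound $\leq 2$ then restricts the set where smooth convergence fails to at most two points, and the density bound \eqref{eq:density bounds} pins the multiplicity at exactly $2$. No information about intermediate scales is required.
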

\begin{proof}
Note that $c_k(\Sigma_k-p_k)$ is always a minimal hypersurface in $c_k(M_k-p_k)$ with $\Index\leq 2$. By the monotonicity formula and \eqref{eq:density upper bound}, 
\begin{equation}
\label{eq:density bounds}
 \lim_{k\to\infty} \Area(B_r(0)\cap c_k(\Sigma_k-p_k))\leq \frac{5}{2}\cdot \frac{\Omega_{n-1}}{n} r^n .
 \end{equation}
Clearly, $c_k(M_k-p_k)$ converges locally smoothly to $\mb R^{n+1}$. Thus $c_k(\Sigma_k-p_k)$ converges to an embedded minimal hypersurface with integer multiplicity in $\mb R^{n+1}$ with $\Index\leq 2$. By the work of Tysk \cite{Tys}, the limit minimal hypersurface, which has finite Morse index and polynomial volume growth~\eqref{eq:density bounds}, must have a unique tangent cone at infinity; therefore using the work of Schoen  \cite{Sch83_1}*{Theorem 3} (see also \cite{HHW}*{Theorem 1.3}), the limit minimal hypersurface is either a catenoid or a hyperplane.
%By the work of Schoen \cite{Sch83_1}*{Theorem 3} (see also \cite{HHW}*{Theorem 1.3}), such a limit is a catenoid or a hyperplane. We remark that the condition of regular ends comes from the uniqueness of tangent cones at infinity, which has been proved by Tysk in \cite{Tys}.

If $\lim_{k\to\infty}|A^{\Sigma_k}(p_k)|/c_k=\ell<\infty$, then for any compact set $\Omega\subset \mb R^{n+2}$, $c_k(\Sigma_k-p_k)\cap \Omega$ has uniformly bounded second fundamental form, and this implies that the convergence is locally smooth. When $\ell>0$, the limit hypersurface has at least one point that has non-zero curvature. Thus the limit is a catenoid. When $\ell=0$, the limit hypersurface is flat and hence is a hyperplane.
%has zero curvature everywhere. Then it is a plane.

If $\lim_{k\to\infty}|A^{\Sigma_k}(p_k)|/c_k=\infty$, then the convergence is non-smooth. By Allard's Regularity Theorem \cite{Si}*{Theorem 24.2}, the multiplicity of the convergence is larger than or equal to two. Thus the limit is a stable minimal hypersurface in $\mb R^{n+1}$ with polynomial (degree $n$) area growth, which can only be a hyperplane by Schoen-Simon \cite{SS} and Schoen-Simon-Yau \cite{SSY}. Since the index of $\Sigma_k$ is bounded from above by 2, the convergence is locally smooth away from at most two points. Together with \eqref{eq:density bounds}, we also have that the multiplicity of the convergence is exactly 2. Hence Proposition \ref{prop:limit cones} is proved. 
	\end{proof}

\begin{remark}\label{rmk:non-smooth convergence}
Let $x_k\in \Sigma_k$ and $c_k\to \infty$. Then $c_k(\Sigma_k-x_k)$	converges to a multiplicity-one catenoid or hyperplane with multiplicity one or two in the sense of varifold. Suppose that the convergence is not locally smooth. Then the limit is a multiplicity-two hyperplane.
	\end{remark}

Denote by $r_{k,1}=\sqrt {n(n-1)}|A^{\Sigma_k}(p_k)|^{-1}$. Take $y_{k,1}\in \Sigma_k$ so that $r_{k,1}^{-1}(\Sigma_k-y_{k,1})$ converges to a standard catenoid $\mc C$ in $\mb R^{n+1}$; see Appendix \ref{sec:catenoids} for several properties of the geometry of $\mc C$. In the remaining of this section, we are going to find a sequence of ``bad balls'', in which $\Sigma_k$ is not flat enough even after rescaling.

So long as
\[ \max_{x\in B(y_{k,1},\epsilon;M_k)\cap \Sigma_k}|A^{\Sigma_k}(x)||x-y_{k,1}|<2\sqrt{n(n-1)},\]
we let $y_{k,2}:=y_{k,1}$. Otherwise, take $y'_{k,2}\in \Sigma_k$ so that 
\[ \max_{x\in B(y_k,\epsilon;M_k)\cap \Sigma_k}|A^{\Sigma_k}(x)|x-y_k|=|A^{\Sigma_k}(y_{k,2}')||y_{k,2}'-y_{k,1}|\geq 2\sqrt{n(n-1)} .\]
Then by Proposition \ref{prop:limit cones}, $|A^{\Sigma_k}(y_{k,2}')|(\Sigma_k-y_{k,2}')$ converges locally smoothly to a catenoid in $\mb R^{n+1}$. Then we can take $y_{k,2}$ around $y_{k,2}'$ and $r_{k,2}>0$ so that $r_{k,2}^{-1}(\Sigma_k-y_{k,2})$ converges locally smoothly to a standard catenoid. Let 
\[ r_k=\max\{r_{k,1},r_{k,2}\}. \]
Denote by 
\begin{equation}
\label{eq:b_k}
b_k:=|y_{k,1}-y_{k,2}|.
\end{equation}  
By the locally smooth convergence of $\Sigma_k$ in $B_{\epsilon}(p)\setminus\{p\}$, we know that \[ b_k\to 0, \text{ as } k \to\infty .\] 
Note that in a standard catenoid (see Appendix \ref{sec:catenoids}), we have $|x|\cdot |A(x)|\leq\sqrt{n(n-1)}$. 
Therefore, we also have that 
\[ b_k/r_k\to\infty, \text{ if } y_{k,1}\neq y_{k,2}.\]
For simplicity, denote by $A(p,r,s;M_k)=B(p,s;M_k)\setminus B(p,r;M_k)$, $\mc W_k=\{y_{k,1}, y_{k,2}\}$ and 
\begin{equation}
\label{eq:d_k}
d_k(x)=\dist_{M_k}(x,\mc W_k).
\end{equation}

We now claim that for all sufficiently large $k$,
\begin{equation}\label{eq:curvature}
  \max_{x\in B(p,\epsilon;M_k)\cap \Sigma_k}|A^{\Sigma_k}(x)|d_k(x)<2\sqrt{n(n-1)}.  
  \end{equation}
Suppose not, then one can find $y_{k,3}$ and another small cantenoid. Observe that each small catenoid will contribute index 1. This contradicts that $\Sigma_k$ has index less than or equal to 2.

From now on, let $\Sigma_k^1$ and $\Sigma_k^2$ be the two connected components of $\Sigma_k\setminus \cup_{j}B(y_{k,j},2r_{k,j};M_k)$. Without loss of generality,  outside $B(p,\epsilon;M_k)$, we choose the unit normal vector field of $\Sigma_k^1$ pointing towards $\Sigma_k^2$.
\begin{proposition}
\label{prop:2nd order estimates}
	Given $\delta>0$, there exists $K>0$ such that for all $k\geq K$, the following statements hold true.
	\begin{enumerate}
	\item\label{item:A bound} 	The second fundamental forms satisfy for $x\in \Sigma_k\setminus \cup_{j=1}^2B(y_{k,j},Kr_{k,j};M_k)$,
  \begin{equation}\label{eq:|A| small}
 d_k\cdot|A^{\Sigma_k}(x)| +d^2_k\cdot|\nabla A^{\Sigma_k}(x)|<\delta. 
  \end{equation}
   Moreover, $\Sigma_k^1$ is a minimal graph over $\Sigma_k^2$.
   Denote by $w_k$ the graph function.
   \item\label{item:der bound} For every $x\in \Sigma_k^1\setminus \cup_{j=1}^2B(y_{k,j},Kr_{k,j};M_k)$,
    \begin{equation}\label{eq:derivative of graphs}
   d_k^2\frac{|\nabla^2 w_k|}{w_k}(x)+d_k\frac{|\nabla w_k|}{w_k}+\frac{w_k}{d_k} <\delta.
   \end{equation} 
   \item\label{item:harnack} For any  $s>K r_k$ and $x, y\in\Sigma_k^1\cap A(y_k,  s, 4s; M_k)\setminus B(z_k,s/4;M_k)$,
\[ 
  w_k(x)\leq (1+\delta)w_k(y).
  \]
\end{enumerate}
\end{proposition}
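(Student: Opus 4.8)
The plan is to prove the three assertions by a sequence of blow-up/contradiction arguments, combining \eqref{eq:curvature}, the index bound, the compactness of \cite{Sharp17}, and the cone classification (Proposition \ref{prop:limit cones}, Remark \ref{rmk:non-smooth convergence}) with the observation that, by assumptions (1)--(2), the Jacobi operator of $\Sigma_k^2$ converges to $\Delta_{S_0^n}$: the slice $S_0^n\times\{0\}$ is totally geodesic in $S_0^n\times\mb R$ with vanishing Ricci curvature in the $\mb R$ direction, and carries only constant Jacobi fields.

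\textbf{The curvature estimate and the graph structure.} Suppose \eqref{eq:|A| small} fails: along a subsequence there are $\delta_0>0$ and points $x_k\in\Sigma_k\setminus\bigcup_jB(y_{k,j},K_kr_{k,j};M_k)$ with $K_k\to\infty$ and $d_k(x_k)|A^{\Sigma_k}(x_k)|+d_k(x_k)^2|\nabla A^{\Sigma_k}(x_k)|\geq\delta_0$. Since $|A^{\Sigma_k}|\to0$ away from $\mc W$ while $d_k$ stays bounded there, $x_k$ must approach a point of $\mc W$ and $t_k:=d_k(x_k)\to0$; let $y_k^\ast\in\mc W_k$ realize $d_k(x_k)$. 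The rescalings $\hat\Sigma_k:=t_k^{-1}(\Sigma_k-y_k^\ast)$ are minimal with $\Index\leq 2$ in ambient spaces converging to $\mb R^{n+1}$, satisfy \eqref{eq:density bounds}, and by \eqref{eq:curvature} obey $|A^{\hat\Sigma_k}(\cdot)|\cdot\dist(\cdot\,,t_k^{-1}(\mc W_k-y_k^\ast))\leq 2\sqrt{n(n-1)}$. By \cite{Sharp17} they converge to an integer-multiplicity minimal hypersurface in $\mb R^{n+1}$ with Euclidean volume growth and finite index, hence (Tysk \cite{Tys}, Schoen \cite{Sch83_1}) a catenoid or a hyperplane; but the neck of $\Sigma_k$ at $y_k^\ast$ has size $\asymp r_k/t_k\to0$, so the convergence is non-smooth at $0$, and Remark \ref{rmk:non-smooth convergence} forces the limit to be a multiplicity-two hyperplane $2P$ with $0\in P$, the convergence being smooth away from $0$ and a possible second blown-down neck. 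Since $\hat x_k:=t_k^{-1}(x_k-y_k^\ast)$ lies at distance $1$ from $0$ and, by the exclusion of a third neck in \eqref{eq:curvature} together with $x_k\notin B(y_{k,j},K_kr_{k,j};M_k)$, at distance bounded below from both blown-down necks, the convergence is smooth near $\hat x_k$, so $|A^{\hat\Sigma_k}(\hat x_k)|+|\nabla A^{\hat\Sigma_k}(\hat x_k)|\to 0$, contradicting $\geq\delta_0$. Given \eqref{eq:|A| small}, each $\Sigma_k^i$ is uniformly $C^2$-close to a fixed copy of $S_0^n$ away from the necks, while on $\partial B(y_{k,j},2r_{k,j};M_k)$ the two sheets are the mutually graphical outer ends of the small catenoid; splicing these descriptions exhibits $\Sigma_k^1$ as a graph of small $C^2$ norm over $\Sigma_k^2$, minimal since it is part of $\Sigma_k$.

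\textbf{The estimates for $w_k$.} The bound $w_k/d_k<\delta$ follows from the same scheme: rescaling by $d_k(x_k)^{-1}$ about the nearest neck, both $\hat\Sigma_k^1$ and $\hat\Sigma_k^2$ converge smoothly to the \emph{same} hyperplane away from the blown-down necks, so $d_k(x_k)^{-1}w_k\to 0$ at the (interior, smooth) rescaled image of $x_k$ --- a contradiction. For the rest of \eqref{eq:derivative of graphs} and for the Harnack inequality (3) the point is that $w_k$ is \emph{nearly constant} on $\{d_k>Kr_k\}$. By Appendix \ref{sec:graph functions}, $w_k>0$ satisfies a nonlinear elliptic differential inequality which, using assumptions (1)--(2) and the bounds $d_k|A^{\Sigma_k}|<\delta$ and $w_k/d_k<\delta$ just obtained, is an $O(\delta)$-perturbation of $\Delta_{S_0^n}w_k=0$; moreover $w_k$ is positive on all of $\Sigma_k^2\simeq S_0^n\setminus\bigcup_jB(y_{k,j},2r_{k,j};M_k)$, with boundary values $\asymp r_{k,j}$ on $\partial B(y_{k,j},2r_{k,j};M_k)$ forced by the standard-catenoid normalization of Appendix \ref{sec:catenoids}. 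Because the necks have vanishing capacity as $r_{k,j}\to0$ and $w_k/d_k<\delta$ precludes any Green's-function-type concentration of $w_k$ at them (which would make $w_k/d_k\to\infty$ on $\partial B(y_{k,j},Kr_{k,j};M_k)$), the functions $w_k$, normalized by their average over $\Sigma_k^2$, converge to the constant $1$: in $C^2_{\mathrm{loc}}(S_0^n\setminus\mc W)$, and also --- via a blow-up at each intermediate scale $r_k\ll s\ll\epsilon$ near each $y_{k,j}$, where $s/r_k\to\infty$ renders the singular-harmonic part ($|x|^{2-n}$ for $n\geq3$, $\log|x|$ for $n=2$) of the rescaled height negligible relative to its constant part --- in the corresponding scale-invariant sense near each neck. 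Granting this, (3) is immediate, since on a dyadic annulus $A(y_{k,j},s,4s;M_k)$ with $s>Kr_k$ the essentially rotationally symmetric catenoid separation varies by a factor at most $1+\tfrac{\log 4}{\log K}<1+\delta$, and standard interior elliptic estimates for $w_k$ on the balls $B(x,d_k(x)/2;M_k)$ promote ``nearly constant'' to $d_k^2|\nabla^2w_k|/w_k+d_k|\nabla w_k|/w_k<\delta$.

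\textbf{The main obstacle.} The delicate step is upgrading \emph{boundedness} to \emph{smallness} in \eqref{eq:derivative of graphs} and (3). A purely local argument on $B(x,d_k(x)/2;M_k)$ gives only $d_k|\nabla w_k|/w_k\leq C(n)$ (the Harnack gradient estimate for positive solutions); reaching $<\delta$ requires the global picture --- $w_k$ positive on the closed manifold $S_0^n$ minus the vanishing-capacity necks, hence forced close to a positive solution on all of $S_0^n$, i.e. to a constant --- together with the sharp catenoid separation asymptotics, the logarithmic growth in the borderline case $n=2$ being absorbed by the dominant level $\asymp r_k|\log r_k|$ of $w_k$. Two further technical hurdles: (a) the $w_k$-equation is genuinely nonlinear because $w_k$ is a graph over the \emph{varying} hypersurface $\Sigma_k^2$ rather than a fixed one, so many higher-order error terms must be controlled self-improvingly by $d_k|A|$, $w_k/d_k$ and $d_k|\nabla w_k|/w_k$; and (b) when $b_k\sim s$ the blow-ups near one neck detect the other, forcing one to excise $B(z_k,s/4;M_k)$ and argue as in Proposition \ref{prop:bound wk both}.
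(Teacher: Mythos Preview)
Your approach matches the paper's: a blow-up contradiction for (1), and convergence of normalized $w_k$ to a constant Jacobi field for (2). The main differences are organizational. The paper derives (3) directly from (2) --- integrating $|\nabla\log w_k|<\delta/d_k$ along a path in the annulus (where $d_k\asymp s$ and the path has length $O(s)$) gives $|\log w_k(x)-\log w_k(y)|=O(\delta)$ --- which is cleaner than your separate catenoid-asymptotic argument. For (2), the paper blows up at the violating point $x_k$ itself (not at the neck), splits into the cases $\lim d_k(x_k)\neq0$ and $d_k(x_k)\to0$, and in each case asserts that the normalized limit is a positive Jacobi field on $S_0^n$ (citing \cite{Si87}) or on the full hyperplane, hence constant by assumption~(1) on $S_0^n$ or by Liouville. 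Your more hands-on attempt to exclude a Green's-function pole via ``$w_k/d_k<\delta$ would be violated at the neck boundary'' identifies the right obstruction but is not complete as written: the convergence of the normalized $w_k$ to its limit holds only at scales bounded away from the neck, so you cannot legitimately evaluate the putative pole on $\partial B(y_{k,j},Kr_{k,j};M_k)$, and a pole of catenoid-flux strength is in fact compatible with $w_k/d_k$ small at every fixed positive scale. The paper's appeal to \cite{Si87} for this removable-singularity step is equally terse, but at least points to where the work is done.
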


\begin{proof}
Note that by Remark \ref{rmk:non-smooth convergence}, the limit of $d^{-1}_k(x_k)(\Sigma_k-x_k)$ can only be a multiplicity-two hyperplane if $d_k(x_k)/r_{k,j}\to\infty$. Then the first item follows from standard blowup and contradiction arguments. Observe the third one follows directly from the second one. Hence it suffices to prove the second item.

Suppose on the contrary that there exists $\delta>0$ and $\alpha_k\to\infty$ such that for a subsequence of $k\geq \alpha_k$, there exists $x_k\in \Sigma_k\setminus \cup_{j=1}^2B(y_{k,j},\alpha_kr_{k,j};M_k)$ violating \eqref{eq:derivative of graphs}. So long as $\lim_{k\to\infty} d_k(x_k)\neq 0$, after renormalizations, the sequence, still denoted by $\{w_k\}$, will converge locally smoothly to a Jacobi field of $S_0^n$ away from at most two points (see \cite{Si87}), which is a constant function. This will give the desired contradiction.

It remains to consider the situation $d_k\to0$ and $d_k/r_{k,j}\geq \alpha_k\to\infty$. By Remark \ref{rmk:non-smooth convergence}, $d_k^{-1}(\Sigma_k-x_k)$ converges to a hyperplane with multiplicity two. Then the normalizations of $\{w_k\}$ will converge locally smoothly to a positive Jacobi field of the hyperplane, which is also a constant function.  This will also give a contradiction.
%Therefore, Proposition \ref{prop:2nd order estimates} is proved.
\end{proof}

\begin{figure}[h]
	\begin{center}
		\def\svgwidth{0.7\columnwidth}
		%% Creator: Inkscape 1.1.1 (3bf5ae0d25, 2021-09-20), www.inkscape.org
%% PDF/EPS/PS + LaTeX output extension by Johan Engelen, 2010
%% Accompanies image file '' (pdf, eps, ps)
%%
%% To include the image in your LaTeX document, write
%%   \input{<filename>.pdf_tex}
%%  instead of
%%   \includegraphics{<filename>.pdf}
%% To scale the image, write
%%   \def\svgwidth{<desired width>}
%%   \input{<filename>.pdf_tex}
%%  instead of
%%   \includegraphics[width=<desired width>]{<filename>.pdf}
%%
%% Images with a different path to the parent latex file can
%% be accessed with the `import' package (which may need to be
%% installed) using
%%   \usepackage{import}
%% in the preamble, and then including the image with
%%   \import{<path to file>}{<filename>.pdf_tex}
%% Alternatively, one can specify
%%   \graphicspath{{<path to file>/}}
%% 
%% For more information, please see info/svg-inkscape on CTAN:
%%   http://tug.ctan.org/tex-archive/info/svg-inkscape
%%
\begingroup%
  \makeatletter%
  \providecommand\color[2][]{%
    \errmessage{(Inkscape) Color is used for the text in Inkscape, but the package 'color.sty' is not loaded}%
    \renewcommand\color[2][]{}%
  }%
  \providecommand\transparent[1]{%
    \errmessage{(Inkscape) Transparency is used (non-zero) for the text in Inkscape, but the package 'transparent.sty' is not loaded}%
    \renewcommand\transparent[1]{}%
  }%
  \providecommand\rotatebox[2]{#2}%
  \newcommand*\fsize{\dimexpr\f@size pt\relax}%
  \newcommand*\lineheight[1]{\fontsize{\fsize}{#1\fsize}\selectfont}%
  \ifx\svgwidth\undefined%
    \setlength{\unitlength}{470.194691bp}%
    \ifx\svgscale\undefined%
      \relax%
    \else%
      \setlength{\unitlength}{\unitlength * \real{\svgscale}}%
    \fi%
  \else%
    \setlength{\unitlength}{\svgwidth}%
  \fi%
  \global\let\svgwidth\undefined%
  \global\let\svgscale\undefined%
  \makeatother%
  \begin{picture}(1,0.27928635)%
    \lineheight{1}%
    \setlength\tabcolsep{0pt}%
    \put(0,0){\includegraphics[width=\unitlength]{Structure_of_Sigmak.eps}}%
    \put(0.35352698,0.13826155){\color[rgb]{0,0,0}\makebox(0,0)[lt]{\lineheight{1.25}\smash{\begin{tabular}[t]{l}link of the catenoid around $y_{k,1}$\end{tabular}}}}%
    \put(0.57540752,0.2054209){\color[rgb]{0,0,0}\makebox(0,0)[lt]{\lineheight{1.25}\smash{\begin{tabular}[t]{l}link of the catenoid around $y_{k,2}$\end{tabular}}}}%
  \end{picture}%
\endgroup%

		\caption{Structure of $\Sigma_k$.}
		\label{fig:Sigmak structure}
	\end{center}
\end{figure}

\begin{remark}\label{rmk:structure of Sigmak}
For $\{\Sigma_k\}$ satisfying the requirements at the beginning of this section, we summarize the properties of $\Sigma_k$ (see Figure \ref{fig:Sigmak structure}) for large $k$:
\begin{enumerate}
	\item there exists $\{y_{k,j}\}$ containing one or two points such that for each $j$, $\Sigma_k$ has a small catenoid with radius $r_{k,j}$ around $y_{k,j}$;
	\item $\Sigma_k$ has exactly two connected components (denoted as $\Sigma_k^1$ and $\Sigma_k^2$) after removing such two (possibly one) small catenoids;
	\item $\Sigma_k^2$ is a graph over $\Sigma_k^1$ and the graph function satisfies Proposition \ref{prop:2nd order estimates}.
\end{enumerate}
\end{remark}

\section{Proof of the main theorem}\label{sec:main proof}
For any $a\geq 1$ and $3\leq (n+1)\leq 7$, let $M_a$ be the embedded $(n+1)$-sphere in $\mb R^{n+2}$ given by 
\[ x_1^2+ x_2^2\cdots+x_{n+1}^2+\frac{x_{n+2}^{2n}}{a^{2n}}=1.  \]
For simplicity, let $S_t:=\{x_{n+2}=t\}\cap M_a$, where we omit the $a$ without ambiguity. These $(n+1)$-spheres satisfy the following properties:
\begin{enumerate}[(A)]
	\item\label{item:Ric geq 0} it has non-negative Ricci curvature;
	\item\label{item:others unstable} $S^n_0=S_0$ is the unique stable minimal hypersurface for each $a\geq 1$; it follows that each minimal hypersurface is two-sided;
	\item $M_a\to S_0^n\times \mb R$ locally smoothly as $a\to\infty$;
	\item \label{item:cmc foliation}  $\{S_t\}_{t=-a}^a$ forms a foliation of $M_a$ and $\{x_{n+2}=t\}$ is an embedded $n$-sphere with mean curvature vector pointing away from  $S_0$ for $0<|t|<a$. In particular, any two embedded minimal hypersurfaces in $M_a$ intersect each other;
	\item the area of $S_t\subset M_a$ satisfies that for all $t$,
	\begin{equation}\label{eq:bound St}
	\Area(S_t)=\Omega_{n} (1-\frac{t^{2n}}{a^{2n}})\geq \Omega_n(1-|t|^{2n}) .
	\end{equation}
	\item for each $t$ with $0<|t|<a$,
	\[  |t|\leq \dist_{M_a}(S_0,S_t)\leq 2|t| . \] 
\end{enumerate}

Before stating our main results, we introduce the key height estimates, which will be proved in the next section (Theorem \ref{thm:hausdorff distance} and \ref{thm:hausdorff distance:high}).
\begin{theorem}\label{thm:Hausdorff distance0}
Let $\Sigma_k\subset M^{n+1}_k$ be a sequence of embedded minimal hypersurfaces with $\Index\leq 2$ and 
\[  \Area(\Sigma_k)\leq 3\Area(S_0^n)-\delta  \]	
for some $\delta>0$. Then for all sufficiently large $k$,
\[  \max_{\Sigma_k}\dist_{M_k}(x,S^n_0)\leq 8r_k|\log r_k|.\]
\end{theorem}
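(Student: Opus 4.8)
\medskip
\noindent\emph{Proof plan.} The idea is to put $\{\Sigma_k\}$ into the framework of Section~\ref{sec:blowup}, reduce the Hausdorff bound to a $C^0$ bound on the separation function $w_k$ between the two graphical sheets of $\Sigma_k$, and obtain the latter from a Colding--Minicozzi type monotonicity analysis; this last step is the content of Section~\ref{sec:hasdorff distance}, so that Theorem~\ref{thm:Hausdorff distance0} follows by combining Theorems~\ref{thm:hausdorff distance} and~\ref{thm:hausdorff distance:high}, which handle the one- and two-neck cases. To begin, one checks that $\{\Sigma_k\}$ and $\{M_k\}$ satisfy the hypotheses of Section~\ref{sec:blowup}: the metrics $M_k$ have properties (A)--(F) of Section~\ref{sec:main proof}, so $S^n_0$ is a stable minimal hypersurface with constant Jacobi fields and $M_k\to S^n_0\times\mb R$ locally smoothly, while $\Index(\Sigma_k)\le 2$ by hypothesis and $\Sigma_k$ is connected by property (D). By the compactness theorem \cite{Sharp17} a subsequence of $\Sigma_k$ converges as varifolds to a stationary integral varifold $V$ in $S^n_0\times\mb R$ of mass $\le 3\Area(S^n_0)-\delta$; writing $V$ as a sum of slices and noting that a sheet of $\Sigma_k$ accumulating onto a slice $S^n_0\times\{t\}$ with $t\ne 0$ would have to be a minimal graph over $S_t$, which is impossible since $S_t$ has nonzero mean curvature (property (D)), we get $V=m\,S^n_0$ with $m\in\{1,2\}$. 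If $m=1$ the convergence is smooth (curvature cannot concentrate without doubling the local multiplicity) and $\Sigma_k$ is eventually a small normal graph over $S^n_0$, where the estimate is elementary; so we may assume $m=2$, and then the structure of Remark~\ref{rmk:structure of Sigmak} applies: $\mc W_k=\{y_{k,j}\}$ with $1\le\#\mc W_k\le2$, catenoidal necks of radii $r_{k,j}$, $r_k=\max_j r_{k,j}$, two graphical pieces $\Sigma^1_k,\Sigma^2_k$, and $\Sigma^2_k$ a graph over $\Sigma^1_k$ with positive graph function $w_k$ obeying Proposition~\ref{prop:2nd order estimates}.

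Next I would reduce the Hausdorff estimate to a bound on $w_k$. Let $R_k$, with $R_k/r_k\to\infty$ and $R_k\to0$, be the scale up to which $\Sigma_k$ is $C^2$-close to the union of the $r_{k,j}$-catenoids. Since the waists of those necks lie within $\mc O(r_k)$ of $S^n_0$ (the points of $\mc W$ lie on $S^n_0$), the catenoid asymptotics of Appendix~\ref{sec:catenoids} give $\dist_{M_k}(x,S^n_0)\le r_k\cosh^{-1}(R_k/r_k)+\mc O(r_k)=\mc O(r_k|\log r_k|)$ on $\bigcup_j B(y_{k,j},R_k;M_k)\cap\Sigma_k$. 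On the rest of $\Sigma_k$, write $\Sigma^1_k\cup\Sigma^2_k$ as a slab of width $w_k$ about a mid-hypersurface $\Sigma^{\mathrm{mid}}_k$, so that $\dist_{M_k}(x,S^n_0)\le\dist_{M_k}(\Sigma^{\mathrm{mid}}_k,S^n_0)+\tfrac12\|w_k\|_{C^0}$. The hypothesis $\Ric\ge0$ together with the constant-Jacobi-field condition forces $|A|\equiv0$ and $\Ric(\nu)\equiv0$ along $S^n_0\subset M_k$ for each $k$, so the Jacobi operator of $S^n_0$ is the plain Laplacian; hence the graphical height of $\Sigma^{\mathrm{mid}}_k$ over $S^n_0$ solves an equation with leading part $\Delta_{S^n_0}$ and right-hand side concentrated near the necks of size $\mc O(r_k)$, and a capacity/maximum-principle estimate on $S^n_0$ minus small geodesic balls bounds $\dist_{M_k}(\Sigma^{\mathrm{mid}}_k,S^n_0)$ by $\mc O(r_k|\log r_k|)$. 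Consequently the whole theorem reduces to an estimate of the form $\|w_k\|_{C^0(\Sigma^1_k)}=\mc O(r_k\log(\epsilon/r_k))$ with an explicit constant, which together with the above and the elementary inequality $\log(\epsilon/r_k)=|\log r_k|-|\log\epsilon|\le|\log r_k|$ gives the bound $8\,r_k|\log r_k|$ once $k$ is large enough.

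The heart of the matter is this bound on $w_k$, established in Section~\ref{sec:hasdorff distance}. Its input \eqref{eq:Ik Rk} is the catenoid-scale estimate $\sup_{\partial B(y_k,R_k;M_k)}w_k\le 3\,r_k\log(R_k/r_k)$; to propagate it out to scale $\epsilon$, one studies the averages $I_k(s)=\avint_{\Sigma^1_k\cap\partial B(y_k,s;M_k)}w_k$ and, following \cite[Lemma 2.1]{CM02}, derives a differential inequality for $s\mapsto I_k(s)$ on $(R_k,\epsilon)$ whose linear part is governed by the Laplacian $\Delta_{S^n_0}$ (so $I_k$ can grow at most like $\mc O(r_k\log(s/r_k))$) and whose remaining terms are genuinely higher order. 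Integrating this inequality, then using the Harnack inequality of Proposition~\ref{prop:2nd order estimates} to pass from averages to sup bounds on each geodesic sphere, and finally the smooth convergence of $\Sigma_k$ outside $B(y_k,\epsilon;M_k)$ (where $w_k$ is comparable to its value on $\partial B(y_k,\epsilon;M_k)$), one reaches the required $C^0$ bound on $w_k$ and hence the theorem.

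I expect essentially all of the difficulty to lie in this last step, for the two reasons stressed in the introduction. First, in contrast to \cite{CM02}, the function $w_k$ is a graph function over the \emph{moving} minimal hypersurface $\Sigma^1_k$ rather than over a fixed hyperplane, and so satisfies only the highly nonlinear differential inequality \eqref{eq:upper bound for laplace wk}; this forces numerous higher-order terms into the monotonicity formulas for $I_k$, which must be absorbed using the smallness of $d_k|A^{\Sigma_k}|$, $d_k|\nabla w_k|/w_k$ and $w_k/d_k$ supplied by Proposition~\ref{prop:2nd order estimates}. Second, when the two necks are distinct but $b_k=|y_{k,1}-y_{k,2}|\to0$ with $b_k/r_k\to\infty$, one cannot center the geodesic spheres at a single point, and the monotonicity has to be run on spheres that avoid both necks --- this is the content of Proposition~\ref{prop:bound wk both}. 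By comparison, the reductions in the first two paragraphs are soft consequences of Sharp's compactness, the catenoid asymptotics, and the collapse of the Jacobi operator of $S^n_0$ to the plain Laplacian forced by $\Ric\ge0$.
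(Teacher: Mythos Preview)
Your overall architecture is right: put $\{\Sigma_k\}$ into the framework of Section~\ref{sec:blowup}, invoke Section~\ref{sec:hasdorff distance} (Theorems~\ref{thm:hausdorff distance} and~\ref{thm:hausdorff distance:high}) to bound the sheet separation $w_k$, and transfer this to a Hausdorff bound. The $w_k$-monotonicity is indeed the heart of the matter, and your sketch of that part matches the paper.

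However, your reduction from the Hausdorff distance to a $C^0$ bound on $w_k$ has a genuine gap. You assert that ``the waists of those necks lie within $\mc O(r_k)$ of $S_0^n$'' and that the height of the mid-hypersurface $\Sigma^{\mathrm{mid}}_k$ over $S_0^n$ is controlled by a capacity/maximum-principle argument. Neither holds as stated. The first claim is circular: the offset of $y_{k,j}$ from $S_0^n$ is part of the Hausdorff distance you are trying to bound, and nothing in the catenoid asymptotics tells you the catenoid is centered near $S_0^n$. As for the second, the mid-height $v_k\approx\tfrac12(u_k^1+u_k^2)$ is indeed approximately harmonic on $S_0^n$ away from the necks, but $S_0^n$ is \emph{closed}: an approximately harmonic function there is approximately a constant, and that constant is not determined by any capacity estimate (the fluxes of $u_k^1$ and $u_k^2$ through each neck cancel to leading order). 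Nothing you wrote excludes $v_k$ being a constant of order $\gg r_k|\log r_k|$.

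The paper supplies the missing ingredient in the proof of Theorem~\ref{thm:hausdorff distance}. It normalizes the two graph heights $u_k^1,u_k^2$ by $\Lambda_{k,\epsilon}=\max_{\partial B_\epsilon(\mc W)}\{-u_k^1,u_k^2\}$ and passes to the limit, obtaining constant Jacobi fields $c_1\leq c_2$ with $\max(-c_1,c_2)=1$. The key observation you are missing is the Frankel property~\eqref{item:cmc foliation}: $\Sigma_k$ cannot lie on one side of $S_0^n$, which forces $c_1\leq 0$. Hence $c_2-c_1\geq 1$, i.e.\ $\lim w_k/\Lambda_{k,\epsilon}\geq 1$, and \emph{this} is what bounds $\Lambda_{k,\epsilon}$---and so the Hausdorff distance, after the minimal-foliation barrier inside $B_\epsilon(\mc W)$ and Harnack outside---by $\max w_k$. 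Without using that $\Sigma_k$ must intersect $S_0^n$, the constant in your mid-hypersurface height cannot be pinned down.
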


\begin{remark}
Note that the estimates are sharp for dimension $n+1=3$ (Theorem \ref{thm:hausdorff distance}). In higher dimensions $4\leq n+1\leq 7$, the height estimates are much better (Theorem \ref{thm:hausdorff distance:high}).
\end{remark}

In the following, we prove a rigidity theorem for minimal hypersurfaces with low index and area. This will be useful to identify the min-max solutions that realize the second width.
\begin{theorem}\label{thm:rigidity}
	Suppose that $\Sigma_k$ is a closed embedded minimal hypersurface in $M_k$ with $\Index \leq 2$ and 
	\[  \Area(\Sigma_k)\leq 2\Area(S_0^n).\]
	Then for sufficiently large $k$, $\Sigma_k$ is exactly $S^n_0$.
\end{theorem}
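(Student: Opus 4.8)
The plan is to argue by contradiction: suppose $\Sigma_k$ is a closed embedded minimal hypersurface in $M_k$ with $\Index\le 2$, $\Area(\Sigma_k)\le 2\Area(S_0^n)$, and $\Sigma_k\ne S_0^n$ for infinitely many $k$. Passing to a subsequence I would first show that $\Sigma_k$ must converge, as varifolds, to $2S_0^n$. Indeed, by the compactness theorem of Sharp \cite{Sharp17} (using $\Index\le 2$ and the area bound $2\Area(S_0^n)<3\Area(S_0^n)$), after passing to a subsequence $\Sigma_k$ converges locally smoothly away from finitely many points to an embedded minimal hypersurface $\Sigma_\infty$ with integer multiplicity in $S_0^n\times\mb R$. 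Since $S_0^n$ is the unique stable minimal hypersurface in each $M_k$ (property \ref{item:others unstable}) and $\{S_t\}$ is a mean-convex foliation (property \ref{item:cmc foliation}), any two embedded minimal hypersurfaces in $M_k$ intersect; this forces $\Sigma_\infty$ to be supported on $S_0^n$. The multiplicity must then be exactly $2$: multiplicity $1$ would make $\Sigma_k$ itself converge smoothly to the nondegenerate-modulo-Jacobi-fields hypersurface $S_0^n$, and by the implicit function theorem / a Jacobi-field argument $\Sigma_k$ would equal $S_0^n$ for large $k$ (contradiction with our assumption), while multiplicity $\ge 3$ would violate $\Area(\Sigma_k)\le 2\Area(S_0^n)$ after using lower semicontinuity of area and the area lower bound \eqref{eq:bound St} for the nearby level sets. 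So $\Sigma_k\to 2S_0^n$ and the hypotheses of Section \ref{sec:blowup} (and of Theorem \ref{thm:Hausdorff distance0}) are satisfied.

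Now I would run the area-and-separation estimate exactly as in the "Idea of the proof": by Proposition \ref{prop:limit cones} and Remark \ref{rmk:structure of Sigmak}, $\Sigma_k$ consists of two graphical sheets $\Sigma_k^1,\Sigma_k^2$ over $S_0^n$ joined by one or two small catenoidal necks of radii $r_{k,j}$, with $r_k=\max_j r_{k,j}\to 0$. By Theorem \ref{thm:Hausdorff distance0}, $\max_{\Sigma_k}\dist_{M_k}(x,S_0^n)\le 8r_k|\log r_k|$, so $\Sigma_k$ lies in the tubular neighborhood $\{\dist_{M_k}(\cdot,S_0^n)\le 8r_k|\log r_k|\}$. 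Replacing each catenoidal neck annulus by two minimizing $n$-disks produces a new (singular) hypersurface $\Sigma_k'$ whose components are graphs sandwiched between level sets $S_t$ with $|t|\le 8r_k|\log r_k|$; by the one-sided minimizing property of the foliation $\{S_t\}$ together with the area lower bound \eqref{eq:bound St}, $\Area(\Sigma_k')\ge 2\Omega_n - \mc O\big((r_k|\log r_k|)^{2n}\big)$ (this is the estimate referenced as \eqref{eq:bound new surface}; note $2n\ge n+1$ in our dimension range so this dominates the error terms used below). On the other hand, each catenoidal neck of $\Sigma_k$ carries at least $c(n)r_{k,j}^n$ more area than the two flat $n$-disks replacing it — this is the standard catenoid area defect, referenced as \eqref{eq:scaling difference} and proved in Appendix \ref{sec:catenoids}. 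Combining,
\[
\Area(\Sigma_k)-2\Area(S_0^n)\ \ge\ c(n)r_k^n\ -\ \mc O\big((r_k|\log r_k|)^{2n}\big)\ >\ 0
\]
for $k$ large, since $r_k\to 0$ and $2n>n$. This contradicts $\Area(\Sigma_k)\le 2\Area(S_0^n)$, and therefore $\Sigma_k=S_0^n$ for all large $k$.

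A couple of technical points deserve care. First, in the multiplicity-one case of the convergence dichotomy, I need that $S_0^n$ is rigid among nearby minimal hypersurfaces despite having nontrivial (constant) Jacobi fields; this uses that the Jacobi fields of $S_0^n\subset M_k$ are exactly the constants (property of the construction, assumption (1) of Section \ref{sec:blowup}) together with the fact that the mean-convex foliation $\{S_t\}$ leaves no room for a distinct nearby minimal hypersurface — any smooth graph over $S_0^n$ that is minimal and $C^1$-close must coincide with $S_0^n$ by the maximum principle applied against the foliation. Second, when forming $\Sigma_k'$ I must make sure the two disks glue to the graphical sheets along $\partial B(y_{k,j},2r_{k,j};M_k)$ without creating extra area beyond the claimed order; this is where the Hausdorff bound $8r_k|\log r_k|$ from Theorem \ref{thm:Hausdorff distance0} is essential, as it controls the "height" of the boundary circles over which the disks are spanned.

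The main obstacle, as the authors themselves flag, is \emph{not} the rigidity/blow-up bookkeeping above but the input Theorem \ref{thm:Hausdorff distance0} — the Hausdorff-distance estimate $\le 8r_k|\log r_k|$ — whose proof (Section \ref{sec:hasdorff distance}) rests on the delicate Colding–Minicozzi-style monotonicity formulas for the averaged height function $w_k$ across the intermediate annular regions, where $\Sigma_k$ need not be bigraphical over $S_0^n$. Granting that theorem, the proof of Theorem \ref{thm:rigidity} is the relatively soft combination of compactness, the structure theory of Section \ref{sec:blowup}, the one-sided minimizing property of $\{S_t\}$, and the catenoid area defect.
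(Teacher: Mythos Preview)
Your proposal is correct and follows essentially the same route as the paper's proof: contradiction, Sharp compactness to get $\Sigma_k\to 2S_0^n$, the blow-up structure of Section~\ref{sec:blowup}, the Hausdorff bound from Theorem~\ref{thm:Hausdorff distance0}, neck-for-disk replacement plus the catenoid area defect~\eqref{eq:scaling difference}, and the one-sided minimizing property of the foliation to obtain~\eqref{eq:bound new surface}. One small imprecision: in ruling out multiplicity one you write that ``any smooth graph over $S_0^n$ that is minimal and $C^1$-close must coincide with $S_0^n$ by the maximum principle applied against the foliation,'' but the maximum principle against $\{S_t\}$ only shows such a graph must \emph{intersect} $S_0^n$ (Frankel), not that it equals $S_0^n$; the paper (and your earlier sentence) completes this by normalizing the graph function so that it subconverges to a nonzero \emph{constant} Jacobi field, forcing $\Sigma_k$ to lie strictly on one side of $S_0^n$ for large $k$ and thereby contradicting Frankel.
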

\begin{proof}
	Suppose not, then $\Sigma_k$ converges to a closed minimal hypersurface (with multiplicity) in $S^n_0\times \mb R$. By the monotonicity formula, the limit is $S^n_0\times\{0\}$ with multiplicity less than or equal to 2. We claim the multiplicity has to be two. So long as the the convergence has multiplicity one, the normalization of the difference between $S^n_0$ and $\Sigma_k$ converges to a non-trivial Jacobi field on $S^n_0$ (see \cite{Si87}). Note that such a Jacobi field is a constant. Thus $\Sigma_k$ lies on one side of $S^n_0$, which contradicts \eqref{item:cmc foliation} at the beginning of this section. Therefore, $\Sigma_k$ converges to $\Sigma$ with multiplicity two.
	
By \cite{Sharp17}, the convergence is smooth away from a set $\mc W$ containing at most two points since the index of $\Sigma_k$ is less than or equal to 2. Then $\{\Sigma_k\}$ and $\{M_k\}$ satisfy the conditions in Section \ref{sec:blowup} (see Example \ref{example}). Let $p\in\mc W$. By the argument therein, we can find $y_k(\in M_k)\to p$ and $r_k>0$ such that $r_k^{-1}(\Sigma_k-y_k)$ locally smoothly converges to a standard catenoid $\mc C\subset \mb R^{n+1}$, i.e. it has the center at 0 and 
	\[ \max_{x\in \mc C}|A(x)|=\sqrt {n(n-1)}. \]
By Remark \ref{rmk:structure of Sigmak}, here we have three possibilities:
	\begin{enumerate}
		\item\label{case:two separate points} $\mc W=\{p,q\}$ consists of two different points. In this case, there exists $z_k\to q$ and $\wti r_k$ such that $\wti r_k^{-1}(\Sigma_k-z_k)$ locally smoothly converges to a standard catenoid. Without loss of generality, we assume that $r_k\geq\wti r_k$ for all large $k$. 
		\item\label{case:one point} $\mc W=\{p\}$ consists of only one point and $b_k$ described in Proposition \ref{prop:2nd order estimates} is equal to 0. %i.e.  $\Sigma_k\cap A(y_k, 2r_k,\epsilon;M_k)$ has exactly two connected components. 
		It follows that $\Sigma_k\setminus B(y_k,2r_k;M_k)$ has exactly two connected components. In this case, we let $\wti r_k=0$.
		\item\label{case:multi 2 point} $\mc W=\{p\}$ consists of only one point and $b_k$ described in Proposition \ref{prop:2nd order estimates} is not 0, i.e. there exists $z_k\to p$, $|z_k-y_k|/r_k\to\infty$ and $\wti r_k>0$ such that $\wti r_k^{-1}(\Sigma_k-z_k)$ locally smoothly converges to a standard catenoid. Without loss of generality, we assume that $r_k\geq \wti r_k$ for large $k$.
	\end{enumerate}
	
	Above all, outside two small balls with radii $r_k$ and $\wti r_k$, $\Sigma_k$ has exactly two connected components; see Remark \ref{rmk:structure of Sigmak}. Moreover, we can take $R_k\to 0$ with $R_k/r_k\to\infty$, $r_k^{-1}(\Sigma_k-y_k)\cap B_{R_k/r_k}(0)$ is arbitrarily smoothly close to a standard catenoid $\mc C\cap B_{R_r/r_k}(0)$; if $z_k$ exists, we can also take $\wti R_k\to 0$ with $\wti R_k/\wti r_k\to\infty$ so that $\wti r_k^{-1}(\Sigma_k-z_k)\cap B_{R_k/r_k}(0)$ is arbitrarily close to a standard catenoid $\mc C\cap B_{R_k/r_k}(0)$.  

	Denote by $\gamma_k^1$ and $\gamma_k^2$ the two components of $\Sigma_k\cap \partial B(y_k,R_k;M_k)$. Then each $\gamma_k^j$ ($j=1$ or 2) bounds an area minimizing $n$-disk $\mc D_{k}^j$ in $M_k$. By Proposition \eqref{eq:area difference},
	\begin{equation}\label{eq:scaling difference}
	\Area(B(y_k,R_k;M_k)\cap \Sigma_k)-\sum_j\Area(\mc D_{k}^j)\geq \frac{\mc A_n}{2}r_k^n>0.
	\end{equation}
Here $\mc A_n$ ($n\geq 3$) is defined by \eqref{def:An} and $\mc A_2$ can be any fixed real numbers because of \eqref{eq:area difference}. Similarly, in Case \eqref{case:two separate points} or \eqref{case:multi 2 point}, there exist $\wti\gamma_k^j\subset \Sigma_k$ ($j=1,2$) surrounding $z_k$ such that each $\wti\gamma_k^j$ bounds an area minimizing $n$-disk $\wti{\mc D}_k^j$ in $M_k$ with 
	\begin{equation}\label{eq:bound small disks}
	\Area(B(z_k,\wti R_k;M_k)\cap \Sigma_k)-\sum_j\Area(\wti{\mc D}_k^j)\geq \frac{\mc A_n}{2}\wti r_k^n>0.
	\end{equation}
	Then we cut off $\Sigma_k\cap B(y_k,R_k;M_k)$ and $\Sigma_k\cap B(z_k,\wti R_k;M_k)$, which are close to a standard catenoid after scaling. After that, we add the $n$-disks $\{\mc D_{k}^j\}$ and $\{\wti{\mc D}_k^j \}$ to the new hypersurface; see Figure \ref{fig:replace}. This yields a closed hypersurface with two connected components $\Gamma_1$ and $\Gamma_2$ which are both homologous to $S^n_0$ in $M_k$. 
	
	\begin{figure}[h]
		\begin{center}
			\def\svgwidth{0.7\columnwidth}
			%% Creator: Inkscape 1.1.1 (3bf5ae0d25, 2021-09-20), www.inkscape.org
%% PDF/EPS/PS + LaTeX output extension by Johan Engelen, 2010
%% Accompanies image file '' (pdf, eps, ps)
%%
%% To include the image in your LaTeX document, write
%%   \input{<filename>.pdf_tex}
%%  instead of
%%   \includegraphics{<filename>.pdf}
%% To scale the image, write
%%   \def\svgwidth{<desired width>}
%%   \input{<filename>.pdf_tex}
%%  instead of
%%   \includegraphics[width=<desired width>]{<filename>.pdf}
%%
%% Images with a different path to the parent latex file can
%% be accessed with the `import' package (which may need to be
%% installed) using
%%   \usepackage{import}
%% in the preamble, and then including the image with
%%   \import{<path to file>}{<filename>.pdf_tex}
%% Alternatively, one can specify
%%   \graphicspath{{<path to file>/}}
%% 
%% For more information, please see info/svg-inkscape on CTAN:
%%   http://tug.ctan.org/tex-archive/info/svg-inkscape
%%
\begingroup%
  \makeatletter%
  \providecommand\color[2][]{%
    \errmessage{(Inkscape) Color is used for the text in Inkscape, but the package 'color.sty' is not loaded}%
    \renewcommand\color[2][]{}%
  }%
  \providecommand\transparent[1]{%
    \errmessage{(Inkscape) Transparency is used (non-zero) for the text in Inkscape, but the package 'transparent.sty' is not loaded}%
    \renewcommand\transparent[1]{}%
  }%
  \providecommand\rotatebox[2]{#2}%
  \newcommand*\fsize{\dimexpr\f@size pt\relax}%
  \newcommand*\lineheight[1]{\fontsize{\fsize}{#1\fsize}\selectfont}%
  \ifx\svgwidth\undefined%
    \setlength{\unitlength}{447.87266929bp}%
    \ifx\svgscale\undefined%
      \relax%
    \else%
      \setlength{\unitlength}{\unitlength * \real{\svgscale}}%
    \fi%
  \else%
    \setlength{\unitlength}{\svgwidth}%
  \fi%
  \global\let\svgwidth\undefined%
  \global\let\svgscale\undefined%
  \makeatother%
  \begin{picture}(1,0.50519223)%
    \lineheight{1}%
    \setlength\tabcolsep{0pt}%
    \put(0,0){\includegraphics[width=\unitlength]{Replacing_catenoids_by_two_disks.eps}}%
    \put(0.08352695,0.27940464){\color[rgb]{0,0,0}\makebox(0,0)[lt]{\lineheight{1.25}\smash{\begin{tabular}[t]{l}$\Sigma_k^2$\end{tabular}}}}%
    \put(0.07235359,0.22905447){\color[rgb]{0,0,0}\makebox(0,0)[lt]{\lineheight{1.25}\smash{\begin{tabular}[t]{l}\\$\Sigma_k^1$\end{tabular}}}}%
    \put(0.50650327,0.2851127){\color[rgb]{0,0,0}\makebox(0,0)[lt]{\lineheight{1.25}\smash{\begin{tabular}[t]{l}$\mathcal D_k^2$\end{tabular}}}}%
    \put(0.51546407,0.22012273){\color[rgb]{0,0,0}\makebox(0,0)[lt]{\lineheight{1.25}\smash{\begin{tabular}[t]{l}$\mathcal D_k^1$\end{tabular}}}}%
    \put(0.49498771,0.175391){\color[rgb]{0,0,0}\makebox(0,0)[lt]{\lineheight{1.25}\smash{\begin{tabular}[t]{l}$B(y_k,R_k;M_k)$\end{tabular}}}}%
    \put(0.45005556,0.02955663){\color[rgb]{0,0,0}\makebox(0,0)[lt]{\lineheight{1.25}\smash{\begin{tabular}[t]{l}$B(y_k,\epsilon;M_k)$\end{tabular}}}}%
    \put(0.82731643,0.1848111){\color[rgb]{0,0,0}\makebox(0,0)[lt]{\lineheight{1.25}\smash{\begin{tabular}[t]{l}$S_0^n$\end{tabular}}}}%
  \end{picture}%
\endgroup%

			\caption{Replacing the catenoids by two $n$-disks.}
			\label{fig:replace}
		\end{center}
	\end{figure}
	
        Note that Theorem \ref{thm:Hausdorff distance0} can be applied to obtain
	\[ \max_{x\in \Sigma_k}\dist_{M_k}(x,S_0^n)\leq 8r_k|\log r_k|; \]
	together with a standard minimal foliation argument (applied to $B(y_k, \epsilon; M_k)$), this implies that 
	\[   \max_{x\in \Gamma_j}\dist_{M_k}(x,S_0^n)\leq 10r_k|\log r_k|. \]
	Let $\mk d_k:=10r_k |\log r_k|$. Then by the one-sided minimizing property of $S_{\mk d_k}$ and $S_{-\mk d_k}$,
	\[  \Area(\Gamma_j)+\Area(S^n_0)>\Area(S_{\mk d_k})+\Area(S_{-\mk d_k}).   \]
	By \eqref{eq:bound St},
	\[  \Area(S_{\mk d_k})\geq \Area(S_0^n)-\Omega_n|\mk d_k|^{2n} \ \ \ \text{ and } \ \ \ \Area(S_{-\mk d_k})\geq \Area(S_0^n)-\Omega_n|\mk d_k|^{2n}; \]
	these imply that
	\begin{equation}\label{eq:bound new surface}
	\Area(\Gamma_1)+\Area(\Gamma_2)>2\Area(S_0^n)-4\Omega_n \mk d_k^{2n}. 
	\end{equation}
	By the construction of $\Gamma_j$,
	\begin{align*}
	\Area(\Sigma_k)&=\sum_{j=1}^2\Big(\Area(\Gamma_j)-\Area(\mc D_{k}^j)-\Area(\wti {\mc D}_k^j)\Big)+\Area(B(y_k,R_k;M_k)\cap \Sigma_k)\\
	& \ \ +\Area(B(z_k,\wti R_k;M_k)\cap \Sigma_k)\\
	&\geq \sum_{j=1}^2\Big(\Area(\Gamma_j)-\Area(\mc D_{k}^j)\Big)+\Area(B(y_k,R_k;M_k)\cap \Sigma_k)\\
	&> 2\Area(S_0^n)-10^{2n}r_k^{2n-1}\cdot (4\Omega_nr_k |\log r_k|^{2n})+\frac{\mc A_n}{2}r_k^n\\
	&>2\Area(S^n_0)+\frac{\mc A_n}{4}r_k^n.
	\end{align*}
	Here the first inequality is from \eqref{eq:bound small disks}; we used \eqref{eq:scaling difference} and \eqref{eq:bound new surface} in the second one; the last one follows from the fact that $r_k|\log r_k|^{2n}\to 0$ as $k\to \infty$. This gives a contradiction and we finish the proof of Theorem \ref{thm:rigidity}.
\end{proof}

Now we are going to prove our main results.
\begin{theorem}\label{thm:Mk}
	For sufficiently large $k$, the second width $\omega_2(M_k)$ can only be realized by $S_0^n$ with multiplicity two. 	
\end{theorem}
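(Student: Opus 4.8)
The plan is to combine the lower bounds for the second width $\omega_2(M_k)$ coming from the Almgren--Pitts min-max theory with the rigidity statement already established in Theorem~\ref{thm:rigidity}. First I would record the basic input: by the work of Marques--Neves \cite{MN16}, the second width $\omega_2(M_k)$ is realized by an integral varifold $V_k$ whose support is a smooth closed embedded minimal hypersurface $\Sigma_k$ (since $3\leq n+1\leq 7$) with $\Index(\spt V_k)\leq 2$, and $\|V_k\| = \omega_2(M_k)$. The crucial a priori upper bound is $\omega_2(M_k)\leq 2\omega_1(M_k)\leq 2\Area(S_0^n)$; the first inequality is the general sub-additivity/Lusternik--Schnirelmann type inequality for the volume spectrum, and the second holds because $S_0^n$ is a minimal hypersurface in $M_k$ and the one-parameter sweepout by the foliation $\{S_t\}$ (property~\eqref{item:cmc foliation}) shows $\omega_1(M_k)\leq \max_t\Area(S_t)=\Area(S_0^n)$ by property~\eqref{eq:bound St}.

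Next I would feed this into the structural results. Write $V_k = m_k|\Sigma_k|$ on each connected component; since each component is homologous to $S_0^n$ and the foliation $\{S_t\}$ forces any embedded minimal hypersurface to meet $S_0^n$, together with the area bound $\|V_k\|\leq 2\Area(S_0^n)$, the varifold $V_k$ has total mass at most $2\Area(S_0^n)$. If $\spt V_k$ is not $S_0^n$, then since $\Area(\spt V_k)\leq \|V_k\|\leq 2\Area(S_0^n)$, Theorem~\ref{thm:rigidity} applies directly and yields that for large $k$, $\spt V_k = S_0^n$ — contradiction. Hence $\spt V_k = S_0^n$, and $V_k = m_k|S_0^n|$ for some positive integer $m_k$. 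Since $\Area(S_0^n) < \|V_k\| = \omega_2(M_k)\leq 2\Area(S_0^n)$ — the strict lower inequality because $\omega_1 < \omega_2$ would need justification, or more robustly because $\omega_2(M_k)$ cannot equal $\omega_1(M_k)=\Area(S_0^n)$ as a multiplicity-one $S_0^n$ sweeps out only one parameter's worth — we conclude $m_k = 2$, i.e. $\omega_2(M_k)$ is realized by $S_0^n$ with multiplicity two.

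The main obstacle, and the step requiring the most care, is the passage ``$\omega_2(M_k)$ is realized $\Rightarrow$ there is a smooth minimal hypersurface $\Sigma_k$ with $\Index\leq 2$ and $\Area\leq 2\Area(S_0^n)$ to which Theorem~\ref{thm:rigidity} applies.'' Concretely I must (i) invoke the regularity and index bounds of the Almgren--Pitts/Marques--Neves two-parameter min-max construction to produce $\Sigma_k$, being careful that the index bound $\Index\leq 2$ is the one from \cite{MN16} that holds for the min-max hypersurface counted without multiplicity; (ii) rule out the possibility that $\spt V_k$ has more than one component carrying distinct multiplicities, which again follows because every component is homologous to and hence intersects $S_0^n$, so two disjoint components are impossible and $\spt V_k$ is connected; and (iii) handle the borderline case where $\Area(\Sigma_k) = 2\Area(S_0^n)$ exactly, which is permitted in the hypothesis of Theorem~\ref{thm:rigidity}. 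I would also need to confirm the strict inequality $\omega_2(M_k) > \omega_1(M_k)$, or bypass it: if $m_k=1$ then $V_k=|S_0^n|$ has mass $\Area(S_0^n)=\omega_1(M_k)$, but a multiplicity-one $S_0^n$ is the $\omega_1$ min-max hypersurface and cannot be the $\omega_2$ one because the two-parameter sweepout has strictly larger width — this is where one uses that the foliation gives $\omega_1(M_k)=\Area(S_0^n)$ while a genuine two-parameter family must exceed this (for instance via the Lusternik--Schnirelmann inequality $\omega_2 \geq \omega_1$ being strict when $\omega_1$ is attained by a hypersurface that does not support a two-dimensional family, or simply by noting $\omega_2(M_k)\geq \Area(S_0^n) + \eta$ for some $\eta>0$ via a direct sweepout estimate). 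Once this is in place, the theorem follows immediately from Theorem~\ref{thm:rigidity}.
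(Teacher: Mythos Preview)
Your approach is essentially the paper's: invoke Marques--Neves \cite{MN16} together with the Frankel property to get a connected minimal $\Sigma_k$ with $\Index(\Sigma_k)\leq 2$, use $\omega_2(M_k)\leq 2\omega_1(M_k)=2\Area(S_0^n)$, and then apply Theorem~\ref{thm:rigidity} to force $\Sigma_k=S_0^n$. Your route to the area bound for $\Sigma_k$ is in fact slightly more direct than the paper's: you note $\Area(\spt V_k)\leq\|V_k\|\leq 2\Area(S_0^n)$ and feed this straight into Theorem~\ref{thm:rigidity}, whereas the paper first appeals to the Multiplicity One Theorem \cite{Zhou19} (if $\Sigma_k\neq S_0^n$ then it is unstable, hence has multiplicity one) before reaching the same inequality. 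For this particular step your shortcut is valid.

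The genuine gap is your determination that $m_k=2$. You correctly isolate the issue as proving $\omega_2(M_k)>\Area(S_0^n)$, but your proposed justifications do not work: there is no abstract principle forcing $\omega_1<\omega_2$ (on the round sphere they coincide), the heuristic that ``a multiplicity-one $S_0^n$ sweeps out only one parameter's worth'' is not a theorem, and the ``direct sweepout estimate'' is left unspecified. The paper resolves this with a single clean observation you are missing: since $M_k\to S_0^n\times\mb R$ locally smoothly, one has
\[
\lim_{k\to\infty}\omega_2(M_k)=\omega_2(S_0^n\times\mb R)=2\Area(S_0^n);
\]
see~\eqref{eq:limit of omegak}. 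This gives $\omega_2(M_k)>\Area(S_0^n)$ for all large $k$, so $m_k\geq 2$, and combined with $m_k\Area(S_0^n)=\omega_2(M_k)\leq 2\Area(S_0^n)$ forces $m_k=2$. Replace your Lusternik--Schnirelmann speculation with this limit and the proof is complete.
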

\begin{proof}
	By the work of Marques-Neves \cite{MN16}, together with the Frankel's property \eqref{item:cmc foliation}, %at the beginning of this section,  
	$\omega_2(M_k)$ is realized by the area of a connected, closed, embedded, minimal hypersurface $\Sigma_k$ with integer multiplicities and $\Index(\Sigma_k)\leq 2$. Observe that 
	\begin{equation}\label{eq:limit of omegak}
	 \lim_{k\to\infty}\omega_2(M_k)=\omega_2(S_0^n\times \mb R)=2\Area(S_0^n). 
	 \end{equation}
	
	We first prove that $\Sigma_k$ is $S_0^n$. Suppose not, then $\Sigma_k$ is unstable by Property \eqref{item:others unstable}; %by the construction of $M_k$; 
	therefore using the direct corollary of the Multiplicity One Theorem in \cite{Zhou19}, $\Sigma_k$ has multiplicity one, i.e. $\omega_2(M_k)=\Area(\Sigma_k)$. On the other hand, by the construction of optimal 1-sweepouts (c.f. \cite{Zhou15}), $\omega_1(M_k)=\Area(S_0^n)$. Then it follows that (c.f. \cite{MN17}*{Proof of Theorem 5.1}).
	\begin{equation}
	\omega_2(M_k)\leq 2\omega_1(M_k)=2\Area(S_0^n).
	\end{equation}
	By Theorem \ref{thm:rigidity}, $\Sigma_k$ has to be identical to $S_0^n$, but this contradicts \eqref{eq:limit of omegak}. Thus we conclude that $\Sigma_k$ has to be $S_0^n$. Then the multiplicity follows from \eqref{eq:limit of omegak}.

	Therefore, $\omega_2(M_k)$ can only be realized by $S_0^n$ with multiplicity two. 	
\end{proof}
 
Let $M_k$ be the Riemannian $(n+1)$-sphere in Theorem \ref{thm:Mk}. Denote by $N_k$ and $\mb{RP}^n_0$ the quotient spaces $M_k/\{x\sim -x\}$ and $S_0^n/\{x\sim-x\}$, respectively. Then we have the following properties:
\begin{enumerate}
	\item for each $k$, $N_k$ satisfies the Frankel's property, i.e. any two embedded minimal hypersurfaces intersect;
	\item $\mb{RP}^n_0$ is a one-sided embedded minimal hypersurface in $N_k$;
	\item as $k\to\infty$, $N_k$ locally smoothly converges to $S_0^n\times \mb R/\{x\sim-x\}$;
	\item $\omega_1(S_0^n\times \mb R/\{x\sim -x\})=\Area(S_0^n)=2\Area(\mb {RP}^n_0)$.
\end{enumerate}	
Here the first three items follow from the Properties (\ref{item:Ric geq 0}--\ref{item:cmc foliation}) of $M_k$ at the beginning of this section. For the last one, since $S_0^n\times \mb R/\{x\sim -x\}$ has a minimal foliation, it follows that 
\[  \omega_1(S_0^n\times \mb R/\{x\sim -x\}) \leq \Area(S_0^n)=2\Area(\mb {RP}^n_0). \]
On the other hand, as the limit contains $S_0^n \times (0,\infty)$, it follows that 
\[  \omega_1(S_0^n\times \mb R/\{x\sim -x\}) \geq \omega_1(S_0^n\times \mb R)= \Area(S_0^n).   \]
Hence the last property is proved.

	 The following result directly implies Corollary \ref{cor:one-sided}.
\begin{corollary}\label{cor:Nk}
For sufficiently large $k$, the first width $\omega_1(N_k)$ can only be realized by $\mb{RP}_0^n$ with multiplicity two. 
\end{corollary}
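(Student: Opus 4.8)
The plan is to transfer the problem to the double cover $\pi\colon M_k\to N_k$ and then argue exactly as in the proof of Theorem \ref{thm:Mk}, using the rigidity Theorem \ref{thm:rigidity}. By Marques--Neves \cite{MN16} and the Frankel property (item (1) above), $\omega_1(N_k)$ is realized by a connected, closed, embedded minimal hypersurface $\Gamma_k\subset N_k$ with integer multiplicity $m_k\ge 1$ and $\Index(\Gamma_k)\le 1$, so that $\omega_1(N_k)=m_k\Area(\Gamma_k)$. The construction supplies two facts about this number. First, by item (4) and the continuity of the width under the convergence in item (3) --- exactly as $\lim_k\omega_2(M_k)=\omega_2(S_0^n\times\mb R)$ is invoked in \eqref{eq:limit of omegak} --- one has $\lim_{k\to\infty}\omega_1(N_k)=2\Area(\mb{RP}^n_0)=\Area(S_0^n)$. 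Second, the one-parameter family $\{\pi(S_t)\}_{t\in[0,a]}$ is a detecting $1$-sweepout of $N_k$: its Almgren class in $H_{n+1}(N_k;\mb Z_2)$ is the fundamental class $[N_k]$ because the slices $\pi(S_t)$ sweep out $N_k$ once, and its largest slice --- attained at $t=0$ --- is the multiplicity-two varifold $2\,\mb{RP}^n_0$, of mass $\Area(S_0^n)$. (The slice at $t=0$ is the zero $\mb Z_2$-cycle, yet it carries mass $\Area(S_0^n)$ precisely because $\mb{RP}^n_0$ is one-sided; this is also why the answer will have multiplicity two.) Hence $\omega_1(N_k)\le\Area(S_0^n)$ for every $k$.

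Put $\wti\Gamma_k:=\pi^{-1}(\Gamma_k)$, a closed embedded minimal hypersurface in $M_k$. Since $\pi$ is a Riemannian double cover, $\Area(\wti\Gamma_k)=2\Area(\Gamma_k)=2\omega_1(N_k)/m_k\le 2\omega_1(N_k)\le 2\Area(S_0^n)$. Moreover $\wti\Gamma_k$ develops at most two catenoid necks, so that it fits the framework of Section \ref{sec:blowup} (in particular $\Index(\wti\Gamma_k)\le 2$): by \cite{Sharp17} the convergence $\Gamma_k\to\mb{RP}^n_0$ is smooth away from at most $\Index(\Gamma_k)\le 1$ point, and that single point lifts under the two-to-one map $\pi$ to at most two points of $M_k$; away from them $\wti\Gamma_k$ is a smooth multiple graph over $S_0^n$. (When $\Gamma_k$ is one-sided, $\wti\Gamma_k$ is the connected two-sided double cover of $\Gamma_k$ that carries the full min-max index of the varifold $m_k|\Gamma_k|$, again at most $1$.) Therefore Theorem \ref{thm:rigidity} applies to $\wti\Gamma_k$, and for all sufficiently large $k$, $\wti\Gamma_k=S_0^n$; hence $\Gamma_k=\pi(S_0^n)=\mb{RP}^n_0$.

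Finally, $\Gamma_k=\mb{RP}^n_0$ gives $\omega_1(N_k)=m_k\Area(\mb{RP}^n_0)$, and since $\omega_1(N_k)\to 2\Area(\mb{RP}^n_0)$ and $m_k$ is a positive integer, we must have $m_k=2$ (and $\omega_1(N_k)=2\Area(\mb{RP}^n_0)=\Area(S_0^n)$) for all large $k$. Thus $\omega_1(N_k)$ can only be realized by $\mb{RP}^n_0$ with multiplicity two.

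The main difficulty is the covering-space bookkeeping underlying the two width bounds and the index bound for $\wti\Gamma_k$. For the sweepout one must handle carefully the difference between $\mb Z_2$-flat convergence of $\pi(S_t)$ to the zero cycle as $t\to 0$ and $\mathbf F$-convergence to the varifold $2\,\mb{RP}^n_0$ of mass $\Area(S_0^n)$ --- the one-sided phenomenon, and the mechanism that produces multiplicity two. Dually, the bound $\Index(\wti\Gamma_k)\le 2$ must exclude the possibility that $\Gamma_k$ is two-sided but does not lift to $M_k$, in which case $\wti\Gamma_k$ is a connected double cover whose Jacobi operator could acquire uncontrolled anti-invariant eigenvalues; one resolves this by noting that the $\omega_1$-min-max hypersurface is homologically essential (its class is $[\mb{RP}^n]$), hence one-sided, so that $\wti\Gamma_k$ is genuinely its normal-orientation double cover and inherits the min-max index bound. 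Granting these points, the remainder is a verbatim copy of the proof of Theorem \ref{thm:Mk}.
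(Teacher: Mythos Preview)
Your strategy of lifting $\Gamma_k$ to $\wti\Gamma_k\subset M_k$ and invoking Theorem~\ref{thm:rigidity} as a black box has a real gap: the hypothesis $\Index(\wti\Gamma_k)\le 2$ is not established. Counting catenoid necks bounds the index from \emph{below} (each neck contributes one), not from above, so ``at most two necks'' does not give ``index at most two''. Your fallback claim that $\Gamma_k$ must be one-sided because it is ``homologically essential with class $[\mb{RP}^n]$'' is unproven --- min-max hypersurfaces are in general not homologically constrained in this way --- and without it the problematic scenario remains open: if $\Gamma_k$ is two-sided and its lift $\wti\Gamma_k$ is connected (which Frankel in $M_k$ in fact forces), the anti-invariant part of the Jacobi operator on $\wti\Gamma_k$ is not controlled by $\Index(\Gamma_k)\le 1$, and Theorem~\ref{thm:rigidity} cannot be applied.

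The paper avoids this by first using compactness and Frankel to pin down the limit $\Gamma_k\to\mb{RP}^n_0$, and then splitting on the min-max multiplicity $m_k\in\{1,2\}$. When $m_k=2$ the convergence has multiplicity one, so Allard gives smooth convergence; the lift $\wti\Gamma_k$ then converges smoothly to $S_0^n$ and must equal $S_0^n$, with no index hypothesis on $\wti\Gamma_k$ needed. When $m_k=1$ one has $\Gamma_k\to 2\,\mb{RP}^n_0$ away from at most one point (from $\Index(\Gamma_k)\le 1$), and the paper reruns the \emph{proof} of Theorem~\ref{thm:rigidity} for this single-neck configuration to obtain $\Area(\Gamma_k)>2\Area(\mb{RP}^n_0)$, contradicting the sweepout bound $\omega_1(N_k)\le 2\Area(\mb{RP}^n_0)$. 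In that step the blowup analysis of Section~\ref{sec:blowup} is driven by the downstairs bound $\Index(\Gamma_k)\le 1$, so an index bound on the lift is never required.
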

\begin{proof}
Notice that $N_k$ also satisfies the Frankel's property. Then by Marques-Neves \cite{MN16}, $\omega_1(N_k)$ is realized by the area of a connected, closed, embedded, minimal hypersurface $\Gamma_k$ with integer multiplicity $m_k$ and $\Index(\Gamma_k)\leq 1$. Observe that 
\begin{equation}\label{eq:limit of omegak:one-sided}
\lim_{k\to\infty}\omega_1(N_k)=\omega_1(S_0^n\times \mb R/\{x\sim -x\})=2\Area(\mb {RP}_0^n). 
\end{equation}
It follows that $\Area(\Gamma_k)$ is uniformly bounded. Then by compactness \cite{Sharp17}, $\Gamma_k$ converges to an embedded minimal hypersurface $\Gamma\subset S_0^n\times \mb R/\{x\sim -x\}$. Notice that such a space is foliated by minimal hypersurfaces. Thus $\Gamma$ is $S_0^n\times\{t\}$ or $\mb {RP}^n_0$. Recall that $\Gamma_k$ intersects $\mb {RP}^n_0$. Hence we conclude that the limit is $\mb {RP}^n_0$. Together with \eqref{eq:limit of omegak:one-sided}, we then have that $m_k\leq 2$.

\medskip
{\noindent\em  Case I: $m_k=1$.}
\medskip

Then $\Gamma_k$ locally smoothly converges to $2\mb{RP}^n_0$ away from at most one point. Then by the same argument as in Theorem \ref{thm:rigidity}, 
\[ \Area(\Gamma_k)>2\Area(\mb{RP}^n_0),   \]
which contradicts $\omega_1(N_k)\leq 2\Area(\mb{RP}^n_0)$. 
%that 
%\[ \omega_1(N_k)\leq 2\Area(\mb{RP}^n_0).  \]

\medskip
{\noindent\em  Case II: $m_k=2$.}
\medskip

Then $\Gamma_k$ converges to $\mb{RP}^n_0$ with multiplicity one. By Allard's regularity, the convergence is smooth. Let $\wti\Gamma_k\in M_k$ be the double cover of $\Gamma_k$. Then $\wti \Gamma_k$ smoothly converges to $S_0^n$. This implies that $\wti \Gamma_k$ is identical to $S_0^n$. Hence $\Gamma_k$ is identical to $\mb{RP}^n_0$. This completes the proof of Corollary \ref{cor:Nk}.
	\end{proof}

\section{Upper bounds for the Hausdorff distance}\label{sec:hasdorff distance}
We use the notation in Section \ref{sec:blowup}. Recall that $\Sigma_k\subset M_k^{n+1}$ is a closed embedded minimal hypersurface such that the Morse index is bounded above by two and the area is uniformly bounded independent of $k\in \mb N$. As $k\to\infty$, $M_k$ converges locally smoothly to the product space $S_0^n\times \mb R$, and $\Sigma_k$ converges to a minimal $n$-sphere in the limit space of $M_k$, namely $S_0^n\times \mb R$. The convergence of $\Sigma_k$ is locally smooth away from at most two points according to the Morse index and area bounds. At a point near which the convergence is not smooth, the limit cones are either planes or catenoids by Proposition \ref{prop:limit cones}. For simplicity, we use $A$ for $A^{\Sigma_k}$ and sometimes omit the subscription $k$ when there is no ambiguity. Denote by $\nabla$ and $\wti \nabla$ the Levi-Civita connections of $\Sigma_k$ and $M_k$, respectively.

In this section, we prove the key height estimates, which says that the Hausdorff distance between $\Sigma_k$ and $S^n_0$ is bounded by a quantity associated with the catenoids arising from blowups. This result is essentially used in the previous section (see Theorem \ref{thm:Hausdorff distance0}).

Recall that by Remark \ref{rmk:structure of Sigmak}, $\Sigma_k$ has two connected components (denoted by $\Sigma_k^1$ and $\Sigma_k^2$) by removing one or two small catenoids. Denote by $y_k, z_k$ the centers and $r_k,\wti r_k$ the radii of links of such catenoids (we used $y_{k,j}$ and $r_{k,j}$ in Section \ref{sec:blowup} for general cases). Without loss of generality, we assume that $r_k\geq \wti r_k$.

Then by Proposition \ref{prop:2nd order estimates}, $\Sigma_k^2$ is a minimal graph over $\Sigma_k^1$. Let $\n$ be the unit normal vector field of $\Sigma_k^1$ , and let $\rho$ and $\wti \rho$ be the distance functions to $y_k$ and $z_k$ in $M_k$. Denote by 
\begin{equation}\label{eq:phi and eta}
 \bm\eta=\nabla \rho/|\nabla \rho|,\quad  \wti{\bm\eta}=\nabla \wti \rho/|\nabla \wti \rho|; \quad \phi=|\nabla \rho|, \quad \wti \phi=|\nabla \wti \rho|.
 \end{equation} 
Recall that $b_k$, defined in ~\eqref{eq:b_k}, is the distance between the two blowup points $y_k, z_k$. For any 
\begin{equation}\label{eq:good s}
\epsilon\geq s\geq 2b_k>0 \ \ \text{  or } \ \ \ b_k/2\geq s\geq 4r_k,
\end{equation}
we set  %\textcolor{red}{(maybe $\Gamma_s$ is a better notion as it can have dim $>1$?)}
\begin{equation}
\label{eq:gamma_s}
\gamma_{s}=\Sigma_k^1\cap\partial B(y_k,s;M_k).
\end{equation}

In the remaining part of this section, we assume that $R_k,\wti R_k\to 0$ are two sequence of real numbers satisfying $R_k/r_k\to \infty$ and $\wti R_k/\wti r_k\to\infty$. Moreover, we can also assume $r_k^{-1}(\Sigma_k\cap B(y_k,R_k;M_k)-y_k)$ (resp. $\wti r_k^{-1}(\Sigma_k\cap B(z_k,\wti R_k;M_k)-y_k)$) is arbitrarily close to the catenoid $\mc C\cap B_{R_k/r_k}(0)$ (resp. $\mc C\cap B_{\wti R_k/\wti r_k}(0)$) in the smooth topology. By Proposition \ref{prop:limit cones}, for sufficiently large $k$, $s^{-1}(\gamma_s-y_k)$ is very close to the unit $(n-1)$-sphere in the smooth topology. In particular, we have
\begin{equation}\label{eq:gammas round}  (1-\epsilon^2)\Omega_{n-1}s^{n-1}<|\gamma_s|<(1+\epsilon^2)\Omega_{n-1}s^{n-1},  
\end{equation}
where $\Omega_m$ is the volume of unit $m$-spheres.
Recall that $w_k$ is the graph function of $\Sigma_k^2$ over $\Sigma_k^1$.

\begin{lemma}\label{lem:laplace wk}
Given any $\delta>0$, then for all sufficiently large $k$, we have
\begin{equation}\label{eq:upper bound for laplace wk:general}
  \Delta w_k\leq  w_k+C(n)\delta\cdot \frac{w_k^3}{d^4_k(x)}
 \end{equation}
for $x\in\Sigma_k^1\setminus \big[B(y_k,R_k;M_k)\cup B(z_k,\wti R_k;M_k)\big]$.	
In particular, 
\begin{enumerate}
	\item for all large $k$ and $x\in \Sigma_k^1\cap A(y_k,R_k,\epsilon;M_k)\setminus B(z_k,b_k/2;M_k)$,
\begin{equation}\label{eq:upper bound for laplace wk}
 \Delta w_k\leq  w_k+\frac{1}{8}\cdot \frac{w_k^3}{|\rho(x)|^4} ;   
 \end{equation}
\item for all large $k$ and $x\in\Sigma_k^1\cap A(z_k,R_k,b_k/2;M_k)$,
\[ \Delta w_k\leq w_k+\frac{1}{8}\cdot \frac{w_k^3}{|\wti \rho(x)|^4} .   \]
\end{enumerate} 	
\end{lemma}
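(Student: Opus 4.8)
The plan is to treat $\Sigma_k^2$ as a minimal graph over $\Sigma_k^1$ with graph function $w_k$ (with respect to the normal $\n$), and to derive the governing PDE for $w_k$ by specializing the general minimal-graph inequality developed in Appendix \ref{sec:graph functions}. On a fixed minimal hypersurface $N$, the linearization of the minimal-surface operator at the zero graph is the Jacobi operator $\Delta_N + |A_N|^2 + \Ric_M(\n,\n)$, so that a minimal graph $w$ over $N$ satisfies an equation of the schematic form $\Delta_N w = -(|A_N|^2 + \Ric_M(\n,\n))\, w + Q(w, \nabla w, \nabla^2 w)$, where $Q$ collects the quadratic-and-higher remainder terms and is controlled by $|w|\,(|\nabla^2 w| + |\nabla w|^2) + (\text{curvature})\cdot(|w||\nabla w| + |w|^3 + \dots)$. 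I would first record this identity for $N = \Sigma_k^1$, keeping careful track of which geometric quantities appear — $|A^{\Sigma_k^1}|$, $|\nabla A^{\Sigma_k^1}|$, $\Ric_{M_k}$ and its covariant derivatives, and the derivatives of $w_k$ — since everything ultimately has to be estimated in terms of $d_k$.

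Next I would feed in the quantitative smallness from Section \ref{sec:blowup}. On the region $x \in \Sigma_k^1 \setminus [B(y_k,R_k;M_k) \cup B(z_k,\wti R_k;M_k)]$, away from the catenoid necks and at scale comparable to $d_k$, Proposition \ref{prop:2nd order estimates}\eqref{item:A bound}--\eqref{item:der bound} gives $d_k|A^{\Sigma_k}| + d_k^2|\nabla A^{\Sigma_k}| < \delta$ and $d_k^2 |\nabla^2 w_k|/w_k + d_k |\nabla w_k|/w_k + w_k/d_k < \delta$; since $\Sigma_k^1$ differs from $\Sigma_k$ only by the removed necks, the same bounds hold for $\Sigma_k^1$. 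Because $M_k \to S_0^n \times \mb R$ locally smoothly and $S_0^n$ is Ricci-stable with constant Jacobi fields, the ``linear'' coefficient $|A^{\Sigma_k^1}|^2 + \Ric_{M_k}(\n,\n)$ is close to the corresponding quantity on $S_0^n$; the key normalization is that on $S_0^n$ this operator is exactly $\Delta_{S_0^n} + n$ acting on functions, and constants lie in the kernel of $\Delta_{S_0^n} + n - (\text{something})$ — more precisely I would use that the requirement ``$S_0^n$ stable with constant Jacobi fields'' forces $|A^{S_0^n}|^2 + \Ric(\n,\n) = 1$ after the relevant normalization, so the leading term is $\Delta w_k \le w_k + (\text{errors})$. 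Every remainder term in $Q$ carries at least one factor of $w_k$ beyond the linear part and at least the ``expected'' number of $d_k$'s in the denominator; dimensionally each such term is at most $C(n)\delta \cdot w_k^3 / d_k^4$ after using the three smallness estimates above to convert $|\nabla w_k|, |\nabla^2 w_k|, |A|, |\nabla A|$ into powers of $w_k/d_k$ and $\delta/d_k$. This yields \eqref{eq:upper bound for laplace wk:general}.

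The two itemized consequences are then bookkeeping: on $\Sigma_k^1 \cap A(y_k, R_k, \epsilon; M_k) \setminus B(z_k, b_k/2; M_k)$ one has $d_k(x) = \dist_{M_k}(x, \{y_k,z_k\})$ comparable to $\rho(x) = \dist_{M_k}(x, y_k)$ — indeed $z_k$ is at distance $b_k$ from $y_k$, and outside $B(z_k,b_k/2)$ the point is no closer to $z_k$ than to $y_k$ up to a factor $2$, so $d_k(x) \ge \frac12 \rho(x)$ (and $d_k \le \rho$), whence $C(n)\delta\, w_k^3/d_k^4 \le 16 C(n)\delta\, w_k^3/\rho^4 \le \frac18 w_k^3/\rho^4$ for $k$ large enough that $\delta$ is small. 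The second item is symmetric, on $A(z_k, R_k, b_k/2; M_k)$ one has $d_k$ comparable to $\wti\rho$. I expect the main obstacle to be the first step: carefully deriving the nonlinear graph equation and organizing the remainder $Q$ so that \emph{every} error term genuinely carries the factor $w_k^3/d_k^4$ (or better) — the subtlety flagged in the introduction is exactly that $w_k$ is a graph over the moving hypersurface $\Sigma_k^1$ rather than over a fixed one, so terms involving $\nabla A^{\Sigma_k^1}$, the second fundamental form of $\Sigma_k^1$ contracted against $\nabla w_k$, and the ambient curvature of $M_k$ restricted to $\Sigma_k^1$ all appear and must each be absorbed; getting the powers of $d_k$ right in each of these, using Proposition \ref{prop:2nd order estimates} and the local smooth convergence $M_k \to S_0^n \times \mb R$, is where the real work lies, while the passage to \eqref{eq:upper bound for laplace wk} itself is elementary.
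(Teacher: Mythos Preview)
Your overall architecture is right: specialize the minimal-graph inequality \eqref{eq:minimal graph equation} from Appendix~\ref{sec:graph functions} with $\mc N=\Sigma_k^1$, $u=w_k$, and feed in Proposition~\ref{prop:2nd order estimates}. The deduction of the two itemized consequences from \eqref{eq:upper bound for laplace wk:general} is also correct (the paper gets $d_k\ge\tfrac14\rho$ rather than your $\tfrac12\rho$, because when $b_k/2<\rho<2b_k$ one only has $\wti\rho\ge b_k/2>\rho/4$, but this is harmless).

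There is, however, a genuine gap in your treatment of the nonlinear remainder. Your plan is to substitute $|A|\le\delta/d_k$, $|\nabla w_k|\le\delta w_k/d_k$, $|\nabla^2 w_k|\le\delta w_k/d_k^2$ directly into every term of $Q$ and claim the result is $\le C(n)\delta\,w_k^3/d_k^4$. This fails for the cross terms $8|A|\,|\nabla w_k|^2$ and $|\nabla^2 w_k|\,|A|\,w_k$ (and similarly $|A|^3w_k^2$): direct substitution gives quantities of order $\delta^3 w_k^2/d_k^3$ and $\delta^2 w_k^2/d_k^3$, and $w_k^2/d_k^3$ is \emph{not} controlled by $w_k+w_k^3/d_k^4$. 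Indeed, taking $w_k\sim d_k^{3/2}$ (compatible with $w_k/d_k<\delta$ when $d_k$ is small) makes $w_k^2/d_k^3\sim 1$ while $w_k+w_k^3/d_k^4\to 0$. Since $d_k$ ranges down to $R_k\to 0$, this regime is unavoidable.

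The paper's fix is the pair of Young inequalities
\[
8|A|\,|\nabla w_k|^2\le \tfrac12|A|^2w_k+\frac{32|\nabla w_k|^4}{w_k},\qquad
|\nabla^2 w_k|\,|A|\,w_k\le \delta|A|^2w_k+\tfrac{1}{4\delta}|\nabla^2 w_k|^2 w_k,
\]
together with $|A|^3w_k^2=|A|^2w_k\cdot(|A|w_k)\le\delta^2|A|^2w_k$. The total $|A|^2w_k$ produced on the right has coefficient $\tfrac12+\delta+\delta^2<1$ and is therefore absorbed by the $|A|^2w_k$ already sitting on the \emph{left} of \eqref{eq:minimal graph equation}; what remains, namely $|\nabla w_k|^4/w_k$ and $|\nabla^2 w_k|^2w_k$, \emph{is} bounded by $C\delta\,w_k^3/d_k^4$ after substitution. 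This absorption is the essential mechanism, and your ``convert everything to powers of $\delta/d_k$'' scheme throws away exactly the $|A|^2w_k$ needed to make it work.

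A smaller point: your discussion of the linear coefficient is off. On $S_0^n\subset S_0^n\times\mb R$ one has $|A|^2+\Ric(\n,\n)=0$, not $1$ (this is precisely what ``constant Jacobi fields'' encodes), and on $\Sigma_k^1$ the quantity $|A^{\Sigma_k^1}|^2$ is only small in the scale-invariant sense $d_k^2|A|^2<\delta^2$ --- it is \emph{not} close to any fixed constant when $d_k$ is small. Fortunately $-|A|^2w_k\le 0$ has the right sign, so once the Young-inequality absorption above is carried out, the surviving linear piece is just $-\int_0^{w_k}\Ric$ plus $C(n)\delta\,w_k$ from the $C(n)(u+|\nabla u|+|\nabla u|^3)u$ term, and both are $\le w_k$ for large $k$.
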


\begin{proof}
%\textcolor{blue}{Note that the assumptions \eqref{eq:appendix assumptions} are satisfied with $\mc N=\Si_k^1$ and $\mk d \sim w_k $.}
By Proposition \ref{prop:2nd order estimates}, $|A^{\Sigma_k}|$ has an upper bound over $\Sigma_k^1\cap A(y_k,s,2s;M_k)\setminus B(z_k,b_k/2;M_k)$, so the level sets of the distance to $\Sigma_k^1$ will form a desired foliation as in Appendix \ref{sec:graph functions}. By the Cauchy-Schwartz inequality, we have 
\begin{gather}\label{eq:mean inequ}
 \ 8|A||\nabla w_k|^2 \leq \frac{1}{2}|A|^2w_k+\frac{32|\nabla w_k|^4}{w_k};\quad |\nabla^2w_k||A|w_k\leq \delta|A|^2w_k+\frac{1}{4\delta}|\nabla ^2w_k|^2w_k.
\end{gather}
By plugging \eqref{eq:|A| small}, \eqref{eq:derivative of graphs} and \eqref{eq:mean inequ} into \eqref{eq:minimal graph equation} in Appendix \ref{sec:graph functions}, we can show that 
\[  \Delta w_k+|A|^2w_k\leq 3\delta w_k+|A|^2w_k+C(n)\delta\cdot \Big(\frac{w_k^3}{d^4_k(x)}+w_k\Big).\]
Here in~\eqref{eq:minimal graph equation}, we deal with the first and third terms by \eqref{eq:mean inequ}; for the second term in~\eqref{eq:minimal graph equation}, we used $|A|w_k\leq d_k|A|\cdot \frac{w_k}{d_k}<\delta^2$; in the fourth term, we used $w_k+|\nabla w_k|+|\nabla w_k|^3<3\delta$ by \eqref{eq:derivative of graphs}; the others can be bounded similarly; for instance, we have 
\[ |\nabla w_k|^2|\nabla^2w_k|<\Big(\frac{\delta w_k}{d_k}\Big)^2\cdot\frac{w_k}{d_k^2}<\delta\cdot \frac{w_k^3}{d_k^4} . \]
Hence the \eqref{eq:upper bound for laplace wk:general} is proved. 

Then for  $x\in\Sigma_k^1\cap  A(y_k,R_k,\epsilon;M_k)\setminus B(z_k,b_k/2;M_k)$, it follows that 
\[ d_k(x)\geq \frac{1}{4}\rho(x).  \]
Plugging this into \eqref{eq:upper bound for laplace wk}, we then have 
\begin{equation}
\Delta w_k\leq w_k+\frac{1}{8}\cdot \frac{w_k^3}{|\rho(x)|^4}.
\end{equation}
%This is the desired inequality. 
The last case can be proved similarly. This completes the proof of Lemma \ref{lem:laplace wk}.
\end{proof}

Inspired by Colding-Minicozzi \cite{CM02}*{(2.1)}, we define %\textcolor{red}{(maybe delete all $\Omega_{n-1}$ in the definitions?)}
\begin{equation}\label{def:Ik}
%\mc I_k(s)=(2\pi s)^{-1}\int_{\Sigma_k^2\cap\partial B(y_k,s;g_k)}w_k|(\nabla \rho)^\top|\, d\mu(x),
\mc I_k(s)=\frac{1}{\Omega_{n-1} s^{n-1}}\int_{\gamma_{s}}w_k|\nabla \rho|\, d\mc H^{n-1}(x),
\end{equation}
%and
\begin{equation}\label{def:tau}
\tau_k(s)=\frac{1}{\Omega_{n-1}}\int_{\gamma_{s}}\langle\nabla w_k,\bm\eta\rangle \ \ \ \text{ and } \ \ \ F_k(s)=\frac{n}{\Omega_{n-1}s^n}\int_{\gamma_{s}}(\phi^{-1}-\phi),
\end{equation}
where $\phi=|\nabla \rho|$ is defined in \eqref{eq:phi and eta}. Compared with \cite{CM02}, we introduce the weight $\phi=|\nabla \rho|$ in $\mc I_k$ as $\Sigma_k^1$ is not flat. Note that by Proposition \ref{prop:standard nabla w:high dim},
\begin{equation}\label{eq:estime of tau Rk}
	  \lim_{k\to\infty} \frac{\tau_k(R_k)}{r_k^{n-1}}=2. 
\end{equation}

\subsection{Inequalities for the first derivatives  }	  
In this subsection, we take the derivative for the averages of the height functions between two sheets. The coefficients of leading terms are the functions $\tau_k$ defined above.  	%\textcolor{red}{(Do we need both $\phi$ and $|\nabla^\top\rho|$?)}  

\begin{proposition}\label{prop:computation}
	  	Using above notations, we then have	
	  	\begin{align*}
	  	 \mc I_k'(s)-\frac{\tau_k(s)}{ s^{n-1}}&=\frac{1}{\Omega_{n-1} s^{n-1}}\int_{\gamma_s}w_k\Big[\phi^{-1}\dv_{\Sigma_k}\wti\nabla \rho+\frac{1-n}{s}\phi\Big]\,d\mc H^{n-1}\\
	  	 &\leq \frac{n}{\Omega_{n-1}s^{n}}\int_{\gamma_s}(\phi^{-1}-\phi)w_k+C_0s\mc I_k(s),
	   	\end{align*} 	
where $C_0$ is a constant depending only on $n$ and can be changed from line to line.
\end{proposition}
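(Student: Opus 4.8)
The identity for $\mc I_k'(s)-\tau_k(s)/s^{n-1}$ is a direct computation: differentiate the coarea-type integral
\[
\mc I_k(s)=\frac{1}{\Omega_{n-1}s^{n-1}}\int_{\gamma_s}w_k\phi\,d\mc H^{n-1}
\]
in $s$. Writing $\gamma_s=\Sigma_k^1\cap\partial B(y_k,s;M_k)$ and using the coarea formula on $\Sigma_k^1$ with the distance function $\rho$ restricted to $\Sigma_k^1$ (whose gradient on $\Sigma_k^1$ has norm $\phi=|\nabla\rho|$, where $\nabla$ is the $\Sigma_k^1$-gradient of the ambient distance), one gets
\[
\frac{d}{ds}\int_{\gamma_s}f\,d\mc H^{n-1}=\int_{\gamma_s}\Big(\frac{\langle\nabla f,\nabla\rho\rangle}{\phi^2}+f\,\frac{\dv_{\Sigma_k}(\wti\nabla\rho/\phi)}{\phi}\Big)\phi^{-1}\,d\mc H^{n-1}
\]
or, more conveniently, first rewrite $\int_{\gamma_s}f\phi = \frac{d}{ds}\int_{\{\rho\le s\}\cap\Sigma_k^1}f\,d\mc H^n$ and differentiate once more, producing a term $\int_{\gamma_s}\langle\nabla f,\wti\nabla\rho\rangle/\phi$ together with $\int_{\gamma_s} f\,(\dv_{\Sigma_k}\wti\nabla\rho)/\phi$. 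Taking $f=w_k$, the first of these is exactly $\int_{\gamma_s}\langle\nabla w_k,\bm\eta\rangle = \Omega_{n-1}\tau_k(s)$ once one accounts for $\bm\eta=\wti\nabla\rho/|\wti\nabla\rho|$ and the fact that $|\wti\nabla\rho|=1$ in $M_k$ while its tangential part has norm $\phi$; the remaining term is $\frac{1}{\Omega_{n-1}s^{n-1}}\int_{\gamma_s}w_k\,\phi^{-1}\dv_{\Sigma_k}\wti\nabla\rho$, and the $(1-n)/s\cdot\phi$ correction comes from differentiating the prefactor $s^{1-n}$. This gives the displayed equality; I would organize it as: (i) coarea rewriting, (ii) differentiate, (iii) identify the $\tau_k$ term, (iv) collect the prefactor derivative.

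**Estimating the error term.** For the inequality, the task is to bound $\phi^{-1}\dv_{\Sigma_k}\wti\nabla\rho+\frac{1-n}{s}\phi$. The heuristic is that $\dv_{\Sigma_k}\wti\nabla\rho$ is the sum over the $\Sigma_k^1$-tangential directions of $\Hess_{M_k}\rho$ plus a term $\langle\wti\nabla\rho,\vec H_{\Sigma_k^1}\rangle$; since $\Sigma_k^1$ is very nearly minimal (its mean curvature is controlled by $|A|$ via Proposition \ref{prop:2nd order estimates}, hence by $\delta/d_k$) that last term is negligible. For $\Hess_{M_k}\rho$, on the model space $\mb R^{n+1}$ the distance function to a point has $\Hess\rho=\frac1\rho(\Id-d\rho\otimes d\rho)$, whose trace over an $(n-1)$-dimensional tangential subspace that is $\phi^2$-close to being orthogonal to $\wti\nabla\rho$ is $\frac{n-1}{\rho}\cdot(\text{something }\approx\phi^{-1})$ minus corrections measuring the failure of $\gamma_s$ to be exactly the geodesic sphere and of $\Sigma_k^1$ to be flat. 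Since $M_k\to S_0^n\times\mb R$ locally smoothly, the ambient Hessian of $\rho$ differs from the flat model by $O(\rho)\cdot$(curvature bound), i.e.\ $O(s)$. Carefully tracking these, $\phi^{-1}\dv_{\Sigma_k}\wti\nabla\rho=\frac{n-1}{s}\phi^{-1}+O(s)+O(\phi^{-1}-\phi)\cdot\frac{n-1}{s}$-type errors (here I'm using $s^{-1}(\gamma_s-y_k)$ close to the round sphere, \eqref{eq:gammas round}, and the $|A|$ and $|\nabla w_k|$ smallness from Proposition \ref{prop:2nd order estimates}), so that
\[
\phi^{-1}\dv_{\Sigma_k}\wti\nabla\rho+\frac{1-n}{s}\phi=\frac{n}{s}(\phi^{-1}-\phi)+\big(\text{terms }O(s)\big),
\]
since $\frac{n-1}{s}\phi^{-1}+\frac{1-n}{s}\phi=\frac{n-1}{s}(\phi^{-1}-\phi)$ and a further $\frac1s(\phi^{-1}-\phi)$ worth of correction can be absorbed to upgrade $n-1$ to $n$. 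Integrating against $w_k$ and using $|w_k|\le C\,s\,\mc I_k(s)$-type control (from the Harnack inequality, Proposition \ref{prop:2nd order estimates}(3), which makes $w_k$ on $\gamma_s$ comparable to its average $\mc I_k(s)$, and from \eqref{eq:gammas round} giving $|\gamma_s|\approx\Omega_{n-1}s^{n-1}$) converts the $O(s)$ error into $C_0 s\,\mc I_k(s)$. This yields the claimed upper bound.

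**Main obstacle.** The delicate point is the bookkeeping in the Hessian/divergence estimate: one must show that all the discrepancy terms — the deviation of the ambient metric from the product metric (controlled by local smooth convergence, giving $O(s)$), the deviation of $\gamma_s$ from the exact round $(n-1)$-sphere, the deviation of $\Sigma_k^1$ from a totally geodesic slice (controlled by $|A^{\Sigma_k}|\le\delta/d_k\le 4\delta/s$), and the nonlinear contributions hidden in how $\wti\nabla\rho$ decomposes along $\Sigma_k^1$ — either collapse into the clean $\frac{n}{s}(\phi^{-1}-\phi)$ main term or are of size $O(s)\cdot w_k$ pointwise on $\gamma_s$. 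I would handle this by comparing with the Euclidean model at scale $s$: rescale $B(y_k,s;M_k)$ by $s^{-1}$, where $\Sigma_k^1$ becomes a nearly-flat nearly-minimal graph, compute the corresponding flat quantity exactly, and estimate the error by the $C^2$-distance of the rescaled data to the flat model, which is $O(s)$ by smooth convergence of $M_k$ plus the Proposition \ref{prop:2nd order estimates} bounds. The one subtlety to flag is the region near $z_k$ (when $b_k\neq 0$): the proposition as stated is for $s$ in the range \eqref{eq:good s}, i.e.\ $s\ge 2b_k$ or $s\le b_k/2$, so on the stated domain $d_k(x)$ is comparable to $\rho(x)\approx s$ and the second blowup point does not interfere; this is exactly why the hypothesis \eqref{eq:good s} is imposed, and I would invoke it precisely at the step where $d_k\ge \tfrac14\rho$ (cf.\ the end of the proof of Lemma \ref{lem:laplace wk}) to keep the error bounds uniform.
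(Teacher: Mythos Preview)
Your overall strategy matches the paper's: differentiate $\mc I_k$ via the coarea/first-variation formula, separate out the $\tau_k$ term, and then estimate the remainder using the Hessian comparison $\wti\nabla^2\rho=\frac{1}{\rho}\delta_{ij}+\mc O(\rho)$ on $T(\partial B)$. The identity and the inequality both come out this way, so the plan is sound. However, several of your computational details are off and would need to be corrected before the argument is clean:

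\begin{itemize}
\item $\Sigma_k^1$ is an open subset of the minimal hypersurface $\Sigma_k$, so it is \emph{exactly} minimal: the term $\langle\wti\nabla\rho,\vec H_{\Sigma_k^1}\rangle$ vanishes identically, not merely ``negligible.'' This is precisely why $\dv_{\Sigma_k}\nabla\rho=\dv_{\Sigma_k}\wti\nabla\rho$ in the paper's identity \eqref{eq:divk nabla p}.
\item Your coarea rewriting is inverted: by coarea, $\frac{d}{ds}\int_{\{\rho\le s\}\cap\Sigma_k^1}f=\int_{\gamma_s}f\phi^{-1}$, not $\int_{\gamma_s}f\phi$. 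Also $\bm\eta=\nabla\rho/|\nabla\rho|$, not $\wti\nabla\rho/|\wti\nabla\rho|$. The paper avoids this by differentiating $\int_{\gamma_s}w_k\phi$ directly via the first variation of $\gamma_s$ along the flow field $\bm\eta/\phi$.
\item The trace of $\Hess_{M_k}\rho$ is over the \emph{$n$}-dimensional tangent space $T\Sigma_k^1$, not an $(n-1)$-dimensional one. The paper's computation is: $\dv_{\Sigma_k}\wti\nabla\rho=\dv_{M_k}\wti\nabla\rho-\wti\nabla^2\rho(\mf n,\mf n)$; writing $\mf n=\langle\mf n,\wti\nabla\rho\rangle\wti\nabla\rho+\phi E$ with $E$ the unit projection of $\mf n$ to $T(\partial B)$ (so that $\langle\mf n,E\rangle^2=\phi^2$) gives $\wti\nabla^2\rho(\mf n,\mf n)=\phi^2(\frac{1}{s}+\mc O(s))$ and hence $\dv_{\Sigma_k}\wti\nabla\rho\le\frac{1}{s}(n-\phi^2)+C_0s$. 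Then
\[
\phi^{-1}\dv_{\Sigma_k}\wti\nabla\rho+\frac{1-n}{s}\phi\;\le\;\frac{n\phi^{-1}-\phi}{s}+\frac{(1-n)\phi}{s}+C_0s\;=\;\frac{n}{s}(\phi^{-1}-\phi)+C_0s
\]
\emph{exactly}; there is no ``absorbing'' an extra $\frac{1}{s}(\phi^{-1}-\phi)$ to upgrade $n-1$ to $n$.
\item For the final $C_0 s\,\mc I_k(s)$ term you do not need the Harnack inequality: since $\phi^{-1}\le 1+\epsilon$ one has $\int_{\gamma_s}w_k\le(1+\epsilon)\int_{\gamma_s}w_k\phi=(1+\epsilon)\Omega_{n-1}s^{n-1}\mc I_k(s)$ directly.
\end{itemize}

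None of these slips is fatal to the approach, but fixing them removes all the hand-waving in your ``Main obstacle'' paragraph; once the Hessian term is computed correctly the inequality drops out in two lines, exactly as in the paper.
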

 
\begin{proof}
Recall that $\phi = |\nabla \rho|$. A direct computation gives that	
	\begin{equation}\label{eq:divk nabla p}
	\langle \nabla \phi,\bm \eta\rangle+\phi\dv_{\gamma_s}\bm \eta =\langle \nabla \phi,\bm \eta\rangle+\phi\dv_{\Sigma_k}\bm \eta=\dv_{\Sigma_k}\phi\bm \eta=\dv_{\Sigma_k}\nabla \rho=\dv_{\Sigma_k}\wti \nabla \rho.
	\end{equation}
	Here we used that $\langle \nabla_{\bm \eta}\bm\eta,\bm\eta\rangle=0$ in the first equality; and the last one follows from the minimality of $\Sigma_k$. 	Then %by computation,  
	by noting that $\frac{\partial}{\partial s} = \frac{\nabla \rho}{|\nabla\rho|^2} =\frac{\bm \eta}{\phi}$, we have
	\begin{align*}
	\mc I'_k(s)&=\frac{1-n}{\Omega_{n-1} s^n}\int_{\gamma_{s}}w_k\phi\, d\mc H^{n-1}+\frac{1}{\Omega_{n-1} s^{n-1}}\int_{ \gamma_{ s}}\langle \nabla (w_k\phi), \frac{\bm \eta}{\phi}\rangle+w_k\phi \dv_{\gamma_s}\big(\frac{\bm \eta}{\phi}\big)\,d\mc H^{n-1}\\
	&=\frac{1-n}{\Omega_{n-1} s^n}\int_{\gamma_s}w_k\phi\,d\mc H^{n-1}+\frac{1}{\Omega_{n-1} s^{n-1}}\int_{\gamma_s}\langle\nabla w_k,\bm\eta\rangle+w_k\big[ \langle \nabla \phi,\frac{\bm \eta}{\phi}\rangle+\dv_{\gamma_s}\bm \eta    \big]\,d\mc H^{n-1}\\
	&=\frac{1-n}{\Omega_{n-1} s^n}\int_{\gamma_s}w_k\phi\,d\mc H^{n-1}+\frac{1}{\Omega_{n-1} s^{n-1}}\int_{\gamma_s}\langle\nabla w_k,\bm\eta\rangle+w_k\phi^{-1}\dv_{\Sigma_k}\wti\nabla \rho\,d\mc H^{n-1},
	\end{align*}
	where the last equality is from \eqref{eq:divk nabla p}. Then we are going to prove the inequality. Recall that when restricting to $T(\partial B(p,s;M_k))$, we have
	%\textcolor{red}{(is this true? $\nabla^2 \rho^2 = 2 \delta_{ij} + \mc O(s^2)$?)}%see the file "Hessr"
\begin{equation} \label{eq:hessian rho}
\wti\nabla^2\rho=\frac{1}{\rho}\delta_{ij}+\mc O(\rho).
\end{equation}
Therefore, we have that for $x\in\gamma_s$,
	\begin{align}\label{eq:div nabla rho}
	\dv_{\Sigma_k}\wti\nabla \rho&=\dv_{M_k}\wti\nabla \rho-\langle\wti\nabla_{\mf n}\wti\nabla \rho,\mf n\rangle\\
	&=\frac{n}{s}+\mc O(s)-\langle\wti\nabla_{ E}\wti\nabla \rho,E\rangle\langle \mf n,E\rangle^2\nonumber\\
	&\leq \frac{1}{s}(n-\phi^2)+C_0s.\nonumber
	\end{align}
%Here we used
In the above calculation, we used $E$ for the following expression,
\[  E:=\frac{\n-\langle \n,\wti \nabla \rho\rangle\wti\nabla \rho}{\sqrt{1-\langle \n,\wti \nabla \rho\rangle ^2}}; \]
this is the unit projection of $\n$ to $T(\partial B(p,s;M_k))$. 	
This finishes the proof of Proposition \ref{prop:computation}.
\end{proof}

In the next subsection, we will use the inequality in Proposition \ref{prop:computation} to bound $\mc I_k$, which requires the following results for $F_k(t)$. 

\begin{proposition}\label{prop:bound of F}
Suppose that $[R,s]\subset [R_k,b_k/2]$ or $[2b_k,\epsilon]$. Then	
\begin{align*}
\frac{d}{d s}\Big[s^{-n}\Big(|\Sigma_k^1\cap A(y_k,R,s;M_k)|+\frac{R}{n}\int_{\gamma_R}\phi \Big)\Big]
&=s^{-n}\int_{\gamma_s}(\frac{1}{\phi}-\phi)+\frac{1}{s^{n+1}}\int_{A(y_k,R,s;M_k)\cap \Sigma_k^1} \mc O(\rho^2).
\end{align*}
For all sufficiently large $k$,
\[ \int_{R_k}^{b_k/2}F_k(t)dt+\int_{2b_k}^{\epsilon} F_k(t)dt<\epsilon. \]
\end{proposition}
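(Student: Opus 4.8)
The plan is to prove the two assertions separately: the displayed identity is a monotonicity‑type formula obtained from the first variation (divergence) theorem, and the integral bound follows from it together with the density estimate \eqref{eq:density upper bound} and the blow‑up description.

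\emph{The identity.} I would first note that, by the coarea formula on $\Sigma_k^1$ and $|\nabla\rho|=\phi$, one has $\frac{d}{ds}|\Sigma_k^1\cap A(y_k,R,s;M_k)|=\int_{\gamma_s}\phi^{-1}$, while $\frac{R}{n}\int_{\gamma_R}\phi$ is independent of $s$. For the $s^{-n}$ factor I would apply the divergence theorem on the minimal hypersurface to the radial field $X=\rho\,\wti\nabla\rho$ on the domain $\Sigma_k^1\cap A(y_k,R,s;M_k)$. The hypothesis $[R,s]\subset[R_k,b_k/2]$ or $[2b_k,\epsilon]$ is precisely what guarantees that both catenoid necks lie inside $B(y_k,R;M_k)$ (if $s\le b_k/2$ then $z_k\notin\overline B(y_k,s;M_k)$; if $R\ge 2b_k$ then $z_k\in B(y_k,R;M_k)$), so the boundary of this domain is $\gamma_R\cup\gamma_s$ with outward conormal $\pm\bm\eta$ and $\langle X,\bm\eta\rangle=\rho\phi$ there. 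Using minimality and the Hessian expansion \eqref{eq:hessian rho} exactly as in \eqref{eq:div nabla rho}, $\dv_{\Sigma_k}\wti\nabla\rho=\tfrac1\rho(n-\phi^2)+\mc O(\rho)$, hence $\dv_{\Sigma_k}X=\phi^2+\rho\,\dv_{\Sigma_k}\wti\nabla\rho=\phi^2+(n-\phi^2)+\mc O(\rho^2)=n+\mc O(\rho^2)$. The divergence theorem then gives
\[
n\Big(|\Sigma_k^1\cap A(y_k,R,s;M_k)|+\tfrac Rn\int_{\gamma_R}\phi\Big)=s\int_{\gamma_s}\phi-\int_{A(y_k,R,s;M_k)\cap\Sigma_k^1}\mc O(\rho^2),
\]
and differentiating $s^{-n}(\cdots)$ and replacing the $-n\,s^{-n-1}(\cdots)$ term by this relation produces exactly $s^{-n}\int_{\gamma_s}(\phi^{-1}-\phi)+s^{-n-1}\int_{A(y_k,R,s;M_k)\cap\Sigma_k^1}\mc O(\rho^2)$.

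\emph{The integral bound.} The starting observation is that $F_k\ge 0$, since $\phi=|\nabla\rho|\le|\wti\nabla\rho|=1$ forces $\phi^{-1}-\phi\ge 0$. Set $\Phi_R(s):=s^{-n}\big(|\Sigma_k^1\cap A(y_k,R,s;M_k)|+\tfrac Rn\int_{\gamma_R}\phi\big)$; the first part reads $\tfrac{\Omega_{n-1}}{n}F_k(s)=\Phi_R'(s)-s^{-n-1}\int_{A(y_k,R,s;M_k)\cap\Sigma_k^1}\mc O(\rho^2)$, and the displayed relation also yields the closed form $\Phi_R(s)=\tfrac1{ns^{n-1}}\int_{\gamma_s}\phi-\tfrac1{ns^n}\int_{A(y_k,R,s;M_k)\cap\Sigma_k^1}\mc O(\rho^2)$. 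Integrating the former over $[R_k,b_k/2]$ (with $R=R_k$) and over $[2b_k,\epsilon]$ (with $R=2b_k$), and using the latter at $s=R$ and $s=s_1$ (noting $\Phi_R(R)=\tfrac1{nR^{n-1}}\int_{\gamma_R}\phi$),
\[
\tfrac{\Omega_{n-1}}{n}\int_R^{s_1}F_k=\tfrac1n\Big(\tfrac1{s_1^{n-1}}\int_{\gamma_{s_1}}\phi-\tfrac1{R^{n-1}}\int_{\gamma_R}\phi\Big)-(\mathrm{error}),
\]
where the error collects $\tfrac1{ns_1^n}\int_{A(y_k,R,s_1;M_k)\cap\Sigma_k^1}\mc O(\rho^2)$ and $\int_R^{s_1}t^{-n-1}\int_{A(y_k,R,t;M_k)\cap\Sigma_k^1}\mc O(\rho^2)\,dt$. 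By \eqref{eq:density upper bound} and the monotonicity formula one has $|\Sigma_k^1\cap B(y_k,t;M_k)|\le Ct^n$ for $t\le\epsilon$, so $\mc O(\rho^2)$ integrates to $\le Ct^{n+2}$ and the error is $\mc O(s_1^2)$, i.e. $\mc O(b_k^2)$ in the first case and $\mc O(\epsilon^2)$ in the second.

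\emph{The main step.} It remains to pin down $\tfrac1{s^{n-1}}\int_{\gamma_s}\phi$ at $s=R$ and $s=s_1$. The upper bound $\tfrac1{s^{n-1}}\int_{\gamma_s}\phi\le(1+\mc O(\epsilon^2))\Omega_{n-1}$ is immediate from $\phi\le1$ and \eqref{eq:gammas round} (applicable since $R,s_1$ lie in the range \eqref{eq:good s}). For the matching lower bound one shows $\phi\to1$ uniformly on $\gamma_s$: there $d_k\gtrsim s$ (since $\dist_{M_k}(\cdot,y_k)=s$ and $\dist_{M_k}(\cdot,z_k)\ge|s-b_k|\ge s/2$ in both regimes) and $\gamma_s$ avoids the $Kr_{k,j}$-balls, so $|A^{\Sigma_k}|s\lesssim|A^{\Sigma_k}|d_k\to0$ by Proposition \ref{prop:2nd order estimates}; hence after rescaling by $s^{-1}$ the single sheet $\Sigma_k^1$ is smoothly close to a flat hyperplane through $y_k$ (its waist sits at distance $\asymp r_k\ll s$ from $y_k$) — when $s=\epsilon$ is kept fixed this is replaced by $\Sigma_k^1\cap B(y_k,\epsilon;M_k)$ converging to a geodesic $\epsilon$-ball of $S^n_0$ — and on such a model $\rho$ restricts to the intrinsic distance function, so $|\nabla\rho|\equiv1$; the case $s=R_k$ additionally uses the catenoid-end asymptotics of Appendix \ref{sec:catenoids}. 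Thus $\tfrac1n\big(\tfrac1{s_1^{n-1}}\int_{\gamma_{s_1}}\phi-\tfrac1{R^{n-1}}\int_{\gamma_R}\phi\big)=\tfrac{\Omega_{n-1}}{n}\,\mc O(\epsilon^2+o(1))$, and combining with the error estimate and $F_k\ge0$,
\[
0\le\int_{R_k}^{b_k/2}F_k+\int_{2b_k}^{\epsilon}F_k\le C\epsilon^2+o(1)<\epsilon
\]
once $\epsilon<10^{-1000}$ is fixed and $k$ is large. I expect this last step to be the main obstacle: certifying that $\Sigma_k^1$ is genuinely cone-like of density one at both endpoint scales — so that $\tfrac1{s^{n-1}}\int_{\gamma_s}\phi$ equals $\Omega_{n-1}$ up to $\mc O(\epsilon^2)+o(1)$ — is where the density upper bound, the curvature decay of Proposition \ref{prop:2nd order estimates}, and the catenoid/hyperplane dichotomy of Proposition \ref{prop:limit cones} all enter; everything else is bookkeeping around the divergence theorem.
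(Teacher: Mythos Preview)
Your proof is correct and follows essentially the same route as the paper: the identity is the divergence theorem on $\Sigma_k^1$ applied to $\rho\,\wti\nabla\rho$ (the paper phrases this as $\Delta_{\Sigma_k}\rho^2=2n+\mc O(\rho^2)$), and the integral bound follows by integrating it together with the fact that $s^{-1}(\gamma_s-y_k)$ is smoothly close to the unit $(n-1)$-sphere, so that $\phi^{-1}\le 1+\tfrac{\epsilon}{10}$ and $|\gamma_s|\approx\Omega_{n-1}s^{n-1}$. The only cosmetic difference is that the paper bounds the area $|\Sigma_k^1\cap A(y_k,R_k,b_k/2;M_k)|$ at the upper radius directly via the co-area formula, whereas you substitute the closed form $\Phi_R(s)=\tfrac{1}{ns^{n-1}}\int_{\gamma_s}\phi+\mc O(s^2)$ back in --- both yield the same endpoint comparison.
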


\begin{proof}
By the divergence theorem,
\begin{equation}
\int_{A(y_k,R,s;M_k)\cap \Sigma_k^1}\Delta\rho^2\label{eq:Delta p-s}= \int_{\gamma_s}2s\phi -\int_{\gamma_R}2R\phi .
\end{equation}
%By \eqref{eq:hessian rho} \textcolor{red}{(do we need to refer to that eq?)}, we have
%\[ \Delta \rho^2= 2n+\mc O(\rho^2).  \]
Since $\Delta \rho^2= 2n+\mc O(\rho^2)$, we have
\[ |\Sigma_k^1\cap A(y_k,R,s;M_k)|+\frac{R}{n}\int_{\gamma_R}\phi = \frac{s}{n} \int_{\gamma_s} \phi + \int_{A(y_k,R,s;M_k)\cap \Sigma_k^1} \mc O(\rho^2). \]
%Then the desired equality follows from 
Finally note that
\[ \frac{d}{ds}\Big|\Sigma_k^1\cap A(y_k,R,s;M_k)\Big|=\int_{\gamma_s}\phi^{-1}.\]
The first identity follows by plugging all the above identities. Then we have
\[ 
\begin{aligned}
\int_{R_k}^{b_k/2}F_k(t)\,dt
& \leq  \frac{n}{\Omega_{n-1}(b_k/2)^{n}}\Big(|\Sigma_k^1\cap A(y_k,R_k,b_k/2;M_k)|+\frac{R_k}{n}\int_{\gamma_{R_k}}\phi \Big)\\
& -\frac{1}{\Omega_{n-1}R_k^{n-1}}\int_{\gamma_{R_k}}\phi+C_0b_k^2.
\end{aligned}
 \]
 Recall that by \eqref{eq:gammas round}, for all sufficiently large $k$, we have
 \[  (1-\frac{\epsilon}{10})\Omega_{n-1}s^{n-1}\leq |\gamma_s|\leq (1+\frac{\epsilon}{10})\Omega_{n-1}s^{n-1} ; \ \ \ 1\leq \phi^{-1}\leq 1+\frac{\epsilon}{10} .\]
 Then by the co-area formula, we have
 \begin{align*}
   &\ \ \ \frac{n}{\Omega_{n-1}(b_k/2)^{n}}\Big(|\Sigma_k^1\cap A(y_k,R_k,b_k/2;M_k)|+\frac{R_k}{n}\int_{\gamma_{R_k}}\phi \Big)\\
   & \leq\frac{n}{\Omega_{n-1}(b_k/2)^{n}}\, \cdot \Omega_{n-1} \Big[\int_{R_k}^{b_k/2}(1+\frac{\epsilon}{10})s^{n-1}\,ds+\frac{R_k}{n}(1+\frac{\epsilon}{10})R_k^{n-1}  \Big]= 1+\frac{\epsilon}{10}.
\end{align*}
On the other hand,
  \[ \frac{1}{\Omega_{n-1}R_k^{n-1}}\int_{\gamma_{R_k}}\phi\geq 1-\frac{\epsilon}{5}.  \]
Combining them together, we then have 
 \[ \int_{R_k}^{b_k/2}F_k(t)\,dt\leq \frac{3\epsilon}{10}+C_0\epsilon^2<\frac{\epsilon}{2}.   \]
Similarly,
 \[ \int_{2b_k}^{\epsilon} F_k(t)\,dt<\frac{\epsilon}{2}.   \]
Hence Proposition \ref{prop:bound of F} is proved.
\end{proof}

\subsection{Hausdorff distance upper bounds in dimension three}
In this part, we focus on the case $n=2$ and prove the upper bounds of the Hausdorff distance between $\Sigma_k$ and $S_0^2$. Recall that $\Sigma_k^1$ and $\Sigma_k^2$ are jointed by one or two small catenoids with radii $r_k$ and $\wti r_k$; see Remark \ref{rmk:structure of Sigmak}. Without loss of generality, we assume that $r_k\geq\wti  r_k$. Recall that $\Sigma_k^2$ is a graph over $\Sigma_k^1$ with graph function $w_k$.

To prove the desired bound, we will derive several monotonicity formulas associated with the average of $w_k$. Then the smooth convergence on $\partial B(p,\epsilon;M_k)$ will give the desired upper bound.

Now let 
\begin{gather}\label{eq:wti Ik}
\wti{\mc I}_k(s)=\mc I_k(s)-3r_k\log \frac{s}{r_k}-2sr_k\log\frac{s}{r_k}-10\Big(\int_{[R_k,s]\setminus [b_k/2,2b_k]}F_k(t)\,dt\Big) \cdot r_k\log \frac{s}{r_k};\\
\wti \tau_k(s)=\tau_k(s)+\frac{r_k^2}{2s}-sr_k\log\frac{s}{r_k}.\label{eq:wti tauk}
\end{gather}
Then by Proposition \ref{prop:bound of F},
\begin{equation}\label{eq:Ik and wti Ik}
\mc I_k(s)< \wti {\mc I}_k(s)+4r_k\log \frac{s}{r_k}. 
\end{equation}
Recall that $R_k/r_k\to +\infty$ and $r_k^{-1}(\Sigma_k\cap B(y_k,R_k;M_k)-y_k)$ is arbitrarily close to the catenoid $\mc C\cap B_{R_k/r_k}(0)$ in the smooth topology. Then by Item \eqref{item:h bound}in Appendix \ref{sec:catenoids},
\begin{equation}\label{eq:Ik Rk}
\mc I_k(R_k)\leq (1+\frac{1}{100})\cdot 2r_k\int_1^{R_k/r_k}\frac{ds}{\sqrt{s^2-1}}<(1+\frac{1}{100})\cdot 2r_k\log\frac{2R_k}{r_k}<3r_k\log\frac{R_k}{r_k}.
\end{equation}
Then by \eqref{eq:estime of tau Rk}, \eqref{eq:wti Ik} and \eqref{eq:Ik Rk},
\begin{equation}\label{eq:wtiIk Rk}
\wti {\mc I}_k(R_k)<0,  \ \ \ \tau_k(R_k)<(2+\frac{1}{10})r_k.
\end{equation}

Now we are ready to show that $\wti {\mc I}_k(s)$ is decreasing.
\begin{proposition}\label{prop:bound wk}
	 $\wti {\mc I}_k(s)$ and $\wti\tau_k(s)$ are decreasing on $[R_k,b_k/2]$ (resp. $[R_k,\epsilon]$)  if  $b_k\neq 0$ (resp. $b_k=0$). It follows that 
	\begin{gather*}
	\mc I_k(s)< 4r_k\log\frac{s}{r_k},  \ \  \ 	\tau_k(s)< 3r_k-\frac{r_k^2}{2s}+sr_k\log\frac{s}{r_k} \ \ \text{ for } \  s\in[R_k,b_k/2].
\end{gather*}
\end{proposition}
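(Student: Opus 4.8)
The plan is to prove the monotonicity of $\wti{\mc I}_k$ and $\wti\tau_k$ simultaneously by a coupled differential-inequality argument, treating $\mc I_k$ and $\tau_k$ as an ODE-like system and absorbing the nonlinear error terms using the a priori height smallness from Proposition \ref{prop:2nd order estimates}. First I would record the two basic first-variation identities: Proposition \ref{prop:computation} gives $\mc I_k'(s) - s^{1-n}\tau_k(s)$ controlled by $F_k$-type terms plus $C_0 s\,\mc I_k(s)$, and a second computation (differentiating $\tau_k$ and integrating the PDI \eqref{eq:upper bound for laplace wk} over $\gamma_s$ via the divergence theorem on $\Sigma_k^1 \cap A(y_k,R_k,s;M_k)$) gives an upper bound for $\tau_k'(s)$ of the form $\tau_k'(s) \leq (\text{main linear term in } \mc I_k) + (\text{cubic error } \int w_k^3/\rho^4)$. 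The cubic error is the new feature relative to \cite{CM02}: using $w_k/d_k < \delta$ from \eqref{eq:derivative of graphs} together with $d_k \geq \rho/4$ on the relevant annulus, one bounds $w_k^3/\rho^4 \leq \delta^2 w_k/\rho^2$, so after integrating over $\gamma_s$ (of size $\approx \Omega_{n-1}s^{n-1}$) it contributes something like $C\delta^2 \mc I_k(s)/s$, of the same shape as the already-present lower-order terms.

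Next I would set up the bootstrap: define the "model" quantities $3r_k\log(s/r_k)$, $2sr_k\log(s/r_k)$, $r_k^2/(2s)$, $sr_k\log(s/r_k)$ appearing in \eqref{eq:wti Ik}, \eqref{eq:wti tauk}, and check that these are designed precisely so that the leading catenoid contributions cancel. Concretely, for the exact catenoid the height between the two sheets on $\partial B_s$ is $\approx 2r_k\log(s/r_k)$ and its radial derivative integrates to $\approx 2r_k$ (this is \eqref{eq:estime of tau Rk} and Appendix \ref{sec:catenoids}), so subtracting the model terms makes $\wti{\mc I}_k(R_k) < 0$ and $\tau_k(R_k) < (2+\tfrac1{10})r_k$, i.e. $\wti\tau_k(R_k) < 0$ — this is \eqref{eq:wtiIk Rk}. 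I would then show: whenever $\wti{\mc I}_k \leq 0$ and $\wti\tau_k \leq 0$ on $[R_k,s_0]$, the differential inequalities force $\wti{\mc I}_k'(s_0) \leq 0$ and $\wti\tau_k'(s_0) \leq 0$. The point is that under the sign hypotheses, $\mc I_k(s) \leq 4r_k\log(s/r_k)$ and $\tau_k(s) \leq 3r_k - r_k^2/(2s) + sr_k\log(s/r_k)$, so the linear main term $s^{1-n}\tau_k$ in $\mc I_k'$ and the extra explicit derivatives $-3r_k/s - 2r_k\log(s/r_k) - 2r_k - \dots$ coming from differentiating the model terms beat each other, and likewise the $C_0 s\mc I_k$ and the $F_k$-weighted terms are absorbed by the $10(\int F_k)\,r_k\log(s/r_k)$ correction because Proposition \ref{prop:bound of F} bounds $\int F_k < \epsilon$. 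A continuity/connectedness argument (the set of $s$ where both are $\leq 0$ is closed, nonempty, and relatively open, using the strict inequalities at $R_k$) then yields monotonicity, hence the sign, on all of $[R_k,b_k/2]$ (or $[R_k,\epsilon]$ when $b_k=0$), and the displayed bounds on $\mc I_k(s)$ and $\tau_k(s)$ follow by unwinding \eqref{eq:wti Ik}–\eqref{eq:Ik and wti Ik}.

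The main obstacle I anticipate is bookkeeping the higher-order error terms in the $\tau_k'$ inequality: $w_k$ is a graph of $\Sigma_k^2$ over the \emph{non-flat, non-fixed} hypersurface $\Sigma_k^1$, so differentiating $\tau_k(s) = \Omega_{n-1}^{-1}\int_{\gamma_s}\langle\nabla w_k,\bm\eta\rangle$ produces curvature terms of $\Sigma_k^1$, terms involving $\wti\nabla^2\rho = \tfrac1\rho\delta_{ij}+\mc O(\rho)$ as in \eqref{eq:hessian rho}, and the genuinely nonlinear $w_k^3/\rho^4$ from \eqref{eq:upper bound for laplace wk}. Each of these must be shown to be either (i) of lower order than the $r_k\log(s/r_k)$ scale — using $s \leq \epsilon \ll 1$, $R_k/r_k\to\infty$, and $w_k \lesssim \delta d_k$ — or (ii) exactly matched by one of the correction terms built into $\wti{\mc I}_k,\wti\tau_k$. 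A secondary subtlety is the region near $z_k$: on $[R_k,b_k/2]$ one must stay outside $B(z_k,b_k/2;M_k)$ so that \eqref{eq:upper bound for laplace wk} (rather than its $\wti\rho$-analogue) applies, which is why the $F_k$-integral in \eqref{eq:wti Ik} is taken over $[R_k,s]\setminus[b_k/2,2b_k]$; I would handle the crossing of the annulus $[b_k/2,2b_k]$ by the separate estimate of Proposition \ref{prop:bound wk both}. Modulo these estimates, the argument is a fairly rigid Gronwall/maximum-principle comparison against the explicit catenoid model.
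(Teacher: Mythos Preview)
Your overall strategy---a coupled continuity/bootstrap argument on $(\wti{\mc I}_k,\wti\tau_k)$ using Proposition~\ref{prop:computation} for $\mc I_k'$ and the divergence theorem plus \eqref{eq:upper bound for laplace wk} for $\tau_k'$---is exactly what the paper does. Two points need correction, one minor and one substantive.

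\emph{Minor.} Your claim that $\wti\tau_k(R_k)<0$ is false: from \eqref{eq:estime of tau Rk} and \eqref{eq:wti tauk} one gets $\wti\tau_k(R_k)\approx 2r_k>0$ (the corrections $r_k^2/(2R_k)$ and $R_kr_k\log(R_k/r_k)$ are $o(r_k)$). The paper's bootstrap is therefore formulated on the \emph{derivatives}: one defines $s_1=\sup\{s:\wti{\mc I}_k'\leq 0,\ \wti\tau_k'\leq 0\text{ on }[R_k,s)\}$, deduces $\wti\tau_k(s_1)\leq\wti\tau_k(R_k)<\tfrac{5}{2}r_k$ (not $\leq 0$), and this is what feeds the bound $\tau_k(s_1)<3r_k-\tfrac{r_k^2}{2s_1}+s_1r_k\log\tfrac{s_1}{r_k}$. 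Your version, requiring $\wti\tau_k\leq 0$, never gets started.

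\emph{Substantive.} Your treatment of the cubic error $w_k^3/\rho^4$ does not close. Using only $w_k/d_k<\delta$ gives $w_k^3/\rho^4\leq \delta^2 w_k/\rho^2$, hence a contribution to $\tau_k'(s)$ of order $\delta^2\mc I_k(s)/s$. Under the bootstrap bound $\mc I_k(s)\leq 4r_k\log(s/r_k)$ this is $\sim \delta^2 r_k\log(s/r_k)/s$, which is \emph{not} dominated by the derivative of the correction terms in $\wti\tau_k$ (namely $r_k^2/(2s^2)+r_k\log(s/r_k)+r_k$): for fixed $\delta$ and $s\leq\epsilon$ one has $\delta^2/s\gg 1$, so the term swamps $r_k\log(s/r_k)$; and comparison with $r_k^2/(2s^2)$ requires $\delta^2 s\log(s/r_k)\lesssim r_k$, which fails as $k\to\infty$. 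The paper's fix is to feed the \emph{bootstrap} height bound back into the cubic term: from $\mc I_k(s_1)<4r_k\log(s_1/r_k)$ and the Harnack estimate (Proposition~\ref{prop:2nd order estimates}\eqref{item:harnack}) one gets $w_k<5r_k\log(s_1/r_k)$ pointwise on $\gamma_{s_1}$, hence
\[
\frac{1}{2\pi}\int_{\gamma_{s_1}}\phi^{-1}\frac{w_k^3}{8s_1^4}\ \lesssim\ \frac{r_k^3}{s_1^3}\Big(\log\frac{s_1}{r_k}\Big)^3\ <\ \frac{r_k^2}{2s_1^2},
\]
the last step using $(r_k/s_1)(\log(s_1/r_k))^3\to 0$. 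This is precisely what the $r_k^2/(2s)$ correction in $\wti\tau_k$ is designed to absorb. In short: the a~priori bound $w_k<\delta d_k$ is too crude here; the cubic nonlinearity must be controlled by the sharper bootstrap bound $w_k\lesssim r_k\log(s/r_k)$, and this coupling is the heart of the argument.
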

\begin{proof}
	We first assume that $b_k\neq 0$. %By the definition of $\wti{\mc I}_k$ and $\wti\tau_k$, it suffices to prove that $\wti{\mc I}_k$ and $\wti\tau_k$ are decreasing on $[R_k,b_k/2]$. 
	To prove that $\wti{\mc I}_k$ and $\wti\tau_k$ are decreasing on $[R_k,b_k/2]$, let 
\[ s_1:=\sup\{ s\in (R_k,b_k/2): \wti \tau'_k(t)\leq 0 ,\  \wti{\mc I}_k'(t)\leq 0  \ \text{ for  all } \ t\in[R_k,s) \} .\]	
We claim that $s_1=b_k/2$. Suppose on the contrary that $s_1<b_k/2$. Then $\wti{\mc I}_k$ and $\wti\tau_k$ are decreasing on $[R_k,s_1]$, which together with \eqref{eq:wtiIk Rk} implies that 
\begin{gather*}
 \wti{\mc I}_k(s_1)\leq \wti{\mc I}_k(R_k)<0 ;\ \ \	\wti \tau_k(s_1)\leq \wti\tau_k(R_k)< \frac{5r_k}{2}.
 	\end{gather*}
By \eqref{eq:wti tauk} and \eqref{eq:Ik and wti Ik}, it follows that
\begin{gather*}
 \mc I_k(s_1)< \wti {\mc I}_k(s_1)+4r_k\log\frac{s_1}{r_k}< 4 r_k\log \frac{s_1}{r_k} ;\\
 \tau_k(s_1)= \wti \tau_k(s_1)-\frac{r_k^2}{2s_1}+s_1r_k\log \frac{s_1}{r_k}< 3r_k -\frac{r_k^2}{2s_1}+s_1r_k\log \frac{s_1}{r_k}.
 \end{gather*} 
 Then by Proposition \ref{prop:computation}, 
 \begin{align*}
   \mc I_k'(s_1)&< \frac{\tau_k(s_1)}{s_1}+ 5 r_k\log\frac{s_1}{r_k} \cdot F_k(s_1)+C_0s_1r_k\log\frac{s_1}{r_k}\\
   &< \frac{3r_k}{s_1}+2r_k\log \frac{s_1}{r_k}+5r_k\log\frac{s_1}{r_k} \cdot F_k(s_1).
   \end{align*}
  Here in the first inequality, we used that by Proposition \ref{prop:2nd order estimates} \eqref{item:harnack} and \eqref{eq:gammas round}, for $x\in \gamma_{s_1}$,
  \begin{equation}\label{eq:wk bound on s1}
  w_k(x)\leq (1+1/100)^2\mc I_k(s_1)<5 r_k\log\frac{s_1}{r_k} .  
  \end{equation}
Then by \eqref{eq:wti Ik}, it follows that 
 \begin{equation}\label{eq:wtiIk'<0}
  \wti {\mc I}_k'(s_1) <\mc I_k'(s_1)-\frac{3r_k}{s_1}-2r_k\log \frac{s_1}{r_k}-10F_k(s_1)\cdot r_k\log \frac{s_1}{r_k} <0. 
 \end{equation} 
 On the other hand, by the divergence theorem, the co-area formula, ~\eqref{eq:upper bound for laplace wk} and ~\eqref{eq:wk bound on s1}, 
 \begin{align}
  \tau_k'(s_1)=\frac{1}{2\pi}\int_{\gamma_{s_1}}\phi^{-1}\Delta w_k &\leq \frac{1}{2\pi}\int_{\gamma_{s_1}}\phi^{-1}\Big(w_k+\frac{w_k^3}{8s_1^4}\Big)\\
 &<  2s_1\cdot  \Big( 5 r_k\log\frac{s_1}{r_k}+\frac{1}{8 s_1^4} 5^3 r_k^3\big(\log \frac{s_1}{r_k}\big)^3 \Big) < r_k\log\frac{s_1}{r_k}+\frac{r_k^2}{2s_1^2}. \nonumber
  \end{align}
  Here the first inequality follows from \eqref{eq:upper bound for laplace wk}; the second one follows from \eqref{eq:wk bound on s1}, \eqref{eq:gammas round} and $\phi^{-1}|_{\gamma_{s_1}}\leq 1+\epsilon$; in the last one, we used that as $k\to\infty$,
  \[  s_1\to 0;\ \ \frac{s_1}{r_k}\to\infty; \ \  \frac{r_k}{s_1}(\log\frac{s_1}{r_k})^3 \to 0. \]
 Together with ~\eqref{eq:wti tauk}, it follows that 
 \[ \wti\tau_k'(s_1)<\tau_k'(s_1)-\frac{r_k^2}{2s_1^2} -r_k\log \frac{s_1}{r_k}<0.   \]
 Combining with ~\eqref{eq:wtiIk'<0}, this contradicts the choice of $s_1$.
 
 \medskip
 If $b_k=0$, then the same argument above gives that $\wti{\mc I}_k$ and $\wti\tau_k$ are decreasing on $[R_k,\epsilon]$. This finishes the proof of Proposition \ref{prop:bound wk}.
 \end{proof}

 Note that $\mc I_k(s)$ is not well-defined in a small neighborhood of $b_k$. Here we use Proposition  \ref{prop:2nd order estimates} and the divergence theorem to jump over such a small interval.
 
\begin{proposition}
\label{prop:bound wk both}
Suppose that $b_k\neq 0$. Then $\wti{\mc I}_k-3r_k\log\frac{s}{r_k}$ and $\wti\tau_k$ are decreasing on $[2b_k,\epsilon]$. It follows that 
\[  \mc I_k(s)<7r_k\log\frac{s}{r_k},  \ \ \ \ 
\tau_k(s)< 6 r_k+sr_k\log\frac{s}{r_k} \ \ \text{ for  } \ s\in [2b_k,\epsilon]. \] 
In particular, we conclude that for sufficiently large $k$,
\[ \max_{x\in \Sigma_k^1\cap \partial B(y_k,\epsilon;M_k) }w_k < \frac{15}{2}r_k\log\frac{\epsilon}{r_k}.  \]
\end{proposition}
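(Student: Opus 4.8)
My plan is a three-stage argument. First I would import the bounds at $s=b_k/2$ from Proposition~\ref{prop:bound wk}; then I would ``jump'' over the interval $[b_k/2,2b_k]$, on which $\mc I_k$ is not controlled because $\partial B(y_k,s;M_k)$ sweeps across the small catenoid centered at $z_k$; finally I would rerun the monotonicity argument of Proposition~\ref{prop:bound wk} on $[2b_k,\epsilon]$, now for the shifted quantities $\wti{\mc I}_k(s)-3r_k\log\frac{s}{r_k}$ and $\wti\tau_k(s)$. The extra summand $-3r_k\log\frac{s}{r_k}$, together with the term $-sr_k\log\frac{s}{r_k}$ already built into $\wti\tau_k$, serves as a reservoir that absorbs the jump. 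From Proposition~\ref{prop:bound wk} read at $s=b_k/2$ I record $\mc I_k(b_k/2)<4r_k\log\frac{b_k/2}{r_k}$, $\tau_k(b_k/2)<3r_k-\frac{r_k^2}{b_k}+\frac{b_k}{2}r_k\log\frac{b_k/2}{r_k}$, $\wti{\mc I}_k(b_k/2)\le\wti{\mc I}_k(R_k)<0$ and $\wti\tau_k(b_k/2)\le\wti\tau_k(R_k)<\tfrac52r_k$.

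For the jump, set $U_k:=\big(\Sigma_k^1\cap A(y_k,b_k/2,2b_k;M_k)\big)\setminus B(z_k,b_k/4;M_k)$; since $b_k\to0$ and $b_k/r_k,\ b_k/\wti r_k\to\infty$, for large $k$ this is a compact manifold with smooth boundary $\gamma_{b_k/2}\cup\gamma_{2b_k}\cup\big(\Sigma_k^1\cap\partial B(z_k,b_k/4;M_k)\big)$, with $|U_k|=\mc O(b_k^n)\to0$ and $d_k\gtrsim b_k$ throughout $U_k$. The divergence theorem applied to $w_k$ on $U_k$, combined with \eqref{eq:upper bound for laplace wk} and its $\wti\rho$-counterpart, gives
\[
\tau_k(2b_k)-\tau_k(b_k/2)=\frac{1}{\Omega_{n-1}}\int_{U_k}\Delta w_k-\tau_k^{z_k}(b_k/4)\le\frac{1}{\Omega_{n-1}}\int_{U_k}\Big(w_k+\tfrac18\tfrac{w_k^3}{d_k^4}\Big)-\tau_k^{z_k}(b_k/4),
\]
where $\tau_k^{z_k}$ denotes the analogue of $\tau_k$ based at $z_k$; the flux $\tau_k^{z_k}(b_k/4)$ is positive and $\approx2\wti r_k^{\,n-1}$ by \eqref{eq:estime of tau Rk} at $z_k$, so it only helps. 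Using the Harnack inequality \eqref{item:harnack} of Proposition~\ref{prop:2nd order estimates} to compare $w_k$ on $U_k$ with its value on $\gamma_{b_k/2}$, one gets $w_k\lesssim\mc I_k(b_k/2)\lesssim r_k\log\frac{b_k}{r_k}$ on $U_k$; together with $d_k\gtrsim b_k$ and $|U_k|=\mc O(b_k^n)\to0$ this yields, for $n=2$, $\frac1{\Omega_{n-1}}\int_{U_k}\Delta w_k=o\big(b_kr_k\log\tfrac{b_k}{r_k}\big)+o(r_k)$. Since the change of the explicit terms of $\wti\tau_k$ over $[b_k/2,2b_k]$ is $\approx-\tfrac32b_kr_k\log\tfrac{b_k}{r_k}$, combining with $\wti\tau_k(b_k/2)<\tfrac52r_k$ gives $\wti\tau_k(2b_k)<6r_k$; and Harnack applied between $\gamma_{b_k/2}$ and $\gamma_{2b_k}$ gives $\mc I_k(2b_k)\le(1+o(1))\mc I_k(b_k/2)<5r_k\log\frac{2b_k}{r_k}$, whence (using $F_k\ge0$ and \eqref{eq:wti Ik}) $\wti{\mc I}_k(2b_k)-3r_k\log\frac{2b_k}{r_k}<0$.

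On $[2b_k,\epsilon]$ the argument is verbatim that of Proposition~\ref{prop:bound wk}: put $s_1=\sup\{s\in(2b_k,\epsilon]:(\wti{\mc I}_k-3r_k\log\tfrac{\cdot}{r_k})'\le0\ \text{and}\ \wti\tau_k'\le0\ \text{on}\ [2b_k,s)\}$ and suppose $s_1<\epsilon$; decrease on $[2b_k,s_1]$ with the initial values from the jump and \eqref{eq:Ik and wti Ik}, \eqref{eq:wti tauk} gives $\mc I_k(s_1)<7r_k\log\frac{s_1}{r_k}$, $\tau_k(s_1)<6r_k+s_1r_k\log\frac{s_1}{r_k}$ and, by \eqref{item:harnack} and \eqref{eq:gammas round}, $w_k|_{\gamma_{s_1}}<(1+o(1))\cdot7r_k\log\frac{s_1}{r_k}$; feeding these into Proposition~\ref{prop:computation}, Lemma~\ref{lem:laplace wk}\,\eqref{eq:upper bound for laplace wk} and the bound $\int_{2b_k}^{\epsilon}F_k<\tfrac\epsilon2$ of Proposition~\ref{prop:bound of F} produces $(\wti{\mc I}_k-3r_k\log\tfrac{\cdot}{r_k})'(s_1)<0$ and $\wti\tau_k'(s_1)<0$, contradicting the choice of $s_1$; hence $s_1=\epsilon$, and the stated bounds $\mc I_k(s)<7r_k\log\frac{s}{r_k}$, $\tau_k(s)<6r_k+sr_k\log\frac{s}{r_k}$ on $[2b_k,\epsilon]$ follow. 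The pointwise bound is then read off from $\mc I_k(\epsilon)<7r_k\log\frac{\epsilon}{r_k}$ exactly as in \eqref{eq:wk bound on s1}: by \eqref{item:harnack} and \eqref{eq:gammas round}, $\max_{\gamma_\epsilon}w_k\le(1+o(1))\mc I_k(\epsilon)<\tfrac{15}{2}r_k\log\frac{\epsilon}{r_k}$ for large $k$. I expect the hard part to be the jump: one must be sure the bulk integral $\int_{U_k}\Delta w_k$ and the flux $\tau_k^{z_k}(b_k/4)$ are of strictly lower order than the reservoir in $\wti{\mc I}_k$ and $\wti\tau_k$, which rests on $|U_k|=\mc O(b_k^n)\to0$ (so that $\int_{U_k}w_k=o(b_kr_k\log\frac{b_k}{r_k})$ when $n=2$) and on the catenoid geometry near $z_k$ pinning the flux across $\partial B(z_k,b_k/4;M_k)$ to the scale $\wti r_k$.
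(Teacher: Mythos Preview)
Your overall three–stage plan (import from Proposition~\ref{prop:bound wk}, jump over $[b_k/2,2b_k]$, rerun the monotonicity on $[2b_k,\epsilon]$) is exactly the paper's strategy, and the first and third stages are carried out correctly. The jump step, however, contains a genuine error.

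The divergence theorem on $U_k$ gives
\[
\tau_k(2b_k)-\tau_k(b_k/2)-\tau_k^{z_k}(b_k/4)\;=\;\frac{1}{\Omega_{n-1}}\int_{U_k}\Delta w_k,
\]
since the outward normal of $U_k$ along $\Sigma_k^1\cap\partial B(z_k,b_k/4;M_k)$ points \emph{toward} $z_k$, i.e.\ is $-\bm\eta_z$. Thus the flux $\tau_k^{z_k}(b_k/4)$ enters with a \emph{plus} sign in the upper bound for $\tau_k(2b_k)$, not a minus sign as you wrote; its positivity hurts rather than helps. This is precisely why the constant in the proposition is $6r_k$ and not $3r_k$: the $3r_k$ coming from $\tau_k(b_k/2)$ and another (up to) $3r_k$ coming from $\tau_k^{z_k}(b_k/4)$.

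Moreover, \eqref{eq:estime of tau Rk} only pins the flux at the catenoid scale $\wti R_k$, not at $b_k/4\gg\wti R_k$. To control $\tau_k^{z_k}(b_k/4)$ you must rerun the monotonicity argument of Proposition~\ref{prop:bound wk} centered at $z_k$ on $[\wti R_k,b_k/4]$, obtaining
\[
\tau_k^{z_k}(b_k/4)\;<\;3\wti r_k+\tfrac{b_k}{4}\,\wti r_k\log\tfrac{b_k}{4\wti r_k}\;\le\;3r_k+\tfrac{b_k}{4}\,r_k\log\tfrac{b_k}{4r_k},
\]
using $\wti r_k\le r_k$. Once you insert this (with the correct sign) and your bound on $\frac{1}{\Omega_{n-1}}\int_{U_k}\Delta w_k$, you recover $\tau_k(2b_k)<6r_k-\tfrac{r_k^2}{4b_k}+b_kr_k\log\tfrac{b_k}{r_k}$ and hence $\wti\tau_k(2b_k)<6r_k$, after which your third stage goes through unchanged.
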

\begin{proof}
 %By using Propositions \ref{prop:bound wk} and \ref{prop:2nd order estimates} (letting $\delta=10^{-10}$ therein) twice,
By applying Proposition \ref{prop:2nd order estimates} \eqref{item:harnack}, and then Proposition \ref{prop:bound wk}, we have
\begin{equation}\label{eq:boun wk around b_k}
 \max\{w_k; x\in \Sigma_k^1\cap A(y_k,b_k/2,2b_k;M_k)\setminus B(z_k,b_k/4;M_k)  \}\leq (1+\delta) \mc I_k(\frac{b_k}{2}) < \frac{9}{2}r_k\log\frac{b_k}{2r_k}. 
 \end{equation}
Then by \eqref{eq:gammas round},
\begin{equation}\label{eq:Ik on 2bk}
\mc I_k(2b_k)< 5 r_k\log \frac{b_k}{2r_k}.
\end{equation}
Moreover, by the divergence theorem, %and \eqref{eq:upper bound for laplace wk},
\begin{align*}
 \tau_k(2b_k)-\tau_k(b_k/2)-\tau_k(b_k/4;z_k)&= \frac{1}{2\pi}\int_{\Sigma_k^1\cap A(y_k,b_k/2,2b_k;M_k)\setminus B(z_k,b_k/4;M_k)} \Delta w_k\,dx\\   
& \leq \frac{1}{2\pi}\int_{\Sigma_k^1\cap A(y_k,b_k/2,2b_k;M_k)\setminus B(z_k,b_k/4;M_k)}w_k+\frac{w_k^3}{8\rho^4}\,dx \\ 
&< 10b_k^2r_k\log \frac{b_k}{r_k}+ \frac{5^3r_k^3}{b_k^2}\Big(\log\frac{b_k}{r_k}\Big)^3\\
 &<\frac{1}{4} b_kr_k\log\frac{b_k}{2r_k}+\frac{r_k^2}{2b_k}.
 \end{align*}
Here we used the notation
\[\tau_k(s;z_k)=\frac{1}{2\pi}\int_{\Sigma_k^1\cap\partial B(z_k,s;M_k)}\langle \nabla w_k, \bm \eta_z\rangle , \]
and $\bm \eta_z $ is the unit normal to $\partial B(z_k,s;M_k)\cap \Sigma_k^1$. The first inequality is from \eqref{eq:upper bound for laplace wk}; we used \eqref{eq:boun wk around b_k} in the second one, and the area upper bound of $\Sigma_k^1\cap A(y_k,b_k/2,2b_k;M_k)\setminus B(z_k,b_k/4;M_k)$ is from the fact that $b_k^{-1}(\Sigma_k^1-y_k)$ is very close to a hyperplane. Recall that $\Sigma_k$ is very close to a catenoid with radius $\wti r_k\leq r_k$ around $z_k$. Then by the same argument as in Proposition \ref{prop:bound wk}, we also have 
\[ \tau_k(b_k/4;z_k)< 3\wti r_k+\frac{1}{4}b_k\wti r_k\log\frac{b_k}{4\wti r_k}\leq 3r_k+\frac{1}{4}b_kr_k\log\frac{b_k}{4r_k}.  \]
Here the inequality is from $\wti r_k\leq r_k$, and the monotonicity of $r\mapsto r\log\frac{b_k}{4r}$ when $b_k/4r_k > e^{-1}$.
It follows that 
\begin{align}\label{eq:tau 2bk}
 \tau_k(2b_k)&< \tau_k(b_k/2)+\tau_k(b_k/4;z_k)+\frac{1}{4} b_kr_k\log\frac{b_k}{2r_k}+\frac{r_k^2}{2b_k}\\
 &<  3r_k-\frac{r_k^2}{b_k}+ \frac{1}{2}b_kr_k\log\frac{b_k}{2r_k}+3r_k+ \frac{1}{4}b_kr_k\log\frac{b_k}{4r_k}+\frac{1}{4} b_kr_k\log\frac{b_k}{2r_k}+\frac{r_k^2}{2b_k}\nonumber\\
 &< 6r_k-\frac{r_k^2}{4b_k}+b_kr_k\log\frac{b_k}{r_k}.\nonumber
 \end{align}
 
In the next, we are going to prove that $\wti{\mc I}_k(s)-3r_k\log\frac{s}{r_k}$ and $\wti\tau_k$ are decreasing. Let 
\[ s_2:=\sup\{ s\in [2b_k,\epsilon]: \wti \tau'_k(t)\leq 0 ,\  \wti{\mc I}_k'(t)\leq \frac{3r_k}{t}  \ \text{ for  all } \ t\in[2b_k,s] \} .\]
It suffices to prove that $s_2=\epsilon$. Suppose on the contrary that $s_2<\epsilon$. Then %by the definition of $s_2$, 
$\wti{\mc I}_k-3r_k\log\frac{s}{r_k}$ and $\wti\tau_k$ are decreasing on $[2b_k,s_2]$, and this implies that 
\begin{gather*}
\wti{\mc I}_k(s_2)-3r_k\log\frac{s_2}{r_k}\leq \wti{\mc I}_k(2b_k)-3r_k\log\frac{2b_k}{r_k}< \mc I(2b_k)-6r_k\log \frac{2b_k}{r_k}<0,\,  \text{by \eqref{eq:Ik on 2bk}};\\
	\wti \tau_k(s_2)\leq \wti\tau_k(2b_k)= \tau_k(2b_k)+\frac{r_k^2}{4b_k}-2b_kr_k\log \frac{2b_k}{r_k}< 6r_k,\, \text{by \eqref{eq:tau 2bk}} .
\end{gather*}
%Here the first line is from \eqref{eq:Ik on 2bk} and the second line comes from \eqref{eq:tau 2bk}. 
Together with \eqref{eq:Ik and wti Ik} and \eqref{eq:wti tauk}, we then have 
\begin{gather}
\mc I_k(s_2)<\wti {\mc I}_k(s_2)+4r_k\log\frac{s_2}{r_k}< 7 r_k\log \frac{s_2}{r_k}\label{eq:Ik s2} ;\\
\tau_k(s_2)= \wti \tau_k(s_2)-\frac{r_k^2}{2s_2}+s_2r_k\log \frac{s_2}{r_k}< 6r_k-\frac{r_k^2}{2s_2}+s_2r_k\log \frac{s_2}{r_k}.\nonumber
\end{gather}
Recall that by Proposition \ref{prop:computation}, 
\begin{align*}
\mc I_k'(s_2)&\leq \frac{\tau_k(s_2)}{s_2}+ 9r_k\log\frac{s_2}{r_k} \cdot F_k(s_2)+C_0s_2r_k\log\frac{s_2}{r_k},\\
&< \frac{6r_k}{s_2}+2r_k\log \frac{s_2}{r_k}+9r_k\log\frac{s_2}{r_k} \cdot F_k(s_2),
\end{align*}
where in the first inequality, we used that for all $x\in \gamma_{s_2}$, 
\begin{equation}\label{eq:wk s2}
 w_k(x)\leq (1+1/10)\mc I_k(s_2)<9r_k\log \frac{s_2}{r_k}. 
 \end{equation}
 By \eqref{eq:wti Ik}, it follows that 
\begin{equation}\label{eq:wtiIk'<0:after bk}
\wti {\mc I}_k'(s_2) <\mc I_k'(s_2)-\frac{3r_k}{s_2}-2r_k\log \frac{s_2}{r_k}-10F_k(s_2)\cdot r_k\log \frac{s_2}{r_k} <\frac{3r_k}{s_2}. 
\end{equation} 
On the other hand, by the divergence theorem and the co-area formula,
\begin{align*} \tau_k'(s_2)&=\frac{1}{2\pi}\int_{\gamma_{s_2}}\phi^{-1}\Delta w_k \leq \frac{1}{2\pi}\int_{\gamma_{s_2}}\phi^{-1}\Big(w_k+\frac{w_k^3}{8s_2^4}\Big)\,dx\\
&\leq 4s_2\cdot 9 r_k\log\frac{s_2}{r_k}+9^3\frac{r_k^3}{s_2^3}(\log \frac{s_2}{r_k})^3<r_k\log\frac{s_2}{r_k}+\frac{r_k^2}{2s_2^2}. 
\end{align*}
Here the first inequality is from \eqref{eq:upper bound for laplace wk}; the second one follows from \eqref{eq:Ik s2}, \eqref{eq:wk s2}, \eqref{eq:gammas round} and $\phi^{-1}|_{\gamma_{s_1}}\leq 1+\epsilon$; in the last one, we used that as $k\to\infty$,
\[  s_2\to 0;\ \ \frac{s_2}{r_k}\to\infty; \ \  \frac{r_k}{s_2}(\log\frac{s_2}{r_k})^3\to 0. \]
Then \eqref{eq:wti tauk} can be applied to get
\[ \wti\tau_k'(s_2)<\tau_k'(s_2)-\frac{r_k^2}{2s_2^2} -r_k\log \frac{s_2}{r_k}<0.   \]
Combining with \eqref{eq:wtiIk'<0:after bk}, it contradicts the choice of $s_2$.

Then the bound of $w_k$ on $\Sigma_k^1\cap \partial B(y_k,\epsilon;M_k)$ follows from Proposition \ref{prop:2nd order estimates}(3).  Hence Proposition \ref{prop:bound wk both} is proved.
\end{proof}

Recall that $\Sigma_k$ smoothly converges to $2S_0^2$ outside at most two balls. In particular, $\Sigma_k^1$ and $\Sigma_k^2$ are graphs over $S_0^2$ outside such two balls; see Remark \ref{rmk:structure of Sigmak}. Observe that the normalization of graph functions of $\Sigma_k^1$ and $\Sigma_k^2$ can both give bounded Jacobi fields, which would be smooth across the singularities. Then $w_k$ are equivalent to the difference of the two graph functions. By the maximum principle, such two graph functions should have opposite signs. Then the upper bound for $w_k$ implies the Hausdorff distance between $\Sigma_k$ and $S_0^2$. 
\begin{theorem}\label{thm:hausdorff distance}
	For all sufficiently large $k$,
	\[  \max_{\Sigma_k}\dist_{M_k}(x,S^2_0)\leq 8r_k|\log r_k|.\]
\end{theorem}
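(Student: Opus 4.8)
The plan is to split $\Sigma_k$ into three pieces and to estimate $\dist_{M_k}(\cdot,S_0^2)$ on each: the catenoid necks $B(y_k,R_k;M_k)\cup B(z_k,\wti R_k;M_k)$, the intermediate annular region where $\Sigma_k^1$ and $\Sigma_k^2$ are both graphs over $S_0^2$ (from radius $R_k$ from the neck centres out to radius $\epsilon$), and the far region $M_k\setminus(B(y_k,\epsilon;M_k)\cup B(z_k,\epsilon;M_k))$ where $\Sigma_k^j\to S_0^2$ smoothly. The device that turns the separation estimates of the previous propositions into a Hausdorff-distance bound is the elementary observation that $M_k\cap\{x_{n+2}=0\}=S_0^2$: hence if the two sheets lie on \emph{opposite sides} of $S_0^2$, then any short geodesic of $M_k$ joining them meets $S_0^2$, so on the whole annular plus far region one has $\dist_{M_k}(x,S_0^2)\le w_k$ at the corresponding point.

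So the first step is to show that $\Sigma_k^1$ and $\Sigma_k^2$ do lie on opposite sides, i.e.\ that the Fermi graph functions $u_k^1,u_k^2$ of $\Sigma_k^1,\Sigma_k^2$ over $S_0^2$ satisfy $u_k^1>0>u_k^2$. Both $u_k^j$ and the zero function solve the same uniformly elliptic minimal surface equation over $S_0^2$ in $M_k$, so by the strong maximum principle each $u_k^j$ has a constant sign on its connected domain (the case $u_k^j\equiv0$ being trivial), and by the catenoid structure one sheet, say $\Sigma_k^1$, lies globally above the other. If $u_k^1,u_k^2$ had the same sign, then following the lower sheet into the lower end of a neck would force the neck height $x_{n+2}(y_k)$ past the catenoid end-height $r_k\cosh^{-1}(R_k/r_k)$, which would place all of $\Sigma_k$ on one side of $S_0^2$, contradicting the Frankel-type property \eqref{item:cmc foliation}. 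With opposite signs in hand, the observation above yields $\dist_{M_k}(x,S_0^2)\le w_k$ on the annular plus far region; moreover, evaluating at the inner radius $R_k$ via \eqref{eq:Ik Rk} and the Harnack inequality Proposition \ref{prop:2nd order estimates} \eqref{item:harnack} gives $u_k^1|_{\partial B(y_k,R_k;M_k)}\le w_k<4r_k\log(R_k/r_k)$, so $x_{n+2}(y_k)<4r_k\log(R_k/r_k)$, and together with the geometry of the catenoid $\mc C$ (Appendix \ref{sec:catenoids}) this bounds $\dist_{M_k}(x,S_0^2)\le|x_{n+2}(y_k)|+r_k\cosh^{-1}(R_k/r_k)(1+o(1))<6r_k|\log r_k|$ for $x\in B(y_k,R_k;M_k)$, and likewise around $z_k$ with radius $\wti r_k\le r_k$, once $R_k$ is chosen with $R_k=o(r_k|\log r_k|)$. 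This settles the neck pieces.

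On the annular region the monotonicity estimates finish the job directly: for $x\in\Sigma_k^1$ at scale $s\in[R_k,\epsilon]$, Propositions \ref{prop:bound wk} and \ref{prop:bound wk both} and the Harnack inequality give $w_k(x)\le(1+\delta)(1+\epsilon)\mc I_k(s)<8r_k\log(s/r_k)<8r_k|\log r_k|$ (the interval $[b_k/2,2b_k]$ and the ball about $z_k$ being handled exactly as inside the proof of Proposition \ref{prop:bound wk both}), so $\dist_{M_k}(x,S_0^2)\le w_k<8r_k|\log r_k|$. For the far region I would invoke the rigidity of Jacobi fields on $S_0^2$: the difference $u_k^1-u_k^2$ solves, up to errors quadratically small in the $C^2$-norms of the $u_k^j$, the Jacobi equation of $S_0^2$ in $M_k$, whose operator converges to $\Delta_{S_0^2}$ as $M_k\to S_0^2\times\mb R$, so the normalization of $u_k^1-u_k^2$ converges in $C^\infty_{\mathrm{loc}}$ away from the neck points to a bounded solution of $\Delta_{S_0^2}v=0$ --- bounded because there $w_k\lesssim r_k|\log r_k|$ while the normalizing constant is, in the only nontrivial case, $\gg r_k|\log r_k|$; by removability $v$ extends to a harmonic function on $S_0^2$, hence a positive constant (cf.~\cite{Si87}). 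Therefore $u_k^1-u_k^2$ is spatially constant on the far region up to a factor $1+o(1)$, and evaluating on $\partial B(y_k,\epsilon;M_k)$ --- where Proposition \ref{prop:bound wk both} gives $\max_{\Sigma_k^1\cap\partial B(y_k,\epsilon;M_k)}w_k<\tfrac{15}{2}r_k\log(\epsilon/r_k)$ --- shows $w_k=(u_k^1-u_k^2)(1+o(1))<\tfrac{15}{2}r_k\log(\epsilon/r_k)(1+o(1))<8r_k|\log r_k|$ on all of the far region for large $k$, whence $\dist_{M_k}(x,S_0^2)\le w_k<8r_k|\log r_k|$ there as well. Combining the three pieces proves the theorem.

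The main obstacle is the far region: there $w_k$ satisfies only the nonlinear inequality \eqref{eq:upper bound for laplace wk}, which carries no maximum principle, so the boundary bound on $\partial B(y_k,\epsilon;M_k)$ cannot be propagated inward by a barrier and one must instead use the Jacobi-field rigidity of $S_0^2$. Making that precise hinges on (i) the removable-singularity step for the normalized difference of graph functions, which in turn rests on the near-neck bound $w_k\lesssim r_k|\log r_k|$ extracted from the monotonicity estimates, and (ii) the opposite-sign alternative, which is what converts bounds on $w_k$ into genuine height bounds and itself uses the strong maximum principle together with Property \eqref{item:cmc foliation}; the remaining bookkeeping is routine, the only tight inequality being $\tfrac{15}{2}<8$, used together with the harmless gains $\log(\epsilon/r_k)<|\log r_k|$ and $1+o(1)$.
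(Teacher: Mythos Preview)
Your argument has a genuine gap in the step that converts the separation bound on $w_k$ into a bound on $\dist_{M_k}(\cdot,S_0^2)$. The device you use is the ``opposite sides'' claim $u_k^1>0>u_k^2$, but your justification is incorrect: the strong maximum principle does \emph{not} say that the difference of two solutions of a quasilinear elliptic equation has constant sign --- it says only that if $u\ge v$ with equality at an interior point then $u\equiv v$. Two minimal graphs over $S_0^2$ can intersect transversally, so each $u_k^j$ may well change sign. A second difficulty: you assume $\Sigma_k^1,\Sigma_k^2$ are graphs over $S_0^2$ throughout the annulus $[R_k,\epsilon]$, but the paper explicitly warns (see the ``Idea of the proof'') that this may fail, which is exactly why Section~\ref{sec:hasdorff distance} writes $\Sigma_k^2$ as a graph over $\Sigma_k^1$ rather than over $S_0^2$. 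Without the graph structure there, your bound on $x_{n+2}(y_k)$ via $u_k^1|_{\partial B(y_k,R_k)}$ and your neck estimate are unavailable. Finally, your far-region argument normalises only the \emph{difference} $u_k^1-u_k^2$; this shows $w_k$ is spatially nearly constant but does not control the individual heights $|u_k^j|$, so again you are forced back onto the unproven opposite-sides claim.

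The paper sidesteps all three issues. It works with $u_k^j$ only in the far region (where they exist by smooth convergence) and normalises each $u_k^j$ \emph{separately} by $\Lambda_{k,\epsilon}:=\max_{\partial B_\epsilon(\mc W)}\{-u_k^1,u_k^2\}$; then $u_k^j/\Lambda_{k,\epsilon}\to c_j$ are bounded Jacobi fields on $S_0^2\setminus\mc W$, hence constants after removing singularities, with $-1\le c_1$, $c_2\le 1$ and one of $c_1=-1$, $c_2=1$. Property~\eqref{item:cmc foliation} then forces $c_1\le 0$ (after arranging $c_2=1$), so $w_k/\Lambda_{k,\epsilon}\to c_2-c_1\ge 1$, whence $\Lambda_{k,\epsilon}\le(1+o(1))\max_{\partial B_\epsilon}w_k$; simultaneously $|u_k^j|\le(1+o(1))\Lambda_{k,\epsilon}$ on the far region, bounding $\dist(\cdot,S_0^2)$ there directly with no sign hypothesis. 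Inside $B_\epsilon(p)$ the paper invokes a local minimal foliation of the nearly Euclidean ball by minimal $n$-disks: by the maximum principle, $\Sigma_k\cap B_\epsilon(p)$ lies in the slab determined by its boundary values on $\partial B_\epsilon(p)$. This single step replaces your separate annular and neck estimates and needs no graph structure over $S_0^2$ there.
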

\begin{proof}
Recall that $\Sigma_k$ locally smoothly converges locally smoothly to $2\cdot S^2_0$ away from $\mc W$ which contains at most two points. Then for any compact set $\Omega\subset S_0^2\setminus \mc W$, $\Sigma_k$ can be decomposed into two minimal graph functions in the neighborhood of $\Omega$ for sufficiently large $k$. Denote by $u_k^1$ and $u_k^2$ the graph functions. Note that $u_k^1$ and $u_k^2$ are defined on any compact domain in $S_0^2\setminus \mc W$ by taking large $k$. Denote by 
\[\lambda_{k}(x,s)=\max_{\partial  B_s(x)\cap S_0^2}\{-u_k^1,u_k^2\} \ \ \text{ and } \ \ \Lambda_{k,s}=\max_{x\in\mc W}\lambda_k(x,s).\]
Then by a standard argument (see \cite{Si87}), $u_k^1/\Lambda_{k, \epsilon}$ and $u^2_k/\Lambda_{k,\epsilon}$ locally smoothly converges to $c_1$ and $c_2$, which are Jacobi fields of $S_0^2\setminus \mc W$. By the minimal foliation argument, $c_1$ and $c_2$ are bounded, which implies that they can be extended smoothly across $\mc W$. Therefore, $c_1$ and $c_2$ are constants. Moreover, by the definition of $\Lambda_{k,\epsilon}$, $c_1= -1$ or $c_2=1$. Without loss of generality, we assume that $c_2=1$. Note that $\Sigma_k$ does not lie in one side of $S_0^2$, which implies that $c_1\leq 0$. Then by the smooth convergence of $\Sigma_k$ in $B_{2\epsilon}(\mc W)\setminus B_{\epsilon/2}(\mc W)$, for sufficiently large $k$,
	\[ \lim_{k\to\infty }\max_{\Sigma_k^1\cap B_\epsilon(\mc W)}\frac{w_k}{\Lambda_{k,\epsilon}}=\lim_{k\to\infty}\max_{\partial  B_\epsilon(\mc W)\cap \Sigma}\frac{u_k^2-u_k^1}{\Lambda_{k,\epsilon}}= 1-c_1\geq 1.  \]
By Propositions \ref{prop:bound wk} and \ref{prop:bound wk both}, for sufficiently large $k$,
\[ \max_{x\in \Sigma_k^1\cap \partial B_{\epsilon}(\mc W) }w_k < (\frac{15}{2}+\frac{1}{100})r_k\log\frac{\epsilon}{r_k}.  \]
Thus we conclude that for all large $k$,
	\[ \Lambda_{k,\epsilon}<(\frac{15}{2} +\frac{1}{50})r_k|\log r_k|.  \]
	By the minimal foliation argument inside $B_{\epsilon}(p)$ and Harnack inequalities outside such a ball, 
	\[ \max_{x\in\Sigma_k}\dist_{M}(x,S_0^2)<8r_k|\log r_k|.   \]
This completes the proof of Lemma \ref{thm:hausdorff distance}.
	\end{proof}

\subsection{Upper bounds for Hausdorff distance in high dimensions}
In this part, we restrict $3\leq n\leq6$ and prove the upper bounds of the Hausdorff distance between $\Sigma_k$ and $S_0^{n}$. Recall that $\Sigma_k^1$ and $\Sigma_k^2$ are jointed by one or two small catenoids with radii $r_k$ and $\wti r_k$; see Remark \ref{rmk:structure of Sigmak}. Without loss of generality, we assume that $r_k\geq\wti  r_k$. Recall that $\Sigma_k^2$ is a graph over $\Sigma_k^1$ with graph function $w_k>0$.

To prove the desired bound, we start at one catenoid and derive new monotonicity formulas. Then the smooth convergence on $\partial B(p,\epsilon;M_k)$ will give the desired upper bound. Now let 
\begin{gather}\label{eq:wti Ik:high}
\wti{\mc I}_k(s)=\mc I_k(s)+4s^{-\frac{1}{2}}r_k^{\frac{3}{2}}-sr_k-10r_k\int_{[R_k,s]\setminus [b_k/2,2b_k]} F_k(t)\,dt;\\
\wti \tau_k(s)=\tau_k(s)-9s^nr_k-r_k^{\frac{3}{2}}s^{n-\frac{5}{2}}.\label{eq:wti tauk:high}
\end{gather}
Then by \eqref{prop:bound of F},
\begin{equation}\label{eq:Ik and wti Ik:high}
\mc I_k(s)\leq \wti {\mc I}_k(s)+\frac{r_k}{20}, 
\end{equation}
Recall that $R_k/r_k\to 0$ and $r_k^{-1}(\Sigma_k\cap B(y_k,R_k;M_k)-y_k)$ is arbitrarily close to the catenoid $\mc C\cap B_{R_k/r_k}(0)$ in the smooth topology. Then by Item \eqref{item:h bound} in Appendix \ref{sec:catenoids},
\begin{equation}\label{eq:Ik Rk:high}
\mc I_k(R_k)\leq (1+\frac{1}{100})\cdot 2r_k\int_1^{\infty}\frac{ds}{\sqrt{s^{2(n-1)}-1}}<\frac{27}{10}r_k.
\end{equation}
And by \eqref{eq:estime of tau Rk}, \eqref{eq:wti Ik:high} and \eqref{eq:Ik Rk:high},
\begin{equation}\label{eq:wtiIk Rk:high}
\wti {\mc I}_k(R_k)<\frac{14}{5}r_k,  \ \ \ \tau_k(R_k)<(2+\frac{1}{10})r_k^{n-1}.
\end{equation}

Now we are ready to show that $\wti {\mc I}_k(s)$ is decreasing in two disjoint intervals. 
\begin{proposition}\label{prop:bound wk:high}
 $\wti {\mc I}_k(s)$ and $\wti\tau_k(s)$ are decreasing on $[R_k,b_k/2)$ (resp. $[R_k,\epsilon]$) if $b_k\neq 0$ (resp. $b_k=0$). It follows that 
	\begin{gather*}
	\mc I_k(s)< 3r_k,  \ \  \ 	\tau_k(s)< 3r_k^{n-1}+9s^nr_k+r_k^{\frac{3}{2}}s^{n-\frac{5}{2}} \ \ \text{ for } \  s\in[R_k,b_k/2].
	\end{gather*}
\end{proposition}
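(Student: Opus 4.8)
The plan is to mimic the argument of Proposition \ref{prop:bound wk} verbatim, with the modified auxiliary functions $\wti{\mc I}_k$ and $\wti\tau_k$ from \eqref{eq:wti Ik:high} and \eqref{eq:wti tauk:high} replacing the logarithmic ones. As in the three-dimensional case, one sets $b_k\neq 0$ first (the case $b_k=0$ being identical after replacing $b_k/2$ by $\epsilon$) and defines
\[ s_1:=\sup\{ s\in(R_k,b_k/2): \wti\tau_k'(t)\leq 0,\ \wti{\mc I}_k'(t)\leq 0 \text{ for all } t\in[R_k,s)\}. \]
One wants $s_1=b_k/2$; assume for contradiction $s_1<b_k/2$. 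Monotonicity of $\wti{\mc I}_k$, $\wti\tau_k$ on $[R_k,s_1]$ together with \eqref{eq:wtiIk Rk:high} gives $\wti{\mc I}_k(s_1)<\tfrac{14}{5}r_k$ and $\wti\tau_k(s_1)<(2+\tfrac{1}{10})r_k^{n-1}$, hence by \eqref{eq:Ik and wti Ik:high} and \eqref{eq:wti tauk:high} the bounds $\mc I_k(s_1)<3r_k$ and $\tau_k(s_1)<3r_k^{n-1}+9s_1^nr_k+r_k^{3/2}s_1^{n-5/2}$. By Proposition \ref{prop:2nd order estimates}\eqref{item:harnack} and \eqref{eq:gammas round} this translates to a pointwise bound $w_k(x)\leq (1+\tfrac{1}{100})^2\mc I_k(s_1)<\tfrac{31}{10}r_k$ for $x\in\gamma_{s_1}$.

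The core of the argument is then the two differential inequalities at $s_1$. First, plugging the above into Proposition \ref{prop:computation} (together with Proposition \ref{prop:bound of F} controlling the $F_k$ term) yields an estimate of the shape
\[ \mc I_k'(s_1) \;<\; \frac{\tau_k(s_1)}{s_1}+10r_k\,F_k(s_1)+C_0 s_1 r_k \;<\; \frac{3r_k^{n-1}}{s_1}+9s_1^{n-1}r_k+r_k^{3/2}s_1^{n-7/2}+10r_k F_k(s_1)+C_0 s_1 r_k. \]
Comparing with the explicit derivative of \eqref{eq:wti Ik:high} — whose key ingredient is $\tfrac{d}{ds}(4s^{-1/2}r_k^{3/2})=-2s^{-3/2}r_k^{3/2}$, the term designed precisely to absorb $r_k^{3/2}s_1^{n-7/2}$ once one notes $n\geq 3$ so that $s_1^{n-7/2}\leq s_1^{-1/2}$ for $s_1$ small, actually for $n\geq 3$ the bad exponent is at worst $n-7/2$ and since $s_1\to 0$ one compares with $s_1^{-1/2}$ — and using $-sr_k$ to kill the leading $\tau_k(s_1)/s_1$ piece $\approx 9s_1^{n-1}r_k$ and $\approx 3r_k^{n-1}/s_1$ (recall $r_k^{n-1}/s_1\to 0$ since $n\geq 3$ and $s_1\geq R_k\gg r_k$; here the improvement over $n=2$ is exactly that $r_k^{n-1}$ is superlinearly small), and the $-10r_k F_k$ term kills the $F_k$ contribution, one obtains $\wti{\mc I}_k'(s_1)<0$. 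Second, for $\wti\tau_k$: by the divergence theorem, the co-area formula, \eqref{eq:upper bound for laplace wk} and the pointwise bound on $w_k$,
\[ \tau_k'(s_1)=\frac{1}{\Omega_{n-1}}\int_{\gamma_{s_1}}\phi^{-1}\Delta w_k \leq \frac{1}{\Omega_{n-1}}\int_{\gamma_{s_1}}\phi^{-1}\Big(w_k+\frac{w_k^3}{8s_1^4}\Big) < C(n)s_1^{n-1}\Big(r_k+\frac{r_k^3}{s_1^4}\Big), \]
and since $r_k^3/s_1^4=(r_k/s_1)^3\cdot r_k/s_1\cdot s_1^{-0}$, more precisely $r_k^3 s_1^{n-5}=r_k^{3/2}s_1^{n-5/2}\cdot (r_k/s_1)^{3/2}\cdot s_1^{0}\to 0$ relative to $r_k^{3/2}s_1^{n-5/2}$, this is dominated by the derivative $\tfrac{d}{ds}(9s^nr_k+r_k^{3/2}s^{n-5/2})=9ns^{n-1}r_k+(n-\tfrac52)r_k^{3/2}s^{n-7/2}$ subtracted in \eqref{eq:wti tauk:high}, giving $\wti\tau_k'(s_1)<0$. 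Together these contradict the definition of $s_1$, so $s_1=b_k/2$. The stated consequences $\mc I_k(s)<3r_k$ and the bound on $\tau_k(s)$ then follow by unwinding \eqref{eq:Ik and wti Ik:high}, \eqref{eq:wti tauk:high}, and monotonicity.

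The main obstacle — and the place requiring genuine care rather than routine bookkeeping — is tracking the exponents of $s$ in the error terms to confirm that the handpicked correction terms $4s^{-1/2}r_k^{3/2}$ and $-9s^nr_k-r_k^{3/2}s^{n-5/2}$ really dominate. The subtle point is that $s=s_1$ ranges over $[R_k,b_k/2]$ with $s_1\to 0$ but $s_1/r_k\to\infty$, so one must verify each comparison uses only $r_k/s_1\to 0$ (true since $s_1\geq R_k\gg r_k$) and $s_1\to 0$, never a lower bound on $s_1$; in particular the term $r_k^{3/2}s_1^{n-7/2}$ arising from $w_k^3/s_1^4$ must be checked to be $o(s_1^{-3/2}r_k^{3/2})$, i.e. $s_1^{n-2}\to 0$, which holds since $n\geq 3$. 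This dimension constraint $n\geq 3$ is exactly what makes the cleaner, non-logarithmic estimate possible and is why this proposition is separated from the $n=2$ case. One should also, as in Proposition \ref{prop:bound wk both}, be prepared to handle the interval $[2b_k,\epsilon]$ past the singular point $b_k$ using the divergence theorem to jump over the small bad interval around $b_k$ — but that is the content of the next proposition, not this one.
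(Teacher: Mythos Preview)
Your approach is exactly the paper's: define $s_1$ as the supremum where both $\wti{\mc I}_k'\leq 0$ and $\wti\tau_k'\leq 0$, then derive a contradiction at $s_1$. The structure and the $\wti\tau_k'$ estimate are correct.

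There is, however, a concrete slip in your $\mc I_k'$ estimate. Proposition \ref{prop:computation} gives $\mc I_k'(s)-\tau_k(s)/s^{n-1}\leq \cdots$, not $\tau_k(s)/s$; you divided by the wrong power. With the correct denominator the three contributions from $\tau_k(s_1)/s_1^{n-1}$ are
\[
\tfrac{5}{2}\Big(\frac{r_k}{s_1}\Big)^{n-1},\qquad 9s_1 r_k,\qquad \Big(\frac{r_k}{s_1}\Big)^{3/2},
\]
and the matching derivatives of the correction terms in \eqref{eq:wti Ik:high} are $-2(r_k/s_1)^{3/2}$, $-r_k$, and $-10r_kF_k(s_1)$. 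The third displayed term is absorbed \emph{exactly} by $2(r_k/s_1)^{3/2}$ (no asymptotic comparison needed), the first is $o\big((r_k/s_1)^{3/2}\big)$ because $n-1\geq 2>3/2$ and $r_k/s_1\to 0$ --- this, not ``$s_1^{n-2}\to 0$'', is where $n\geq 3$ enters --- and $9s_1 r_k+C_0s_1r_k<r_k$ since $s_1\to 0$. Your final paragraph conflates two different error terms: the quantity $r_k^{3/2}s_1^{n-7/2}$ you wrote does not arise from $w_k^3/s_1^4$ (that source produces $O(r_k^3 s_1^{n-5})$ in the $\tau_k'$ estimate, which is indeed $o(r_k^{3/2}s_1^{n-7/2})$ since the ratio is $(r_k/s_1)^{3/2}$); it was an artifact of the wrong division above. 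Once you fix the exponent $s_1^{n-1}$, the bookkeeping becomes cleaner and matches the paper line by line.
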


\begin{proof}[Proof of Proposition \ref{prop:bound wk:high}]
Suppose $b_k\neq 0$. To prove that $\wti{\mc I}_k$ and $\wti\tau_k$ are decreasing on $[R_k,b_k/2]$, we let 
	\[ s_1:=\sup\{ s\in (R_k,b_k/2): \wti \tau'_k(t)\leq 0 ,\  \wti{\mc I}_k'(t)\leq 0  \ \text{ for  all } \ t\in[R_k,s) \} .\]	
 We claim that $s_1=b_k/2$. Suppose on the contrary that $s_1<b_k/2$. Then it follows that $\wti{\mc I}_k(s)$ and $\wti\tau_k(s)$ are decreasing on $[R_k,s_1]$. Hence by \eqref{eq:wtiIk Rk:high},
	\begin{gather*}
	\wti {\mc I}_k(s_1)\leq \wti{\mc I}_k(R_k)<\frac{14r_k}{5};\ \ \ 	\wti\tau_k(s_1)\leq \wti\tau_k(R_k)< \frac{5r_k^{n-1}}{2}.
	\end{gather*}
 Together with \eqref{eq:Ik and wti Ik:high} and \eqref{eq:wti tauk:high}, we have that
	\begin{gather}\label{eq:Ik s1:high}
	\mc I_k(s_1)\leq  \wti {\mc I}_k(s_1)+\frac{r_k}{20}  < \frac{29r_k}{10} ;\\
	\tau_k(s_1)= \wti \tau_k(s_1)+9s_1^nr_k+s_1^{n-\frac{5}{2}}r_k^{\frac{3}{2}} < \frac{5}{2}r_k^{n-1}+ 9s_1^nr_k+s_1^{n-\frac{5}{2}}r_k^{\frac{3}{2}}.\nonumber
	\end{gather}
Note that the first inequality together with \eqref{eq:gammas round} and Proposition \ref{prop:2nd order estimates} \eqref{item:harnack} implies that  for $x\in\gamma_{s_1}$,
\begin{equation}\label{eq:wk on s1:high}
w_k(x)\leq (1+1/100)\mc I_k(s_1)< 3r_k.
\end{equation}
 Then by Proposition \ref{prop:computation}, 
\begin{align*}
\mc I_k'(s_1)&\leq \frac{\tau_k(s_1)}{s_1^{n-1}}+    \frac{n}{\Omega_{n-1}s_1^{n}}\int_{\gamma_{s_1}}(\phi^{-1}-\phi)w_k+C_0s_1\mc I_k(s_1)\\
&\leq \frac{5}{2}\Big(\frac{r_k}{s_1}\Big)^{n-1}+ 9s_1r_k+s_1^{-\frac{3}{2}}r_k^{\frac{3}{2}}+3r_kF_k(s_1)+C_0s_1\cdot 3r_k\\
&< 2\Big(\frac{r_k}{s_1}\Big)^{\frac{3}{2}}+r_k+3r_kF_k(s_1),
\end{align*}
where the second inequality is from \eqref{eq:wk on s1:high}; the last one follows from $n-1\geq 2$, $s_1\to 0$ and $r_k/s_k\to 0$ as $k\to\infty$. Together with \eqref{eq:wti Ik:high}, we then have 
\begin{equation}\label{eq:wti Ik'<0}
 \wti{\mc I}_k'(s_1)= \mc I'_k(s_1)-2\Big(\frac{r_k}{s_1}\Big)^{\frac{3}{2}}-r_k- 10r_kF_k(s_1)< 0. 
 \end{equation}
On the other hand, by the divergence theorem and the co-area formula,
	\begin{align*}
\tau_k'(s_1)=\frac{1}{\Omega_{n-1}}\int_{\gamma_{s_1}}\phi^{-1}\Delta w_k&\leq \frac{1}{\Omega_{n-1}}\int_{\gamma_{s_1}}\phi^{-1}\Big(w_k+\frac{w_k^3}{8s_1^4}\Big)\\
&< (1+\frac{1}{100})^2\cdot \frac{29}{10}s_1^{n-1}r_k+\frac{3^3}{8}\cdot 4s_1^{n-5}r_k^3\\
&<  3s_1^{n-1}r_k+r_k^2s_1^{n-4}.
\end{align*}
Here the first inequality follows from \eqref{eq:upper bound for laplace wk}; in the second one, we used \eqref{eq:gammas round}, \eqref{eq:Ik s1:high}, \eqref{eq:wk on s1:high} and $\phi^{-1}\Big|_{\gamma_{s_1}}<1+\frac{1}{100}$; the last one is from $r_k/s_1\to0$. Together with \eqref{eq:wti tauk:high},
\[\wti\tau_k'(s_1)=\tau_k'(s_1) -9ns_1^{n-1}r_k-{(n-\frac{5}{2})}r_k^{\frac{3}{2}}s_1^{n-\frac{7}{2}}<0. \]
Combining with \eqref{eq:wti Ik'<0}, this contradicts the choice of $s_1$.

If $b_k=0$, then the same argument above gives that $\wti{\mc I}_k$ and $\wti\tau_k$ are decreasing on $[R_k,\epsilon]$. Hence  Proposition \ref{prop:bound wk:high} is proved.
\end{proof}

Note that $\mc I(s)$ is not well-defined in a small neighborhood of $b_k$. Here we use Proposition \ref{prop:2nd order estimates} and the divergence theorem to jump over such a small interval.
\begin{proposition}\label{prop:Ik bound pass zk}
	Suppose that $b_k\neq 0$. Then $\wti{\mc I}_k$ and  $\wti\tau_k$ are decreasing on $[2b_k,\epsilon]$. It follows that 
	\begin{gather*}
	\mc I_k(s)< 4r_k,  \ \ \ \ 
	\tau_k(s)<  6r_k^{n-1}+9s^nr_k+r_k^{\frac{3}{2}}s^{n-\frac{5}{2}} \ \ \text{ for  } \ s\in [2b_k,\epsilon].
	\end{gather*}
	In particular, we conclude that for sufficiently large $k$,
	\[ \max_{x\in \Sigma_k^1\cap \partial B(y_k,\epsilon;M_k) }w_k <\frac{9}{2}r_k.  \]
\end{proposition}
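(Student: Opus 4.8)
The plan is to mimic the proof of Proposition \ref{prop:bound wk both}, replacing the logarithmic barriers there by the power barriers built into \eqref{eq:wti Ik:high}--\eqref{eq:wti tauk:high}, and to run a first-failure (continuity) argument on the interval $[2b_k,\epsilon]$ starting from bounds at its left endpoint $s=2b_k$.

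\emph{Step 1: initial bounds at $2b_k$.} First I would transfer the conclusions of Proposition \ref{prop:bound wk:high} across the gap $[b_k/2,2b_k]$ on which $\mc I_k$ is not defined. From $\mc I_k(b_k/2)<3r_k$ and the Harnack inequality Proposition \ref{prop:2nd order estimates}\eqref{item:harnack} one obtains $w_k<(1+\delta)\,3r_k$ on $\Sigma_k^1\cap A(y_k,b_k/2,2b_k;M_k)\setminus B(z_k,b_k/4;M_k)$, hence, via \eqref{eq:gammas round}, a value $\mc I_k(2b_k)$ slightly below $3r_k$. For $\tau_k(2b_k)$ I would apply the divergence theorem to $\Delta w_k$ over that annular region, writing $\tau_k(2b_k)-\tau_k(b_k/2)-\tau_k(b_k/4;z_k)=\tfrac{1}{\Omega_{n-1}}\int\Delta w_k$ and bounding the right side using \eqref{eq:upper bound for laplace wk}, the estimate $w_k\lesssim r_k$, $\rho\geq b_k/2$, and the area bound $|\Sigma_k^1\cap A(y_k,b_k/2,2b_k;M_k)|\lesssim b_k^n$ (valid since $b_k^{-1}(\Sigma_k^1-y_k)$ is close to a hyperplane); the error is $\lesssim b_k^n r_k+r_k^3 b_k^{n-4}$, which is absorbed because $r_k/b_k\to 0$. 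Running the argument of Proposition \ref{prop:bound wk:high} at the second centre $z_k$ bounds $\tau_k(b_k/4;z_k)<3\wti r_k^{n-1}+\cdots\leq 3r_k^{n-1}+\cdots$, using $\wti r_k\leq r_k$. Combining, $\wti\tau_k(2b_k)<6r_k^{n-1}$ and $\wti{\mc I}_k(2b_k)<3r_k$ with room, since $F_k\geq 0$ and the $4s^{-1/2}r_k^{3/2}$ term is $o(r_k)$.

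\emph{Step 2: the continuity argument.} Set $s_2:=\sup\{\,s\in[2b_k,\epsilon]:\wti\tau_k'\leq 0\text{ and }\wti{\mc I}_k'\leq 0\text{ on }[2b_k,s]\,\}$ and suppose $s_2<\epsilon$. Since $\wti{\mc I}_k,\wti\tau_k$ are then decreasing on $[2b_k,s_2]$, Step 1 with \eqref{eq:Ik and wti Ik:high} and \eqref{eq:wti tauk:high} gives $\mc I_k(s_2)<3r_k$ and $\tau_k(s_2)<6r_k^{n-1}+9s_2^n r_k+r_k^{3/2}s_2^{n-5/2}$, and Proposition \ref{prop:2nd order estimates}\eqref{item:harnack} with \eqref{eq:gammas round} gives $w_k<3r_k$ on $\gamma_{s_2}$. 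Plugging these into Proposition \ref{prop:computation} (noting $\int_{\gamma_{s_2}}(\phi^{-1}-\phi)w_k\leq 3r_k\cdot\tfrac{\Omega_{n-1}s_2^n}{n}F_k(s_2)$) bounds $\mc I_k'(s_2)$ by $6(r_k/s_2)^{n-1}+9s_2 r_k+(r_k/s_2)^{3/2}+3r_kF_k(s_2)+C_0 s_2 r_k$; since $n-1\geq 2$ and $r_k/s_2,\,s_2\to 0$, the terms $6(r_k/s_2)^{n-1}$ and $(9+C_0)s_2 r_k$ are dominated respectively by $(r_k/s_2)^{3/2}$ and $r_k$, so the barrier derivative $-2s^{-3/2}r_k^{3/2}-r_k-10r_kF_k(s)$ from \eqref{eq:wti Ik:high} forces $\wti{\mc I}_k'(s_2)<0$. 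Likewise the divergence theorem, the co-area formula and \eqref{eq:upper bound for laplace wk} give $\tau_k'(s_2)=\tfrac{1}{\Omega_{n-1}}\int_{\gamma_{s_2}}\phi^{-1}\Delta w_k\leq 4s_2^{n-1}r_k+4r_k^3 s_2^{n-5}$, and since the barrier in \eqref{eq:wti tauk:high} contributes $-9ns_2^{n-1}r_k-(n-\tfrac{5}{2})r_k^{3/2}s_2^{n-7/2}$, with $n\geq 3$ and $r_k/s_2\to 0$ one gets $\wti\tau_k'(s_2)<0$. This contradicts the choice of $s_2$, so $s_2=\epsilon$, and the claimed bounds on $\mc I_k$ and $\tau_k$ on $[2b_k,\epsilon]$ follow from the monotonicity together with \eqref{eq:Ik and wti Ik:high} and \eqref{eq:wti tauk:high}. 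Finally, $\mc I_k(\epsilon)<4r_k$ together with Proposition \ref{prop:2nd order estimates}\eqref{item:harnack} (applicable since the nearly round sphere $\gamma_\epsilon$ sits at definite distance from $z_k$) yields $\max_{\Sigma_k^1\cap\partial B(y_k,\epsilon;M_k)}w_k<\tfrac{9}{2}r_k$ for $\delta,\epsilon$ sufficiently small.

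\emph{Expected main obstacle.} The hard part will be the bookkeeping in Step 2: verifying that every error term --- the $F_k$ terms encoding the non-flatness of $\Sigma_k^1$, the $C_0 s$ terms from the ambient curvature of $M_k$, and, above all, the nonlinear term $\tfrac{1}{8}w_k^3/\rho^4$ from the genuinely nonlinear inequality \eqref{eq:upper bound for laplace wk} (the feature absent from Colding--Minicozzi \cite{CM02}) --- is strictly dominated by the negative contribution of the barriers engineered into \eqref{eq:wti Ik:high}--\eqref{eq:wti tauk:high}. This rests on the scale separations $r_k/b_k\to 0$, $s\to 0$, $\int F_k<\epsilon$, and on the exponents $\tfrac{3}{2}$ and $n-\tfrac{5}{2}$ having been chosen precisely so that the resulting differential inequalities close; a secondary difficulty is Step 1, where the interval $[b_k/2,2b_k]$ around the second catenoid must be bridged by a divergence-theorem estimate rather than by monotonicity.
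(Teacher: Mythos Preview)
Your proposal is correct and follows essentially the same approach as the paper: bridge the gap $[b_k/2,2b_k]$ by Harnack plus the divergence theorem (picking up the contribution $\tau_k(b_k/4;z_k)$ from the second catenoid) to get initial bounds $\wti{\mc I}_k(2b_k)\lesssim 3r_k$ and $\wti\tau_k(2b_k)<6r_k^{n-1}$, then rerun the continuity argument of Proposition~\ref{prop:bound wk:high} on $[2b_k,\epsilon]$. One harmless numerical slip: the Harnack step yields $\mc I_k(2b_k)$ slightly \emph{above} $3r_k$ (the paper records $<\tfrac{11}{3}r_k$), not below, but the barriers in \eqref{eq:wti Ik:high}--\eqref{eq:wti tauk:high} have enough room that the rest of your Step~2 goes through unchanged.
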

\begin{proof}
By applying Proposition \ref{prop:2nd order estimates} \eqref{item:harnack}, and then Proposition \ref{prop:bound wk:high}, together with \eqref{eq:gammas round}, we have
	\begin{equation}\label{eq:bound wk on annuli:high}
	 \max\{w_k; x\in \Sigma_k^1\cap A(y_k,b_k/2,2b_k;M_k)\setminus B(z_k,b_k/4;M_k)  \}\leq (1+\delta)\mc I_k\big(\frac{b_k}{2}\big) \leq \frac{10}{3}r_k,
	 \end{equation}
as well as 
	\begin{equation}\label{eq:Ik on 2bk:high}
	\mc I_k(2b_k)< \frac{11r_k}{3}.
	\end{equation}
	Moreover, by the divergence theorem and \eqref{eq:upper bound for laplace wk},
	\begin{align*}
	&\ \ \ \tau_k(2b_k)-\tau_k(b_k/2)-\tau_k(b_k/4;z_k)= \frac{1}{\Omega_{n-1}}\int_{\Sigma_k^1\cap A(y_k,b_k/2,2b_k;M_k)\setminus B(z_k,b_k/4;M_k)} \Delta w_k\,dx\\   
	& \leq \frac{1}{\Omega_{n-1}}\int_{\Sigma_k^1\cap A(y_k,b_k/2,2b_k;M_k)\setminus B(z_k,b_k/4;M_k)}\Big(w_k+\frac{w_k^3}{8\rho^4}\Big)\,dx \\ 
	&<  \frac{10}{3}r_k\cdot (1+\frac{1}{100})(2b_k)^n+(1+\frac{1}{100})(\frac{10}{3})^3\cdot \frac{1}{8}r_k^3(2b_k)^{n-4}<4r_k(2b_k)^n+r_k^2(2b_k)^{n-3}.
	\end{align*}
	Here 
	\[\tau_k(s;z_k)=\frac{1}{\Omega_{n-1}}\int_{\Sigma_k^1\cap\partial B(z_k,s,M_k)}\langle \nabla w_k, \bm \eta_z\rangle , \]
	and $\bm \eta_z $ is the unit normal to $\partial B(z_k,s;M_k)\cap \Sigma_k^1$. The first inequality is from \eqref{eq:upper bound for laplace wk}; we used \eqref{eq:bound wk on annuli:high} in the second one, and the area upper bound is from the fact that $b_k^{-1}(\Sigma_k^1-y_k)$ is very close to a hyperplane. Recall that $\Sigma_k$ is very close to a catenoid with radius $\wti r_k\leq r_k$ around $z_k$. Then by the same argument as in Proposition \ref{prop:bound wk:high}, we also have 
	\[ \tau_k(b_k/4;z_k)< 3\wti r_k^{n-1}+ 9\Big(\frac{b_k}{4}\Big)^n\wti r_k+\Big(\frac{b_k}{4}\Big)^{n-\frac{5}{2}}\wti r_k^{\frac{3}{2}}\leq 3r_k^{n-1}+ 9\Big(\frac{b_k}{4}\Big)^nr_k+\Big(\frac{b_k}{4}\Big)^{n-\frac{5}{2}}r_k^{\frac{3}{2}}.  \]
	It follows that 
	\begin{align*}\label{eq:tau 2bk:high}
	\tau_k(2b_k)&< \tau_k(b_k/2)+\tau_k(b_k/4;z_k)+4r_k(2b_k)^n+r_k^2(2b_k)^{n-3}\\
	&<  6r_k^{n-1}+ 9\Big(\frac{b_k}{2}\Big)^nr_k+\Big(\frac{b_k}{2}\Big)^{n-\frac{5}{2}}r_k^{\frac{3}{2}}+9\Big(\frac{b_k}{4}\Big)^nr_k+\Big(\frac{b_k}{4}\Big)^{n-\frac{5}{2}}r_k^{\frac{3}{2}}+4r_k(2b_k)^n+r_k^2(2b_k)^{n-3}\nonumber\\
	&< 6r_k^{n-1}+9(2b_k)^nr_k+ (2b_k)^{n-\frac{5}{2}}r_k^{\frac{3}{2}},\nonumber
	\end{align*}
	which implies that 
	\[ \wti\tau_k(2b_k) =\tau_k(2b_k)-9(2b_k)^nr_k-r_k^{\frac{3}{2}}(2b_k)^{n-\frac{5}{2}}< 6r_k^{n-1}.    \]
	Then one can prove that $\wti{\mc I}_k(s)$ and $\wti\tau_k(s)$ are decreasing on $[2b_k,\epsilon]$ by the same argument as in Proposition \ref{prop:bound wk:high}. 
	
	Applying Proposition \ref{prop:2nd order estimates}\eqref{item:harnack} and \eqref{eq:gammas round} again,
	\[ \max_{x\in \Sigma_k^1\cap \partial B(y_k,\epsilon;M_k) }w_k\leq (1+1/10)\mc I_k(\epsilon) \leq \frac{9}{2}r_k.  \]
	This completes the proof of Proposition \ref{prop:bound wk:high}.
\end{proof}

Then using the same argument as Theorem \ref{thm:hausdorff distance}, one can prove the Hausdorff distance upper bounds between $\Sigma_k$ and $S_0^n$. The only modification is to replace Propositions \ref{prop:bound wk} and \ref{prop:bound wk both} by Propositions \ref{prop:bound wk:high} and \ref{prop:Ik bound pass zk}.
\begin{theorem}\label{thm:hausdorff distance:high}
	For all sufficiently large $k$,
	\[  \max_{\Sigma_k}\dist_{M_k}(x,S^n_0)< 5r_k.\]
\end{theorem}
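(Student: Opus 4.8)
The plan is to follow the blueprint of Theorem~\ref{thm:hausdorff distance} almost verbatim, substituting the high-dimensional monotonicity package (Propositions~\ref{prop:bound wk:high} and~\ref{prop:Ik bound pass zk}) for its two-dimensional counterparts. First I would invoke Sharp's compactness together with the index bound to reduce to the structure described in Remark~\ref{rmk:structure of Sigmak}: $\Sigma_k$ smoothly converges to $2S_0^n$ away from the one or two points of $\mc W$, and inside a fixed ball $B_\epsilon(p)$ around each such point $\Sigma_k$ is modeled on a small catenoid of radius $r_{k,j}$, with $\Sigma_k^1,\Sigma_k^2$ graphs over $S_0^n$ outside the catenoid balls and $\Sigma_k^2$ a minimal graph over $\Sigma_k^1$ with positive graph function $w_k$. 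As in the proof of Theorem~\ref{thm:hausdorff distance}, I would write $\Sigma_k$ as two minimal graphs $u_k^1,u_k^2$ over $S_0^n\setminus\mc W$, normalize by $\Lambda_{k,\epsilon}=\max_{x\in\mc W}\max_{\partial B_\epsilon(x)\cap S_0^n}\{-u_k^1,u_k^2\}$, and observe that the normalized functions converge to Jacobi fields on $S_0^n\setminus\mc W$; the minimal foliation argument (Property~\eqref{item:cmc foliation}) forces these Jacobi fields to be bounded, hence to extend across $\mc W$, hence to be constants $c_1\le 0\le c_2$ with $c_2=1$ (after normalization), using that $\Sigma_k$ is not one-sided to $S_0^n$.

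The core step is then the estimate $\Lambda_{k,\epsilon}\le C r_k$. On $\partial B_\epsilon(\mc W)$ the width $u_k^2-u_k^1$ is comparable (up to the factor $1-c_1\ge 1$ from the smooth convergence on the annular region $B_{2\epsilon}(\mc W)\setminus B_{\epsilon/2}(\mc W)$) to $w_k$ restricted to $\Sigma_k^1\cap\partial B(y_k,\epsilon;M_k)$. By Proposition~\ref{prop:Ik bound pass zk} (in the case $b_k\neq 0$, passing over the interval around $b_k$ via the divergence-theorem jump) or Proposition~\ref{prop:bound wk:high} directly (in the case $b_k=0$), this quantity is at most $\tfrac{9}{2}r_k$, so $\Lambda_{k,\epsilon}\le(\tfrac{9}{2}+o(1))r_k$. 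Finally a minimal foliation argument inside $B_\epsilon(p)$ (to control the catenoid region, whose intrinsic height is $O(r_k)$ in these dimensions since $\int_1^\infty ds/\sqrt{s^{2(n-1)}-1}<\infty$) together with the Harnack inequality outside propagates the bound on $\partial B_\epsilon(\mc W)$ to all of $\Sigma_k$, giving $\max_{\Sigma_k}\dist_{M_k}(x,S_0^n)<5r_k$ for $k$ large, where the slack between $\tfrac92$ and $5$ absorbs the foliation and Harnack constants.

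I expect the main obstacle to be bookkeeping rather than a genuinely new idea: one must check that every place where the $n=2$ proof used $r_k|\log r_k|$ bounds is replaced correctly by the $O(r_k)$ bounds coming from the modified auxiliary functions $\wti{\mc I}_k(s)=\mc I_k(s)+4s^{-1/2}r_k^{3/2}-sr_k-10r_k\int F_k$ and $\wti\tau_k(s)=\tau_k(s)-9s^nr_k-r_k^{3/2}s^{n-5/2}$ defined in~\eqref{eq:wti Ik:high}--\eqref{eq:wti tauk:high}, and in particular that the catenoid-height integral converges (so the analogue of~\eqref{eq:Ik Rk} becomes the constant bound~\eqref{eq:Ik Rk:high} rather than a logarithmic one). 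The one point requiring slight care, exactly as in the two-dimensional case, is the region near $B(z_k,b_k;M_k)$ where $\mc I_k$ is not well-defined: there I would reuse the divergence-theorem estimate of Proposition~\ref{prop:Ik bound pass zk}, controlling $\tau_k(2b_k)-\tau_k(b_k/2)-\tau_k(b_k/4;z_k)$ by integrating the PDE-inequality~\eqref{eq:upper bound for laplace wk} over the intermediate annulus and using that $b_k^{-1}(\Sigma_k^1-y_k)$ is nearly a hyperplane, so that the area factor is controlled and the cubic term $w_k^3/\rho^4$ is negligible since $r_k/b_k\to 0$. Once all constants are tracked, the proof is a word-for-word transcription of Theorem~\ref{thm:hausdorff distance} with the substitutions indicated.
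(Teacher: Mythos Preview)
Your proposal is correct and matches the paper's approach exactly: the paper's proof of Theorem~\ref{thm:hausdorff distance:high} consists of a single sentence saying to repeat the argument of Theorem~\ref{thm:hausdorff distance} verbatim, replacing Propositions~\ref{prop:bound wk} and~\ref{prop:bound wk both} by Propositions~\ref{prop:bound wk:high} and~\ref{prop:Ik bound pass zk}. Your write-up is in fact more detailed than the paper's own proof.
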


\appendix

\section{Minimal graphs}\label{sec:graph functions}
Let $\mc N\subset M$ be a two-sided minimal hypersurface in $(M^{n+1},g)$ with a chosen unit normal vector field $\n$. Let $d$ be the oriented distance function to $\mc N$, and $\mc N_s$ be the level set of $d$. Then for some small $\mk d>0$,  $\{\mc N_{t}\}_{t\in(-\mk d, \mk d)}$ forms a foliation of a neighborhood of $\mc N$. Denote by $\wti \nabla$ and $\nabla $ the Levi-Civita connections of $M$ and $\mc N_s$ respectively. Then $\wti \nabla d$ is the unit normal vector field on $\mc N_s$. %In this section, we always assume that $\mc N$ and $M$ satisfy the following conditions, which hold true in this paper:
In this section, we always assume that $M$ and $\mc N$ satisfy the following conditions: %which hold true in this paper:
\begin{equation}\label{eq:appendix assumptions}
|A|_{\mc N}<\epsilon/{\mk d},\quad |R|\leq 1, \quad |\wti\nabla R|<C(n). 
\end{equation}
Here $R$ is the Riemannian curvature tensor of $M$; $\epsilon$ is a small constant depending only on $n$, e.g. $\epsilon=10^{-1000n}$; $C(n)>1$ is a constant that can be changed from line to line.

Let $\Sigma$ be a minimal graph over $\mc N$ with
\[ \max_{x\in \Sigma}\dist_{M}(x,\mc N)< \mk d.\]
Denote by $u$ the graph function. Then such a function can be extended to a neighborhood of $\mc N$ by taking 
\[ u(p, s)= u(p)-s.  \]
When restricted to $\mc N_s$, $u$ is the graph function of $\Sigma$ over $\mc N_s$. Moreover, $u = 0$ when restricted to $\Sigma$, and hence $\wti \nabla u|_{\Sigma}$ is the normal vector field on $\Sigma$ and is non-zero everywhere.

For $p\in\Sigma\cap \mc N_s$, let $\{e_i\}$ be an orthonormal basis of $T_p\mc N_s$. Let $A$ and $H$ denote respectively the second fundamental form and mean curvature of $\mc N_s$ with respect to $\wti\nabla d$. A direct computation gives that 
\begin{gather*}
 \frac{\partial }{\partial s}H= -\Ric(\wti \nabla d,\wti \nabla d)-|A|^2;\\
\wti \nabla_{\wti \nabla d}\wti \nabla u=\wti \nabla_{\wti \nabla d} \nabla u=-A(\nabla u); \ \ \ \frac{\partial }{\partial s}|\wti\nabla u|^2=-2A(\nabla u,\nabla u);\\
 \dv_MA(\nabla u)=\dv_{\mc N_s}A(\nabla u)=\langle \nabla^2 u,A\rangle +\Ric(\nabla u,\wti \nabla d)+\langle \nabla u,\nabla H\rangle;\\
 \frac{\partial}{\partial s}\dv_M\wti \nabla u =-\Ric(\wti \nabla d,\wti \nabla u+\nabla u)+|A|^2-2\langle \nabla^2 u,A\rangle-\langle \nabla u,\nabla H\rangle;\\
 \frac{\partial}{\partial s}|\nabla H|^2=-2\langle\nabla (\Ric (\wti \nabla d,\wti \nabla d)+|A|^2),\nabla H\rangle -2A(\nabla H,\nabla H);\\
 \frac{\partial }{\partial s}|A|^2=2R(\wti \nabla d,e_i,\wti\nabla d,e_j)A(e_i,e_j)-2A(e_i,e_j)A(e_j,e_k)A(e_k,e_i);\\
 \frac{\partial }{\partial s}|\nabla A|^2=2R_{sjsk,i}A_{jk,i}+ 4R_{silk}A_{jl}A_{ki,j}+4R_{silj}A_{kl}A_{jk,i}-2A_{jk,i}A_{jk,l}A_{li}-4A_{ik,j}A_{lk,j}A_{il};\\
 \frac{\partial }{\partial s}|\nabla^2 u|^2=-2A_{kj,i}u_ku_{ij}+2R_{sikj}u_ku_{ij}-4u_{ij}u_{kj}A_{ik}.
\end{gather*}

\begin{comment}
These also implies that 
\begin{align*}
\frac{\partial}{\partial s}\dv_M\wti \nabla u&=\frac{\partial}{\partial s}(\langle\wti\nabla_{e_i}\wti\nabla u,e_j\rangle g^{ij}+\langle\wti  \nabla_{\wti \nabla d}\wti \nabla u,\wti \nabla d\rangle)\\
&=-\Ric(\wti \nabla d,\wti \nabla u)-\langle\wti  \nabla^2u,A\rangle+\dv_M\wti \nabla_{\wti \nabla d}\wti \nabla u\\
&=-\Ric(\wti \nabla d,\wti \nabla u)-\langle\wti \nabla^2u,A\rangle-\langle \nabla^2 u,A\rangle -\Ric(\nabla u,\wti \nabla d)-\langle \nabla u,\nabla H\rangle\\
&=-\Ric(\wti \nabla d,\wti \nabla u+\nabla u)+|A|^2-2\langle \nabla^2 u,A\rangle-\langle \nabla u,\nabla H\rangle.
\end{align*}
\end{comment}

We pause to state a standard differential inequality, whose proof is left to readers.
\begin{lemma}\label{lem:dif ineq}
Let $f:[0, \mk d]\to\mb R$ be a non-negative differentiable function. Suppose that 
\[ f'(t)\leq a+bf(t) \]
for real numbers $a$ and $b$. Then for each $t\in[0, \mk d]$,
\[f(t)\leq e^{bt}f(0)+\frac{a}{b}(e^{bt}-1).\]	
	\end{lemma}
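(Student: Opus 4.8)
The plan is to run the classical integrating-factor argument for linear differential inequalities (Grönwall). First I would treat the main case $b\neq 0$. The key step is to introduce the auxiliary function
\[ \psi(t):=e^{-bt}\Big(f(t)+\frac{a}{b}\Big),\qquad t\in[0,\mk d], \]
and to compute
\[ \psi'(t)=e^{-bt}\big(f'(t)-bf(t)-a\big). \]
By the hypothesis $f'(t)\le a+bf(t)$ the bracket is $\le 0$, so $\psi'\le 0$ on all of $[0,\mk d]$. A differentiable function with non-positive derivative on an interval is non-increasing (this is the mean value theorem and needs no integrability assumption on $f'$), hence $\psi(t)\le\psi(0)=f(0)+\tfrac{a}{b}$. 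Multiplying through by $e^{bt}>0$ and rearranging then yields exactly $f(t)\le e^{bt}f(0)+\frac{a}{b}(e^{bt}-1)$.

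Next I would dispose of the degenerate case $b=0$ separately, so as never to divide by $b$: there the hypothesis reads $f'(t)\le a$, so $t\mapsto f(t)-at$ is non-increasing and $f(t)\le f(0)+at$, which is precisely the $b\to 0$ limit of the stated bound (reading $\frac{a}{b}(e^{bt}-1)$ as $at$). I would also note that the non-negativity of $f$ is not actually used in the argument; it is included only because in the applications in Appendix~\ref{sec:graph functions} the functions $f$ to which the lemma is applied (squared norms of curvature-type tensors, area-type quantities) are automatically non-negative.

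In short, there is no serious obstacle here: the statement is the elementary linear Grönwall inequality. The only things to be a little careful about are choosing the multiplier $e^{-bt}$ so that the computation of $\psi'$ collapses cleanly, and handling $b=0$ by hand. As an alternative route one could instead set $g(t)=e^{-bt}f(t)$, observe $g'(t)\le a e^{-bt}$, and integrate from $0$ to $t$; I prefer the monotonicity formulation above since it avoids any discussion of integrability of $f'$.
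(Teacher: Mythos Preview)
Your argument is correct; this is exactly the standard integrating-factor proof of the linear Gr\"onwall inequality, and your remarks on the degenerate case $b=0$ and on the irrelevance of the non-negativity hypothesis are accurate. The paper itself does not supply a proof of this lemma (it is explicitly ``left to readers''), so there is nothing to compare against.
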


 Now Lemma \ref{lem:dif ineq} can be applied to bound those terms on $\mc N_s$ by their restriction on $\mc N$.
 
\begin{lemma}
	\begin{gather*}
	   |\nabla u(x,t)|\le 2|\nabla u(x)|;\ \ \ \	|A(x,t)|\leq 2|A(x)|+2t;\\
	 \Big||A(x,t)|^2-|A(x)|^2\Big|\leq C(n)\Big(|A(x)|t+t+|A(x)|^3t\Big);\\
	  |\nabla A(x,t)|\leq C(n)\Big(|\nabla A(x)|+t(|A(x)|+1)\Big);\\
	    |\nabla H(x,t)|\leq C(n)\Big(t+t(|A(x)|+t)|\nabla A(x)|+|A(x)|^2t^2\Big);\\
	     |\nabla ^2u(x,t)|\leq C(n)\Big(|\nabla^2u(x)|+t(1+|\nabla A(x)|)|\nabla u(x)|\Big).
	\end{gather*}
	\end{lemma}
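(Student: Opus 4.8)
The plan is to read off all six bounds from the evolution equations displayed just above by integrating along the foliation $\{\mc N_s\}_{s\in[0,\mk d]}$ and applying Lemma~\ref{lem:dif ineq} (together with its standard time-dependent Duhamel refinement $f(t)\le e^{bt}f(0)+\int_0^t e^{b(t-s)}a(s)\,ds$ when $a$ depends on $s$). The six quantities are estimated in increasing order of differential complexity, each one using the hypotheses \eqref{eq:appendix assumptions} and the bounds already proved to control the inhomogeneous and higher-order terms. Throughout, the freedom to take $\mk d$ as small as we wish, together with the normalization $|A|_{\mc N}\,\mk d<\epsilon$, is what allows cubic terms to be absorbed into linear ones.

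First I would bound $|A(x,t)|$: the equation for $\partial_s|A|^2$ involves only $R$ and $A$, so it is self-contained, and $|R|\le 1$ gives $\tfrac{d}{ds}|A|\le 1+|A|^2$; comparison with the ODE $y'=1+y^2$ (which does not blow up on $[0,\mk d]$ once $\mk d$ is small) yields $|A(x,t)|\le(1+\epsilon)|A(x)|+2t\le 2|A(x)|+2t$, and in particular $|A|$ is uniformly small on the foliation. Integrating $\bigl|\partial_s|A|^2\bigr|\le 2|A|+2|A|^3$ over $[0,t]$ and substituting this bound, then collapsing the powers of $t$ using $t\le\mk d<1$ and $|A(x)|t<\epsilon$, gives the third estimate $\bigl||A(x,t)|^2-|A(x)|^2\bigr|\le C(n)(|A(x)|t+t+|A(x)|^3t)$. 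For $|\nabla u|$ I would use that $u(p,s)=u(p)-s$ forces $\langle\wti\nabla u,\wti\nabla d\rangle=-1$, so $|\wti\nabla u|^2=1+|\nabla u|^2$ and hence $\partial_s|\nabla u|^2=-2A(\nabla u,\nabla u)\le 2|A||\nabla u|^2$; Gronwall then gives $|\nabla u(x,t)|^2\le e^{2\int_0^t|A|}|\nabla u(x)|^2\le 2|\nabla u(x)|^2$ because $\int_0^t|A(x,s)|\,ds\le 2|A(x)|t+t^2<3\epsilon$.

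The remaining three are bootstraps. For $|\nabla A|$: from $\partial_s|\nabla A|^2$, using $|\wti\nabla R|<C(n)$, $|R|\le1$ and the smallness of $|A|$ one reduces to $\tfrac{d}{ds}|\nabla A|\le C(n)+C|\nabla A|$, so Lemma~\ref{lem:dif ineq} gives $|\nabla A(x,t)|\le C(n)(|\nabla A(x)|+t)$, which implies the stated (slightly weaker) bound. For $|\nabla H|$: here the key point is that $\mc N$ is minimal, so $H|_{\mc N}=0$ and therefore $\nabla H|_{\mc N}=0$; the equation for $\partial_s|\nabla H|^2$ then gives $|\nabla H(x,t)|\le 2\int_0^t C(n)\bigl(1+|A||\nabla A|\bigr)\,ds$, and substituting the bounds for $|A|$ and $|\nabla A|$ and integrating produces exactly $C(n)\bigl(t+t(|A(x)|+t)|\nabla A(x)|+|A(x)|^2t^2\bigr)$. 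Finally, for $|\nabla^2 u|$: the equation for $\partial_s|\nabla^2u|^2$ gives $\tfrac{d}{ds}|\nabla^2u|\le C(|\nabla A|+1)|\nabla u|+C|A||\nabla^2u|$, whence Lemma~\ref{lem:dif ineq} together with $|\nabla u(x,s)|\le 2|\nabla u(x)|$ and the bound on $|\nabla A|$ yields $|\nabla^2u(x,t)|\le C(n)\bigl(|\nabla^2u(x)|+t(1+|\nabla A(x)|)|\nabla u(x)|\bigr)$.

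The individual computations are routine; the one point that needs genuine care — and the main obstacle — is keeping the nonlinear terms under control, i.e. verifying a priori that $|A|$ stays small and that $|\nabla A|$, $|\nabla u|$, $|\nabla^2u|$ stay finite throughout $[0,\mk d]$, so that the cubic and quadratic terms in the evolution equations may legitimately be absorbed into the linear ones before Lemma~\ref{lem:dif ineq} is applied. This is precisely where the smallness of $\mk d$, the normalization $|A|_{\mc N}\,\mk d<\epsilon$, and the non-blowup of the comparison ODE $y'=1+y^2$ are all used.
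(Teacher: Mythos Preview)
Your proposal is correct and follows essentially the same approach as the paper: both read off the bounds from the displayed evolution equations, start with the self-contained ODE $y'=1+y^2$ for $|A|$ (the paper writes this via $\arctan$), and then bootstrap the remaining quantities using Lemma~\ref{lem:dif ineq}/Gronwall together with the smallness $|A|\mk d<\epsilon$. The paper only spells out the $|A|$ and $|\nabla A|$ cases (using the regularization $f=\sqrt{1+|\nabla A|^2}$ to avoid differentiating the norm at zero) and declares the rest ``similar''; your sketch fills in exactly those omitted cases, including the observation $\nabla H|_{\mc N}=0$ that drives the $|\nabla H|$ bound.
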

\begin{proof}
Note that $\frac{\partial }{\partial s}|A|^2\leq 2|A|+2|A|^3$. Then by classical OD-inequalities, we have
\[ \arctan |A(x,t)|\leq \arctan |A(x)|+t. \]	
Since $|tA(x)|\leq \delta|A(x)|<\epsilon$,  
\begin{equation}\label{eq:bound |A|}
  |A|(x,t)\leq 2|A(x)|+2t. 
  \end{equation}
The others can be derived similarly as follows: note that 
\[ \frac{\partial }{\partial s}|\nabla A|^2\leq C(n)\Big(|\nabla A(x,t)|+|A(x,t)||\nabla A(x,t)|+ |A(x,t)||\nabla A(x,t)|^2\Big).  \]
Fix $x\in\mc N$. Now let $f(t)=\sqrt{1+|\nabla A(x,t)|^2}$. It follows that 
\[  f'(t)\leq C(n)\Big(1+|A(x,t)|+|A(x,t)|f\Big)\leq C(n)\Big(1+|A(x)|+(1+|A(x)|)f(t)\Big) \]
Here the second inequality comes from \eqref{eq:bound |A|}. Then Lemma \eqref{lem:dif ineq} can be applied to obtain
\[  |\nabla A(x,t)|+1\leq e^{C(n)(1+|A(x)|)t}(1+|\nabla A(x)|)+e^{C(n)(1+|A(x)|)t}-1,   \]
which yields
\[ |\nabla A(x,t)|\leq 2|\nabla A(x)|+C(n)(1+|A(x)|)t.    \]	
Here we used the condition that $C(n)|A(x)|t\leq C(n)|A(x)|\mk d\ll 1$ by \eqref{eq:appendix assumptions}.
	\end{proof}

 Now we are ready to derive our inequality for $u$. Indeed, by the minimality of $\Sigma$,
\begin{align*}
0= \dv_{\Sigma}\wti \nabla u&=\dv_{M}\wti \nabla u-\wti \nabla^2u\Big(\frac{\wti\nabla u }{|\wti\nabla u|},\frac{\wti\nabla u }{|\wti\nabla u|}\Big)\\
&=\dv_{M}\wti \nabla u-  (A+ \nabla^2u)\Big(\frac{\nabla u }{|\wti \nabla u|},\frac{\nabla u }{|\wti\nabla u|}\Big).
\end{align*}
It follows that 
\begin{align*}
   \frac{(A+\nabla ^2u)(\nabla u,\nabla u)}{1+|\nabla u|^2}&=\dv_{\mc N}\nabla u+\int_0^s\frac{\partial }{\partial t}\dv_{M}\wti \nabla u \\
   &=\Delta_{\mc N} u+\int_0^s-\Ric(\wti \nabla d,\wti \nabla u+\nabla u)+|A|^2-2\langle \nabla^2 u,A\rangle-\langle \nabla u,\nabla H\rangle\,dt.
      \end{align*}
Since $\wti \nabla u=\nabla u-\wti\nabla d$, then
\[  \Ric(\wti \nabla d,\wti\nabla u+\nabla u)=-\Ric(\wti\nabla d, \wti \nabla d)+2\Ric(\wti\nabla d,\nabla u). \]                 
It follows that 
\begin{align*}     
  (\Delta u&+|A|^2u)(x,0)+\int_0^s\Ric(\wti\nabla d,\wti\nabla d)\,dt\\
  &\leq (|A|+|\nabla^2u|)|\nabla u|^2(x,s)+\int_0^s2n|\nabla u|+\Big||A(x)|^2-|A(x,t)|^2\Big|+2|\nabla^2 u||A| +|\nabla u||\nabla H|\,dt. 
  \end{align*}
Combining all of them, we conclude that
\begin{equation}\label{eq:minimal graph equation}
\begin{aligned} 
\Delta_{\mc N} u+|A|_{\mc N}^2u+&\int_0^{u(x)}\Ric(\wti\nabla d,\wti\nabla d)\,dt\\
&\leq 8|A|_{\mc N}|\nabla u|^2+|A|_{\mc N}^3u^2+|\nabla^2u||A|_{\mc N}u+C(n)\Big(u+|\nabla u|+|\nabla u|^3\Big)u+\\
&+C(n)\Big\{|\nabla u(x)|^2|\nabla^2 u|+|\nabla u|^3|\nabla A|u+|A|u^2(1+|\nabla u|)+|\nabla^2u|u^2+\\
&+|\nabla A||\nabla u|u^2(|A|+u)+|\nabla u|u^2(1+|A||\nabla u|+|\nabla A|u+|A|^2u)\Big\}_{\mc N}.
\end{aligned}
\end{equation}

\section{Catenoids}\label{sec:catenoids}
In this section, we collect some basic results for the catenoids. In $\mb R^{n+1}$, given $r>0$, there is an associated catenoid given by
\[ |x_{n+1}|=\int_r^t\frac{ds}{\sqrt{(s/r)^{2(n-1)}-1} }, \ \ t=\sqrt{x_1^2+x_2^2+\cdots x_n^2}. \]
Here $r$ is called the {\em radius}. When $r=1$, this catenoid is said to be {\em standard}. Let $t,h,R$ be the solutions of 
\begin{equation}\label{eq:catenoid equation}
	  t^2+h^2=R^2 \ \ \ \text { and }\ \ \ |h|=\int_1^t\frac{ds}{\sqrt{s^{2(n-1)}-1} }.  
	  \end{equation}
For a standard catenoid $\mc C$, we have that
\begin{enumerate} 
	\item\label{item:h bound} for $n=2$, $\log t<|h|<\log(2t)$ and for $n\geq 3$, $|h|<1.31103$;
	\item $|A(x)|\leq \sqrt{n(n-1)}$ and $|x||A|\to0$ as $|x|\to\infty$;
	\item for each connected component $\gamma$ of $\partial B_R(0)\cap \mc C$, it bounds an $n$-dimensional ball $D$ with area $\frac{\Omega_{n-1}}{n}t^n$, where $\Omega_{n-1}$ is the area of unit $(n-1)$-sphere;
	\item for $4\leq (n+1)\leq 7$,
 \begin{equation}\label{def:An}
   \mc A_n:=\lim_{R\to\infty} \Area (\mc C\cap B_R(0))-2\cdot \frac{\Omega_{n-1}}{n}t^n>0;
   \end{equation}
 \item for $n=2$,
 	\begin{equation}\label{eq:area difference}
 \Area(\mc C\cap B_R(0))-2\Area(D)=2\pi(|h|+t\sqrt{t^2-1})-2\pi t^2> 2\pi(\log R-1).   
 \end{equation}  
	\end{enumerate}
Note that outside $B_2(0)$, $\mc C$ has two connected components $\Sigma^1$ and $\Sigma^2$. Here we assume $\Sigma^1\subset \{x_{n+1}<0\}$. Then $\Sigma^2$ is a graph over $\Sigma^1$. Let $w$ denote the graph function. Denote by $\bm \eta$ the unit outward normal to $\partial B_R(0)\cap \Sigma^1$ of $B_R(0)\cap \Sigma^1$.
\begin{proposition}\label{prop:standard nabla w:high dim}
	\[  \lim_{R\to\infty} \frac{1}{\Omega_{n-1}}\int_{\Sigma_k^1\cap\partial B_R(0)}\langle \nabla w,\bm \eta\rangle =2.  \]
\end{proposition}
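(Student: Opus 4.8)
The plan is to reduce the limit to an explicit elementary integral using only the asymptotic geometry of the catenoid end, with no appeal to any PDE. Parametrize the lower sheet $\Sigma^1\subset\{x_{n+1}<0\}$ as the graph $x_{n+1}=\phi(|x'|)$ over $\{x_{n+1}=0\}$, where $x'=(x_1,\dots,x_n)$, $t=|x'|$, and $\phi(t)=-h(t):=-\int_1^t(s^{2(n-1)}-1)^{-1/2}\,ds$; by reflection $\Sigma^2$ is the graph of $-\phi$. Put $N(t)=\sqrt{1+\phi'(t)^2}$, so the unit normal $\nu(p)$ of $\Sigma^1$ pointing towards $\Sigma^2$ at $p=(x',\phi(|x'|))$ is $N(t)^{-1}(-\phi'(t)\,x'/t,\,1)$. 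By rotational symmetry $w$ depends only on $t$, and the requirement that $p+w(t)\,\nu(p)$ lie on $\Sigma^2$ becomes, after a short computation, the implicit relation
\[ \frac{w(t)}{N(t)}=h(t)+h\Big(t+\frac{w(t)\,h'(t)}{N(t)}\Big), \]
from which $w>0$ is immediate and, using $0\le h\big(t+\tfrac{wh'}{N}\big)-h(t)=O\big(t^{-2(n-1)}w\big)$, one reads off $w(t)=2h(t)\,(1+o(1))$ as $t\to\infty$.

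Next I would extract the asymptotics of $w'$. Differentiating the implicit relation (all functions are smooth) and inserting the elementary catenoid expansions $h'(t)=(t^{2(n-1)}-1)^{-1/2}=t^{-(n-1)}(1+O(t^{-2(n-1)}))$, $|h''(t)|=O(t^{-n})$, $N(t)=1+O(t^{-2(n-1)})$ — which hold uniformly for $2\le n\le6$, the only case distinction being that $h$ is bounded when $n\ge3$ while $h(t)\sim\log(2t)$ when $n=2$ — yields
\[ w'(t)=2h'(t)+o\big(t^{-(n-1)}\big)\qquad\text{as }t\to\infty, \]
and hence $w'(t)\,t^{n-1}=2h'(t)\,t^{n-1}+o(1)=2(1-t^{-2(n-1)})^{-1/2}+o(1)\to2$.

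It remains to identify the geometry of the slice $\Gamma_R:=\partial B_R(0)\cap\Sigma^1$. Since $\phi(t)^2=o(t^2)$ and $t\mapsto t^2+\phi(t)^2$ is strictly increasing, the equation $|x'|^2+\phi(|x'|)^2=R^2$ has a unique solution $t=\rho$ with $\rho=\rho(R)\to\infty$, so $\Gamma_R$ is exactly the level set $\{|x'|=\rho\}$ of $t$ on $\Sigma^1$; hence the outward conormal of $B_R(0)\cap\Sigma^1$ along $\Gamma_R$ is $\bm\eta=\nabla t/|\nabla t|$. In the induced metric $g_{ij}=\delta_{ij}+\phi'(t)^2\,x_i'x_j'/t^2$ a one-line computation gives $|\nabla t|=N(t)^{-1}$, while $\mc H^{n-1}(\Gamma_R)=\Omega_{n-1}\,\rho^{\,n-1}$ because the radial correction in $g$ is orthogonal to $\Gamma_R$. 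As $w=w(t)$, on $\Gamma_R$ we get $\langle\nabla w,\bm\eta\rangle=w'(\rho)\,N(\rho)^{-1}$, so
\[ \frac{1}{\Omega_{n-1}}\int_{\Gamma_R}\langle\nabla w,\bm\eta\rangle=w'(\rho)\,N(\rho)^{-1}\,\rho^{\,n-1}=\frac{2}{\sqrt{1-\rho^{-2(n-1)}}}\,(1+o(1))\ \longrightarrow\ 2\quad(R\to\infty), \]
which is the assertion.

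The one genuinely delicate point is the derivative estimate in the second paragraph: since the leading term of $w'$ decays like $t^{-(n-1)}$, one must check that the contributions of $N-1$, of $h''$, and — for $n=2$ — of the logarithmic growth of $w$ itself all re-enter the differentiated implicit relation as errors strictly smaller than $t^{-(n-1)}$. Granting that, the rest is routine bookkeeping with the profile $h$.
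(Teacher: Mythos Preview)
Your proof is correct and follows essentially the same route as the paper's: write the normal-graph height $w$ via an implicit relation between the two sheets, differentiate, control the error terms, and evaluate the boundary flux in the limit; the only cosmetic difference is that you parametrize by the radial coordinate $t$ and prove $w'(t)\,t^{n-1}\to 2$, whereas the paper parametrizes by the height $h=x_{n+1}$ and proves $\partial_h w\to -2$, the two being equivalent via $\partial_h t=-\sqrt{t^{2(n-1)}-1}$. One tiny slip: in your final display the factor $N(\rho)^{-1}=\sqrt{1-\rho^{-2(n-1)}}$ actually cancels the square root in the denominator, so the expression is $2+o(1)$ exactly rather than $\tfrac{2}{\sqrt{1-\rho^{-2(n-1)}}}(1+o(1))$ --- of course the limit is unaffected.
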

\begin{proof}[Proof of Proposition \ref{prop:standard nabla w:high dim}]
By computation, 
\[  \partial_h=(-\sqrt {t^{2n-2}-1} Y,1), \]
where $Y=(y_1,y_2,\cdots,y_n)$ with $|Y|=1$. Then the normal line at $(tY,h)$ intersects $\mc C$ at another point
\[ P=(t_hY,z_h),  \]
where $z_h>0$ is given by 
\begin{equation}\label{eq:normal line}
   z_h=h+(t_h-t)\sqrt {t^{2n-2}-1}  . 
\end{equation}  
   
	\begin{figure}[h]
	\begin{center}
		\def\svgwidth{0.7\columnwidth}
		
		\caption{The normal line of a catenoid.}
		\label{fig:Catenoid}
	\end{center}
        \end{figure}

Observe that $z_h>0$ and $h<0$. Then by \eqref{eq:normal line}, $t_h>t$. By  \eqref{eq:catenoid equation}, $z_h<2\log t_h<t_h$ and $|h|<2\log t<t$ for large $t$. Plugging them into \eqref{eq:normal line}, it follows that 
\[  t_h+t>z_h-h=(t_h-h)\sqrt{t^{2n-2}-1}\geq 3(t_h-t), \]
which implies that $t_h<2t$. Plugging it back into \eqref{eq:normal line} again,
\[  2\log (2t) +2\log t>z_h-h=(t_h-t)\sqrt{t^{2n-2}-1}> (t_h-t)(t-1). \]
It follows that  $t<t_h<t+1$ for large $t$. Observe that by \eqref{eq:catenoid equation},
\[ \partial_h t_h =\partial_hz_h\cdot \sqrt{t_h^{2n-2}-1}; \quad \partial_h t=-\sqrt{t^{2n-2}-1}.     \]
Differentiating both sides in \eqref{eq:normal line}, we then have
\[ \partial_ht_h=1+(\partial_ht_h+\partial_h t)\sqrt{t^{2n-2}-1}+(t_h-t)(n-1)\frac{t^{2n-3}\cdot\partial_ht}{\sqrt{t^{2n-2}-1}}.     \]
Combining them together, it follows that
\[  \partial_hz_h=\frac{t^{2n-2}-(t_h-t)(n-1)t^{2n-3}}{1-\sqrt {(t_h^{2n-2}-1)(t^{2n-2}-1)}}\to -1 \ \ \ \text{ as }\ \ \ h\to-\infty. \]
Note that 
\[ w^2=( t_h-t )^2+(z_h-h)^2. \]
Then by computation,
\begin{align*}
  \partial_hw^2&=2(t_h-t)(\sqrt{t_h^{2n-2}-1}\partial_hz_h+\sqrt{t^{2n-2}-1})+2(z_h-h)(\partial _hz_h-1)   \\
  &=  2\cdot \Big(\frac{\sqrt{t_h^{2n-2}-1}}{\sqrt{t^{2n-2}-1}}+1\Big)\cdot(z_h-h) \partial_h z_h,
  \end{align*}
  which implies that 
  \[  \lim_{h\to-\infty}\partial_h w=\lim_{h\to-\infty} -\frac{2(z_h-h)}{w} =\lim_{h\to-\infty} -2\cdot \frac{(t_h-t)\sqrt{t^{2n-2}-1}} {t^{n-1}(t_h-t)}=-2.  \]
  On the other hand,
  \[\frac{1}{\Omega_{n-1}}\int_{\Sigma\cap\{ x_{n+1}=h \}}\langle \nabla w,\bm \eta\rangle =\frac{1}{\Omega_{n-1}} \int_{\Sigma\cap\{ z=h \}} \frac{-\partial_h w}{t^{n-1}} =-\partial_h w.
\]
  This finishes the proof of Proposition \ref{prop:standard nabla w:high dim}.
	\end{proof}

\bibliographystyle{amsalpha}
\bibliography{minmax}
\end{document}